\newcommand{\ee}{\varepsilon}
\newcommand{\pa}{\partial}
\newtheorem{theorem}{Theorem}
\newtheorem{lemma}{Lemma}[section]
\theoremstyle{definition}
\newtheorem{remark}{Remark}[section]
\newtheorem{example}{Example}[section]
\newtheorem{definition}{Definition}[section]
\renewcommand{\hat}{\widehat}
\renewcommand{\tilde}{\widetilde}
\begin{document}

\date{}

\title{Homogenization of a net of periodic 
	 critically scaled boundary obstacles
	related to reverse osmosis 
	``nano-composite'' membranes}

\author[1]{Jesús Ildefonso D\'iaz\thanks{Corresponding Author: \url{jidiaz@ucm.es}}}

\affil[1]{%
	Instituto de Matem\'{a}tica Interdisciplinar, 
	Universidad Complutense de
	Madrid. \protect\\
	Plaza de Ciencias 3, 28040 (Madrid) Spain.}

\author[1,2]{David Gómez-Castro\thanks{\url{dgcastro@ucm.es}}}
\affil[2]{%
	Dpto.\ de Matem\'atica Aplicada,
	E.T.S. de Ingenier\'ia -- ICAI,
	Universidad Pontificia de Comillas.}

\author[3]{Alexander V. Podolskiy\thanks{\url{originalea@ya.ru}}}
\affil[3]{%
	Faculty of Mechanics and Mathematics, 
	Moscow State University.
	Moscow 19992, Russia.}

\author[3]{Tatiana A. Shaposhnikova\thanks{\url{shaposh.tan@mail.ru}}}

\maketitle
\begin{abstract}
	One of the main goals of this paper is to extend some of the
	mathematical techniques of some previous papers by the authors showing
	that  some very useful phenomenological properties which can be
	observed to the nano-scale can be simulated and justified
	mathematically by means of some homogenization processes when a
	certain critical scale is used in the corresponding framework. Here
	the motivating problem in consideration is formulated in the context
	of the reverse osmosis. We consider, on a part of the boundary of a
	domain $\Omega\subset \mathbb R^n$, a set of very small periodically distributed
	semipermeable membranes having an ideal infinite permeability
	coefficient (which leads to Signorini type boundary conditions) on a
	part $\Gamma_1$ of the boundary. We also assume that a possible chemical
	reaction may take place on the membranes. We obtain the rigorous
	convergence of the problems to a homogenized problem in which there is
	a change in the constitutive nonlinearities. Changes of this type are
	the reason for the big success of the nanocomposite materials. Our
	proof is carried out for membranes not necessarily of radially
	symmetric shape. The definition of the associated critical scale
	depends on the dimension of the space (and it is quite peculiar for
	the special case of $n=2$). Roughly speaking, our result proves that the
	consideration of the critical case of the scale leads to an
	homogenized formulation which is equivalent to have a global
	semipermeable membrane, at the whole part of the boundary $\Gamma_1$, with a
	``finite permeability coefficient of this virtual membrane'' which is
	the best we can get, even if the original problem involves a set of membranes of any arbitrary finite
	permeability coefficients. 
	
	\medskip
	\noindent \textbf{keywords}  homogenization, critical scale, reverse osmosis, Signorini boundary 
	conditions, elliptic partial differential equations, strange term \\
	\noindent\textbf{MSC (2010)} 35B27, 76M50, 35M86, 35J87
\end{abstract} 

\section{Introduction and statement of results}

We present new results concerning the asymptotic behavior, as $\varepsilon
\rightarrow 0,$ of the solution $u_{\varepsilon }$ of \ a family of \
boundary value problems formulated in a cavity (or plant) represented by a
bounded domain $\Omega \subset \mathbb{R}^{n}$, in which a linear diffusion
equation is satisfied. The boundary $\partial \Omega$ is split into two regions. On one the regions, homogeneous Dirichlet conditions are specified. On the other one, some small subsets $G_{\ee}$ is $\ee$-periodically distributed, and
some unilateral boundary condition are specified on them. We also
assume that a possible ``reaction'' may
take place on a net $G_{\varepsilon }$ of small pieces of the boundary given by the periodic repetition of a rescaled particle $G_{0}$.

There are several relevant problems in a wide spectrum of applications
leading to such type of formulations, ranging from water and wastewater
treatment, to food and textile engineering, as well as pharmaceutical and
biotechnology applications (for a recent review see \cite{Mohammad}). One of
them concerns the reverse osmosis when we apply it, for instance, to
desalination processes (see, e.g. \cite{Jamal} and its references). Without
intending to use here a \textquotedblleft realistic model\textquotedblright\ we
shall present an over-simplified formulation that, nonetheless, preserves most of
the mathematical difficulties concerning the passing to the limit as $%
\varepsilon \rightarrow 0$. Some examples of more complex formulations,
covering different aspects of the problems considered here can be found, for
instance, in \cite{GanNeussKnabner} and its many references.\\

We start by recalling that, roughly speaking, semipermeable membranes allow
the passing of certain type of molecules (the so called as
``solvents'') but block other type of molecules (the
``solutes''). The solvents flow from the region of smaller
concentration of solute to the region of higher concentration (the difference of concentration produces the
phenomenon known as \textit{osmotic pressure}). Nevertheless, by creating a
very high pressure it is possible to produce an inverse flow, such as it is
used in desalination plants: it is the so called ``reverse
osmosis''. Since in many cases the semipermeable membrane contains some
chemical products (e.g. polyamides; see \cite{Ghos}), our formulation will
contain also a nonlinear kinetic reaction term in the flux given by a
continuous nondecreasing function $\sigma (s)$. Let us call $w_{\varepsilon
} $ the solvent concentration corresponding to the membrane periodicity
scale $\varepsilon .$ Let us modulate the intensity of the reaction in terms
of a factor $\varepsilon ^{-k},$ with $k$ to be analyzed later. So, for a
critical value of the solvent concentration $\psi $ (associated to the
osmotic pressure) the flux (including the reaction kinetic term) is an
incoming flux with respect to the solvents plant $\Omega $ if the
concentration of the solvent molecules $w(x)$ on the semipermeable membrane $%
G_{\varepsilon }\subset \partial \Omega $ is smaller or equal to this
critical value, but it remains isolated (with no boundary flow, excluding
the reaction term, on the membrane, i.e. when the concentration is $%
w(x)<\psi $)$.$ So, if $\nu $ is the exterior unit normal vector to the
membrane surface we have
\begin{equation*}
\begin{array}{lc}
\text{on }\{x\in G_{\varepsilon }\subset \partial \Omega \text{, }%
w_{\varepsilon }(x)>\psi \} & \partial _{\nu }w_{\varepsilon }+\varepsilon
^{-k}\sigma (\psi -w_{\varepsilon })=0, \\
\text{on }\{x\in G_{\varepsilon }\text{, }w_{\varepsilon }(x)\leq \psi \} &
\partial _{\nu }w_{\varepsilon }+\varepsilon ^{-k}\sigma (\psi
-w_{\varepsilon })=-\varepsilon ^{-k}\mu (\psi -w_{\varepsilon })%
\end{array}%
\end{equation*}%
for some parameter $\mu >0$ called as the \textquotedblleft finite
permeability coefficient of the membrane" (usually, in practice, $\mu $
takes big values). We assume a simplified linear diffusion equation on the
solvent concentration
\begin{equation*}
-\Delta w=F\text{ in }\Omega ,
\end{equation*}%
and some boundary conditions on the rest of the boundary $\partial \Omega $.
For instance, we can distinguish some subregions where Dirichlet or Neumann
types of boundary conditions hold, and so, if we introduce the partition $%
\partial \Omega =\Gamma _{1}\cup \Gamma _{2}$ and assume that, in fact, $%
G_{\varepsilon }\subset \Gamma _{1},$ then we can imagine that
\begin{equation*}
\partial _{\nu }w_{\varepsilon }(x)=h(x)\text{ on }x\in \Gamma _{1}\diagdown
\overline{G_{\varepsilon }},
\end{equation*}%
and
\begin{equation*}
w_{\varepsilon }(x)=g(x)\text{ on }x\in \Gamma _{2}.
\end{equation*}
Figure \ref{fig:reverse osmosis} presents a simplified case of the above mentioned
framework.

\begin{figure}[h]
	\centering 
	\includegraphics[width=.7\textwidth]{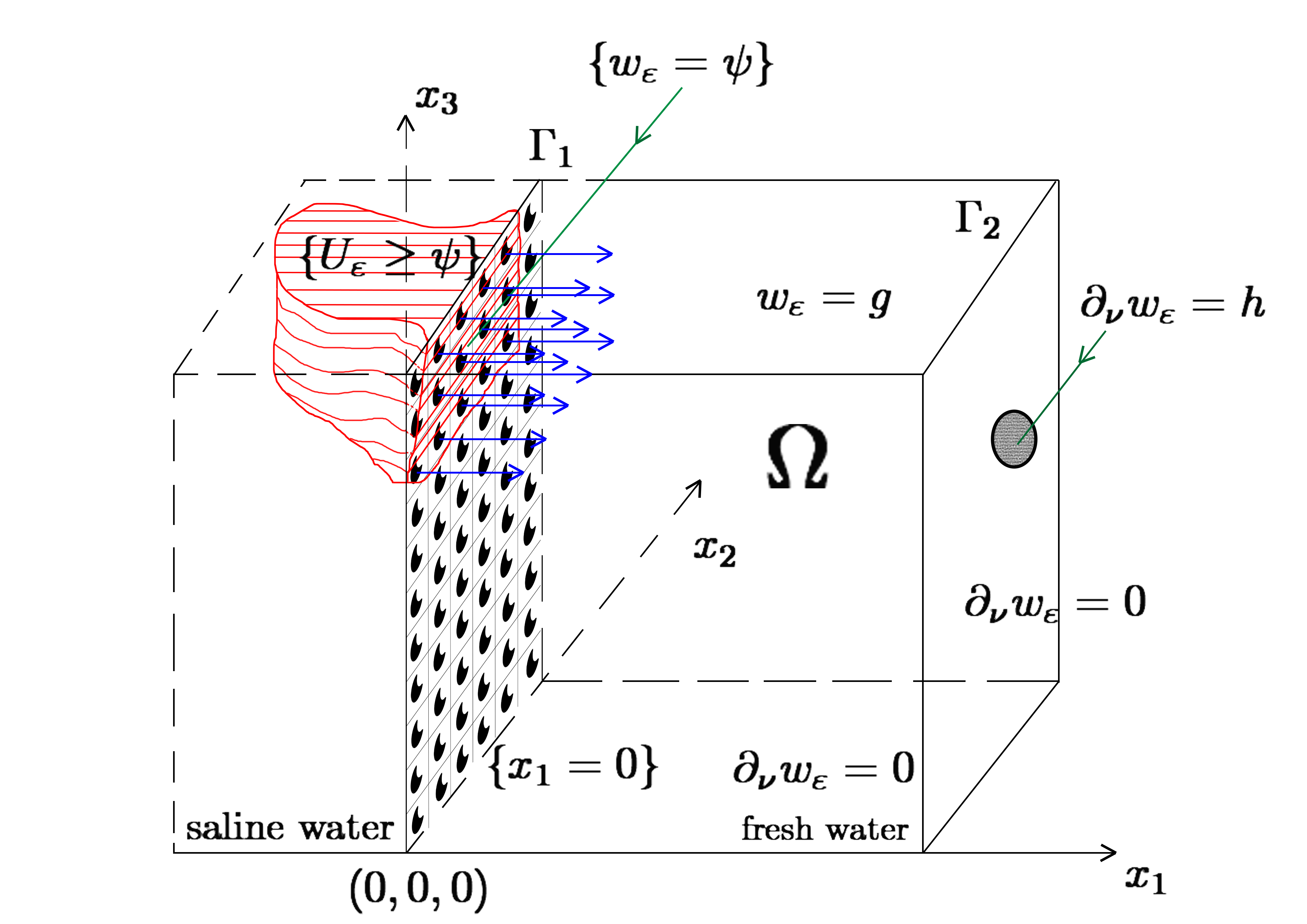}
	\caption{A simple illustration of a plant with a reverse osmosis membrane}
	\label{fig:reverse osmosis}
\end{figure}

We are specially interested in the study of new behaviours arising in the
reverse osmosis membranes having a periodicity $\varepsilon $ of the order
of nanometers (see e.g. \cite{Chae} and its references). Mathematically,
we shall give a sense to those extremely small scales by asking that the
diameter of these subsets included in $G_{\varepsilon }$ let of order $%
a_{\varepsilon },$ where $a_{\varepsilon }\ll \varepsilon $.

Some times, it is interesting to consider semipermeable membranes with an
\textquotedblleft infinite permeability coefficient\textquotedblright\
(formally $\mu =+\infty $, but only for the case $w_{\varepsilon }(x)=\psi $%
) and thus $\psi $ becomes an obstacle which is periodically repeated in $%
G_{\varepsilon }$. Following the approach presented in \cite{Duvaut-Lions},
this can be formulated as%
\begin{equation*}
\begin{array}{l}
\text{on }\{x\in G_{\varepsilon }\text{, }w_{\varepsilon }(x)>\psi \}\text{ }%
\Longrightarrow \text{\ }\partial _{\nu }w_{\varepsilon }+\varepsilon
^{-k}\sigma (\psi -w_{\varepsilon })=0, \\
\text{on }\{x\in G_{\varepsilon }\text{, }%
w_{\varepsilon }(x)=\psi \}\Longrightarrow \partial _{\nu }w_{\varepsilon
}+\varepsilon ^{-k}\sigma (\psi -w_{\varepsilon })\leq 0, \\
\text{and }(w_{\varepsilon }-\psi )(\partial _{\nu }w_{\varepsilon
}+\varepsilon ^{-k}\sigma (\psi -w_{\varepsilon })=0)\text{ on }%
G_{\varepsilon }.%
\end{array}%
\end{equation*}

Now, to carry out our mathematical treatment, it is quite convenient to work
with the new unknown
\begin{equation*}
u_{\varepsilon }(x):=\psi -w_{\varepsilon }(x)
\end{equation*}%
and thus, if we assume (again for simplicity) that $h=g=0$ and $f:=-F$ we
simplify the formulation to arrive at the following formulation which will be the object of study in
this paper:

\begin{equation}
\left\{
\begin{array}{ll}
-\Delta u_{\varepsilon }=f(x), & x\in \Omega , \\
u_{\varepsilon }\geq 0, &  \\
\partial _{\nu }u_{\varepsilon }+\varepsilon ^{-k}\sigma (u_{\varepsilon
})\geq 0, &  \\
u_{\varepsilon }(\partial _{\nu }u_{\varepsilon }+\varepsilon ^{-k}\sigma
(u_{\varepsilon }))=0, & x\in G_{\varepsilon }, \\
\partial _{\nu }u_{\varepsilon }=0, & x\in \Gamma _{1}\setminus \overline{G}%
_{\varepsilon }, \\
u_{\varepsilon }=0, & x\in \Gamma _{2}.%
\end{array}%
\right.  \label{epsilon-problem}
\end{equation}

Notice that in the reaction kinetics we made emerge a re-scaling factor $%
\beta (\varepsilon ):=\varepsilon ^{-k}$ where $k\in \mathbb{R}$. The
relation between the exponent $k$ and the diameter of the chemical particles
(which we shall assume to be given by $a_{\varepsilon }=C_{0}\varepsilon
^{\alpha }$, where $C_{0}>0$ and $\alpha >1$) will be discussed later. This
relation will depend on the dimension of the space $n\geq 3.$ The case $n=2$
is rather special and will require a different treatment: we shall assume
that $a_{\varepsilon }=C_{0}\varepsilon e^{-\frac{\alpha ^{2}}{\varepsilon }%
} $ and $\beta (\varepsilon )=e^{\frac{\alpha ^{2}}{\varepsilon }}$.

Homogenization results for boundary value problems with alternating type of
boundary conditions, including Robin type condition, were widely considered
in the literature. We refer, for instance to the papers \cite{ZuSh07,
Damlamian, Chech93, Borisov} which already contain an extensive bibliography
on the subject. Huge attention was drawn to the similar homogenization
problems but in a domains perforated by the tiny sets on which some
nonlinear Robin type condition is specified on their boundaries. Some
pioneering works in this direction are the papers by Kaizu \cite{Kaizu89,
Kaizu91}. In this works where investigated all the possible relations
between parameters except one the case of the \textquotedblleft
critical\textquotedblright\ relation between parameters $\alpha $ and $\beta
(\varepsilon )$, i.e. $\alpha =k=n/(n-2)$. Later on, this critical case was
considered in \cite{Gon97} for $n=3$ and for the sets $G_{\varepsilon }$
given by balls. It seems that it was in the paper \cite{Gon97} where the
effect of \textquotedblleft nonlinearity change due to the homogenization
process\textquotedblright\ was discovered for the first time. After that, by
using some different method of proof, the critical case was solved for $%
n\geq 3$ in \cite{ZuShDiffEq}. The consideration of the case $n=2$ and for an
arbitrary shape domains $G_{\varepsilon }$ was carried out in \cite{PeShZu14}%
. More recently, many results concerning the asymptotic behavior of
solutions of problems similar to (\ref{epsilon-problem}) were published in
the literature \cite{ZuShDiffEq, JaNeSh, GoLoPePoSh17, GoLoPeSh11,
DiGoPoSh16, DiGoPoSh17, DiGoPoSh Nonlin Anal, DiGoPoSh Math Anal}.
Nevertheless, in all the above mentioned works the particles (or
perforations, according to the physical model used as motivation of the
mathematical formulation) subsets $G_{\varepsilon }$ where assumed to be
balls (having a critical radius). We also mention here the paper \cite
{DiGoShZu Aux Shape} that describes the asymptotic behavior of some related
problem for the case of arbitrary shape sets $G_{\varepsilon }$ and for $n\ge3$%
. One of the main goals of this paper is to extend some of the techniques of
\cite{DiGoShZu Aux Shape} to the problem (\ref{epsilon-problem}) where the
periodically distributed reactions arise merely on some part of the global
boundary $\partial \Omega $ always for the critical scaled case. This is the
case for which some phenomenological properties which arise to the
nano-scale can be simulated and justified by means of homogenization
processes.

\paragraph{Case $n \ge 3$}

In order to present the main results of this paper, and their application to
the reverse osmosis framework, we need to introduce some auxiliary
notations. We start by considering the case $n\geq 3.$ We 
assume that $\Omega $ that it is bounded domain in $\mathbb{R}%
^{n}\cap \{x_{1}>0\}$, $n\geq 3$, with a piecewise-smooth boundary $\partial
\Omega $ that consists of two parts $\Gamma _{1}$ and $\Gamma _{2}$, with
the property that
\begin{equation*}
\Gamma _{1}=\partial \Omega \cap \{x\in \mathbb{R}^{n}:x_{1}=0\}\neq
\emptyset .
\end{equation*}%
We consider a model $G_{0}$ such that $\overline{G_{0}}\subset \{x\in
\mathbb{R}^{n}:x_{1}=0,|x|<1/4\}$ with $\overline{G_{0}}$ diffeomorphic to a
ball. We define $\delta B=\{x:\delta ^{-1}x\in B\}$, $\delta >0$. Let
\begin{equation*}
\widetilde{G_{\varepsilon }}=\bigcup\limits_{j\in \mathbb{Z}^{\prime
}}(a_{\varepsilon }G_{0}+\varepsilon j)=\bigcup\limits_{j\in \mathbb{Z}%
^{\prime }}G_{\varepsilon }^{j},
\end{equation*}%
where
\begin{equation*}
\mathbb{Z}^{\prime }=\{0\}\times \mathbb{Z}^{n-1},
\end{equation*}%
and
\begin{equation}
a_{\varepsilon }=C_{0}\varepsilon ^{k},\qquad k=\frac{n-1}{n-2}\qquad \text{%
and}\qquad C_{0}>0.  \label{assumption a}
\end{equation}%
A justification of the above choice of exponent $k$ can be found, for
instance, in \cite{LoPesuSha 2011} (see also \cite{PeSha}). We define the
net of sets $G_{\varepsilon }$ as the union of sets $G_{\varepsilon
}^{j}\subset \widetilde{G_{\varepsilon }}$ such that $\overline{%
G_{\varepsilon }^{j}}\subset \Gamma _{1}$ and $\rho (\partial \Gamma _{1},%
\overline{G_{\varepsilon }^{j}})\geq 2\varepsilon $, i.e.
\begin{equation*}
G_{\varepsilon }=\bigcup\limits_{{j}\in \Upsilon _{\varepsilon
}}G_{\varepsilon }^{j},
\end{equation*}%
where
\begin{equation*}
\Upsilon _{\varepsilon }=\{j\in \mathbb{Z}^{\prime }:\rho (\partial \Omega ,%
\overline{G_{\varepsilon }^{j}})\geq 2\varepsilon \}.
\end{equation*}%
\begin{figure}[H]
	\centering 
	\includegraphics[width=.4\textwidth]{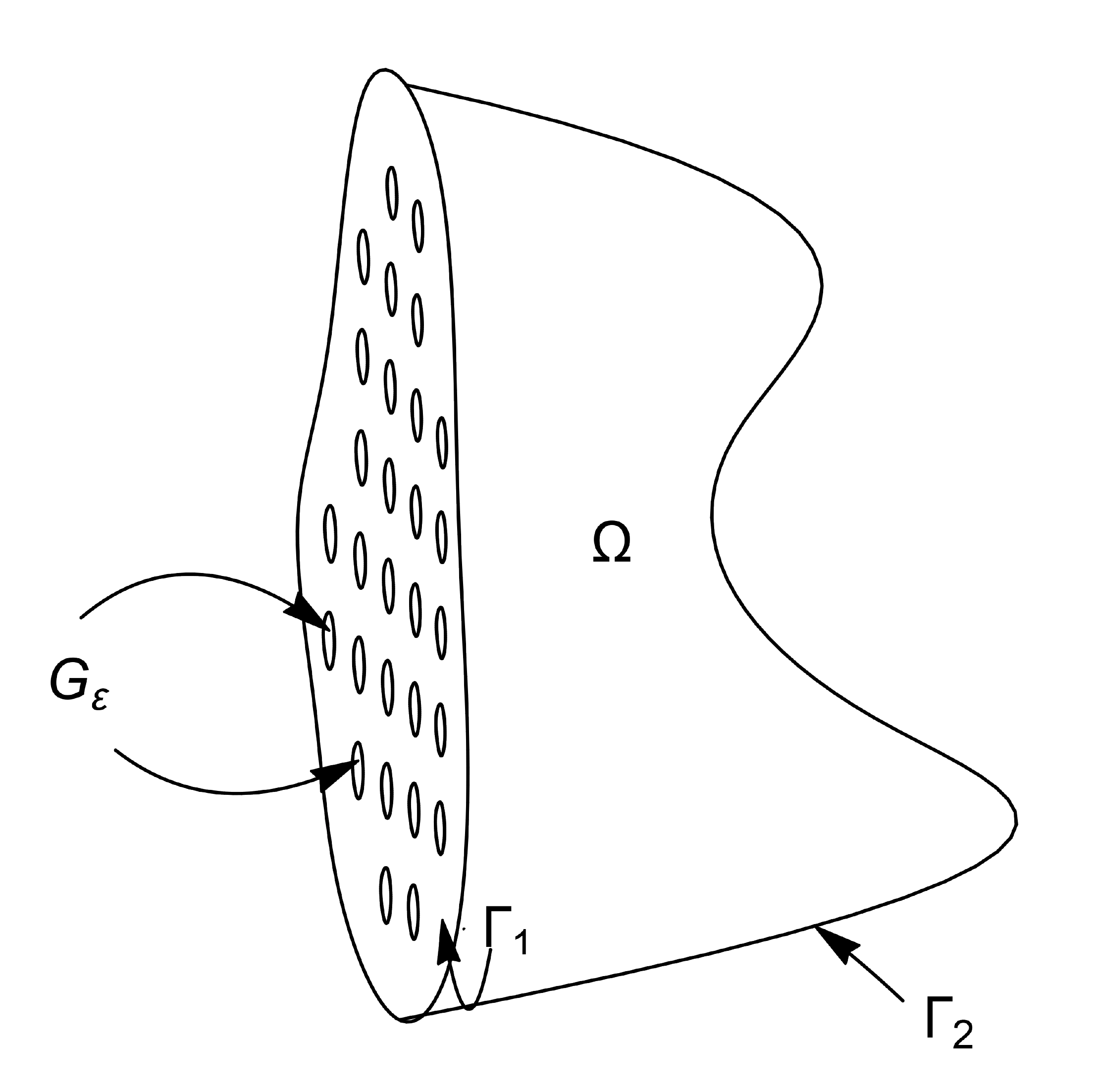}
	\caption{Domain $\Omega$ when $n \ge 3$}
\end{figure}
Notice that $|\Upsilon _{\varepsilon }|\cong d\varepsilon ^{1-n}$, $%
d=const>0 $. It will be useful later to observe that if we denote by $%
T_{r}(x_{0})$ the ball in $\mathbb{R}^{n}$ of radius $r$ centered at a point
$x_{0}$, and if we define the boundary points
\begin{equation*}
P_{\varepsilon }^{j}=\varepsilon j=(0,P_{\varepsilon ,2}^{j},\dots
,P_{\varepsilon ,n}^{j})\qquad \text{ for }j\in \mathbb{Z}^{\prime },
\end{equation*}%
and the set $T_{\varepsilon /4}^{j}=T_{\varepsilon /4}(P_{\varepsilon }^{j})$%
, then we have $\overline{G_{\varepsilon }^{j}}\subset T_{\varepsilon
/4}^{j}.$ 
In this geometrical setting, the so called strong
formulation\textquotedblright\ of the problem for which we want to study the
asymptotic behavior of its solutions is the following:
\begin{equation}
\left\{
\begin{array}{ll}
-\Delta u_{\varepsilon }=f(x), & x\in \Omega , \\
u_{\varepsilon }\geq 0, &  \\
\partial _{\nu }u_{\varepsilon }+\varepsilon ^{-k}\sigma (u_{\varepsilon
})\geq 0, &  \\
u_{\varepsilon }(\partial _{\nu }u_{\varepsilon }+\varepsilon ^{-k}\sigma
(u_{\varepsilon }))=0, & x\in G_{\varepsilon }, \\
\partial _{\nu }u_{\varepsilon }=0, & x\in \Gamma _{1}\setminus \overline{G}%
_{\varepsilon }, \\
u_{\varepsilon }=0, & x\in \Gamma _{2},%
\end{array}%
\right.  \label{original problem}
\end{equation}%
where $\sigma :\mathbb{R}\rightarrow \mathbb{R}$ is a locally H\"{o}lder
continuous non-decreasing function and, at most, super-linear at the
infinity: i.e., such that
\begin{equation}
k_{1}|s-t|\leq |\sigma (t)-\sigma (s)|\leq K_{1}|t-s|^{\rho
_{1}}+K_{2}|s-t|^{\rho _{2}},\text{ for some }\rho _{1},\rho _{2}\in \left(
0,2\right] ,  \label{eq:sigma regularity}
\end{equation}%
for all  $t, s \ge 0$ where $k_1, K_1, K_2 > 0$, $\sigma (0)=0$. In problem
(\ref{original problem}) $\nu =(-1,0,\cdots ,0)$ is the unit outward normal
vector {$\Omega $ at $\{x_{1}=0\}$} and $\partial _{\nu }u={-\frac{\partial u%
}{\partial x_{1}}}$ is the normal derivative of $u$ at this part of the
boundary.
\begin{example}
Notice that examples of such functions cover 
\begin{equation}
	\sigma(s) = \sqrt{s}. 
\end{equation}	
Furthermore, the behaviour at infinity may be superlinear as, for example, in
\begin{equation}
\sigma(s) = \begin{dcases} \sqrt{s} & 0 \le s \le s_0, \\ \sqrt{s_0} +
(s-s_0)^2 & s > s_0. \end{dcases}
\end{equation}
\end{example}

The weak formulation of \eqref{original problem} is the following (see, e.g.
\cite{Duvaut-Lions}): 

\begin{definition}
	We say that $u_\ee$ is a weak solution of \eqref{epsilon-problem} if
	\begin{equation}
	u_{\varepsilon }\in K_{\varepsilon }=\left\{ g\in H^{1}(\Omega ,\Gamma
	_{2}):\,g\geq 0\mbox{ a.e. on }G_{\varepsilon }\right\}  \label{basic set}
	\end{equation}%
	and 
	\begin{equation}
	\int\limits_{\Omega }\nabla u_{\varepsilon }\nabla (\varphi -u_{\varepsilon
	})dx+\varepsilon ^{-k}\int\limits_{G_{\varepsilon }}\sigma (u_{\varepsilon
	})(\varphi -u_{\varepsilon })dx^{\prime }\geq \int\limits_{\Omega }f(\varphi
	-u_{\varepsilon })dx,  \label{integral identity}
	\end{equation}%
	for all $\varphi \in K_{\varepsilon }$. 
\end{definition}
By $H^{1}(\Omega ,\Gamma _{2})$ we
denote the closure in $H^{1}(\Omega )$ of the set of infinitely
differentiable functions in $\overline{\Omega }$, vanishing on the boundary $%
\Gamma _{2}$.\newline

{It is} well known (see, e.g. some references in \cite{DiGoPoSh Nonlin Anal}%
) that problem \eqref{integral identity} has a unique weak solution $%
u_{\varepsilon }\in {K_{\varepsilon }}$. From \eqref{integral identity}, we
immediately deduce that {\
\begin{equation}
\Vert \nabla u_{\varepsilon }\Vert _{L_{2}(\Omega )}\leq K
\label{u epsilon estim}
\end{equation}%
}where, here and below, constant $K$ is independent of $\varepsilon $.
Hence, there exists a subsequence (denoted as the original sequence by $%
\tilde{u}_{\varepsilon }$) such that, as $\varepsilon \rightarrow 0$, we
have
\begin{align}
u_{\varepsilon }& \rightharpoonup u_{0}\mbox{ weakly in }H^{1}(\Omega
,\Gamma _{2}),  \notag \\
u_{\varepsilon }& \rightarrow u_{0}\mbox{ strongly in }L_{2}(\Omega ).
\label{u0 def}
\end{align}

By using the monotonicity of function $\sigma (u)$ one can show (see some
references in \cite{DiGoPoSh Nonlin Anal}) that $u_{\varepsilon }$ satisfies
the following \textquotedblleft very weak formulation\textquotedblright\
\begin{equation}
\int\limits_{\Omega }\nabla \varphi \nabla (\varphi -u_{\varepsilon
})dx+\varepsilon ^{-k}\int\limits_{G_{\varepsilon }}\sigma (\varphi
)(\varphi -u_{\varepsilon })dx^{\prime }\geq \int\limits_{\Omega }f(\varphi
-u_{\varepsilon })dx,  \label{integral inequality}
\end{equation}%
where $\varphi $ is an arbitrary function from $K_{\varepsilon }$.

\bigskip

The main goal of this paper is to consider this critical relation between parameters.
This scale is characterized by the fact that the resulting homogenized problem will contain a so-called ``strange term''
expressing the fact that the character of some nonlinearity arising in the
homogenized problem differs from the original nonlinearity appearing in the
boundary condition of (\ref{original problem}).
Still focusing first on the case $n \ge 3$, it can be shown that critical scale of the size of the holes is given by 
\begin{equation} 
	\alpha = \frac{n-1}{n-2}
\end{equation} (see, e.g., the arguments used
in \cite{LoPesuSha 2011} and \cite{DiGoPoSh Nonlin Anal}). 
The appropriate scaling of the reaction term so that both the diffusive and nonlinear characters are preserved at the limit the limit is, as usual, driven $\ee^{k} \sim |G_\ee|$. Therefore
\begin{equation}
	k = \frac{n-1}{n-2}.
\end{equation}
In the present paper we
construct homogenized problem with a nonlinear Robin type boundary
condition, that contains a new nonlinear term, and prove the corresponding
theorem stating that the solution of the original problem converges as $%
\varepsilon \rightarrow 0$ to the solution of the homogenized problem.\\

We point out that the main difficulties to get an homogenized problem
associated to \eqref{original problem} come from the following different
aspects:
\begin{enumerate} [i)]
	\item  the low differentiability assumed on function $\sigma $ (since
it is non-Lipschitz continuous at $u=0$ and it has quadratic growth at
infinity),
	\item the unilateral formulation of the boundary conditions on $%
G_{\varepsilon }$, 
	\item the general shape assumed on the sets $%
G_{\varepsilon }$, and,
	\item the critical scale of the sets $G_{\varepsilon }^j.$ 
\end{enumerate}
Some of those difficulties where
already in the previous short presentation paper by the authors (\cite
{DiGoPoSh Doklady 2018}) but only for $n=2$, without ii) and by assuming $%
\sigma $ Lipschitz continuous.
  \ Our main goal is to extend our techniques
to the above more general mentioned framework.\\

To build the homogenized problem we still need to introduce some
``capacity type'' auxiliary problems. Given $u\in
\mathbb{R}$, for $y\in (\mathbb{R}^{n})^{+}=\mathbb{R}^{n}\cap \{y_{1}>0\}$%
{,} we introduce the new auxiliary function $\hat{w}(y;u)$, depending also
on $G_{0}$ and $\sigma ,$ as the (unique) solution of the exterior problem
\begin{equation}
\left\{
\begin{array}{ll}
-\Delta \hat{w}=0 & y\in {(\mathbb{R}^{n})^{+}}, \\
\partial _{\nu }\hat{w}-C_{0}\sigma (u-\hat{w})=0 & y\in G_{0}, \\
{\partial _{\nu }\hat{w}=0} & {y\notin G_{0},y_{1}=0,} \\
\hat{w}\rightarrow 0 & \mbox{as }|y|\rightarrow \infty .%
\end{array}%
\right.  \label{hat w prob}
\end{equation}%
Remember that $C_{0}>0$ was given in the structural assumption (\ref%
{assumption a}). The existence and uniqueness of $\hat{w}(y;u)$ is given in
Lemma \ref{existence hat w} below. Let us introduce also the auxiliary
function $\hat{\kappa}(y),y\in {(\mathbb{R}^{n})^{+},}$ as the unique
solution of the problem
\begin{equation}
\begin{dcases}
\Delta \hat{\kappa}=0 & y\in (\mathbb{R}^{n})^{+}, \\
\hat{\kappa}=1 & y\in G_{0}, \\
{\partial _{\nu }\hat{\kappa}=0} & y\notin G_{0},y_{1}=0, \\
\hat{\kappa}\rightarrow 0 & \mbox{as }|y|\rightarrow \infty .%
\end{dcases}  \label{hat kappa prob}
\end{equation}%
We then define the possibly nonlinear function, for $u\in \mathbb{R}$,
\begin{equation}
H_{G_{0}}(u):=\int\limits_{G_{0}}\partial _{\nu }\hat{w}(u,y^{\prime
})dy^{\prime }=C_{0}\int\limits_{G_{0}}\sigma (u-\hat{w}(y^{\prime
};u))dy^{\prime },  \label{H definition}
\end{equation}%
and the scalar
\begin{equation}
\lambda _{G_{0}}:=\int_{G_{0}}\partial _{\nu }\hat{\kappa}(y^{\prime
})\,dy^{\prime },  \label{eq:lambda defn}
\end{equation}%
where in both definitions $y^{\prime }=(0,y_{2},\cdots ,y_{n})$. Notice that
$\hat{\kappa}(y)$ can be extended by symmetry to $\mathbb{R}^{n}\setminus
\overline{G}_{0}$ as an harmonic function. Moreover, by the maximum
principle, $\hat{\kappa}(y)$ reaches its maximum in $G_{0}$, and so by the
strong maximum principle $\partial _{\nu }\hat{\kappa}=-\partial _{x_{1}}%
\hat{\kappa}>0$. Then, we know that
\begin{equation*}
\lambda _{G_{0}}>0.
\end{equation*}%
Some properties of the function $H_{G_{0}}$ will be presented later. In
particular, in Lemma \ref{lem:H Lipschitz} below we will show that $%
H_{G_{0}} $ is always a Lipschitz continuous function (even if $\sigma $ is
merely H\"{o}lder continuous) such that
\begin{equation}
0\leq H_{G_{0}}^{\prime }\leq \lambda _{G_{0}}.  \label{eq:H prime bounds}
\end{equation}%
The following theorem gives a description of the limiting function $u_{0}$
obtained in \eqref{u0 def}.

\begin{theorem}
\label{thm:1} Let $n\geq 3$, $\alpha =k=\frac{n-1}{n-2}$ and $u_{\varepsilon
}$ be a weak solution of the problem \eqref{original problem}. Then function
$u_{0}$ defined in \eqref{u0 def} is a weak solution of the following
problem
\begin{equation}
\left\{
\begin{array}{ll}
-\Delta u_{0}=f, & x\in \Omega , \\
\partial _{\nu }u_{0}+C_{0}^{n-2}H_{G_{0}}(u_{0,+})-\lambda
_{G_{0}}C_{0}^{n-2}u_{0,-}^{{}}=0, & x\in \Gamma _{1}, \\
u_{0}=0, & x\in \Gamma _{2},%
\end{array}%
\right.  \label{homogenized}
\end{equation}%
where $H_{G_{0}}$ is defined by \eqref{H definition} and $\lambda _{G_{0}}$
by \eqref{eq:lambda defn}. Here, as usual $u_{0,+}:=\max \{u_{0},0\}$ and $%
u_{0,-}^{{}}=\max \{-u_{0},0\},$ so that $u_{0}=u_{0,+}-u_{0,-}^{{}}.$%
\end{theorem}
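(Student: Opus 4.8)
The plan is to pass to the limit in the very weak formulation \eqref{integral inequality} using a suitable family of oscillating test functions built from the capacity-type solutions $\hat w$ and $\hat\kappa$. First I would fix an arbitrary $\varphi\in H^1(\Omega,\Gamma_2)\cap C^\infty(\overline\Omega)$ (the eventual limit problem has no unilateral constraint, so $\varphi$ is free of sign) and construct a test function $W_\varepsilon$ adapted to $\varphi$ on each cell: roughly
\begin{equation*}
W_\varepsilon(x)=\varphi(x)-\sum_{j\in\Upsilon_\varepsilon}\Big(\hat w\big(\tfrac{x-P_\varepsilon^j}{a_\varepsilon};\varphi(P_\varepsilon^j)\big)-\text{(correction)}\Big)\chi_j(x),
\end{equation*}
where $\chi_j$ is a cutoff supported in $T_{\varepsilon/4}^j$ and the rescaling by $a_\varepsilon=C_0\varepsilon^{k}$ is exactly the critical one. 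I would split $\varphi$ according to its sign: on the part where $\varphi>0$ the obstacle is inactive and the $\hat w$-profile carries the nonlinear reaction, producing in the limit the term $C_0^{n-2}H_{G_0}(\varphi_+)$; on the part where $\varphi\le 0$ the obstacle constraint is saturated and one instead uses the $\hat\kappa$-profile multiplied by $\varphi_-$, producing the linear term $\lambda_{G_0}C_0^{n-2}\varphi_-$. The factor $C_0^{n-2}$ appears from the change of variables: $|G_\varepsilon^j|\sim a_\varepsilon^{n-1}$, the number of cells is $\sim \varepsilon^{1-n}$, and after accounting for the $\varepsilon^{-k}$ weight and the $a_\varepsilon/\varepsilon$ scaling one is left with the stated power of $C_0$.

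Next I would establish the auxiliary estimates on the correctors. The key quantitative facts are: $\|\nabla(W_\varepsilon-\varphi)\|_{L^2(\Omega)}\to 0$ (so $W_\varepsilon\rightharpoonup\varphi$ in $H^1$ and the energy term $\int_\Omega\nabla W_\varepsilon\nabla(W_\varepsilon-u_\varepsilon)$ behaves like $\int_\Omega\nabla\varphi\nabla(\varphi-u_0)$ plus a boundary contribution coming from the non-decaying flux of $\hat w$ and $\hat\kappa$ across $\Gamma_1$); that $W_\varepsilon\in K_\varepsilon$, i.e. $W_\varepsilon\ge0$ on $G_\varepsilon$ — this holds on the $\{\varphi>0\}$ part because $0\le\hat w\le u$ (a maximum-principle bound proved alongside Lemma~\ref{existence hat w}), and on the $\{\varphi\le0\}$ part because there $W_\varepsilon$ equals $\varphi+(-\varphi)\hat\kappa=\varphi(1-\hat\kappa)+0$ on $G_0$ where $\hat\kappa=1$, hence $W_\varepsilon=0\ge0$ there; and that the surface integral $\varepsilon^{-k}\int_{G_\varepsilon}\sigma(W_\varepsilon)(W_\varepsilon-u_\varepsilon)\,dx'$ converges to $\int_{\Gamma_1}C_0^{n-2}H_{G_0}(\varphi_+)(\varphi-u_0)\,dx'$. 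These all reduce, after rescaling a single cell, to the definitions \eqref{H definition}, \eqref{eq:lambda defn} and the decay $\hat w,\hat\kappa\to0$ at infinity; the usual device is a mesoscopic annulus $a_\varepsilon\ll r\ll\varepsilon$ on which the correctors are already small, so the cutoff costs nothing in the limit.

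Collecting these limits in \eqref{integral inequality} with $\varphi$ replaced by $W_\varepsilon$, and using the strong $L^2$ convergence $u_\varepsilon\to u_0$ together with weak lower semicontinuity of the Dirichlet energy, I obtain
\begin{equation*}
\int_\Omega\nabla\varphi\nabla(\varphi-u_0)\,dx+\int_{\Gamma_1}\Big(C_0^{n-2}H_{G_0}(\varphi_+)-\lambda_{G_0}C_0^{n-2}\varphi_-\Big)(\varphi-u_0)\,dx'\ge\int_\Omega f(\varphi-u_0)\,dx
\end{equation*}
for all smooth $\varphi$, hence by density for all $\varphi\in H^1(\Omega,\Gamma_2)$. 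This is the ``very weak'' (Minty-type) form of \eqref{homogenized}; since $u\mapsto C_0^{n-2}H_{G_0}(u_+)-\lambda_{G_0}C_0^{n-2}u_-$ is continuous, monotone (Lipschitz, by Lemma~\ref{lem:H Lipschitz} and \eqref{eq:H prime bounds}, with the two branches matching at $0$ because $H_{G_0}(0)=0$), a standard Minty argument — replace $\varphi$ by $u_0+t(\phi-u_0)$, $t\downarrow0$ — upgrades it to the genuine weak formulation, which also yields uniqueness of $u_0$ and therefore convergence of the whole sequence, not just a subsequence.

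I expect the main obstacle to be the two-regime construction of the corrector and, in particular, handling the boundary term where $\varphi$ changes sign: one must glue the $\hat w$-profile and the $\hat\kappa\cdot\varphi_-$-profile across $\{\varphi=0\}$ without losing control of the $H^1$ norm, and one must justify that the contribution of the cells straddling $\{\varphi=0\}$ is negligible. A related technical point is that $\sigma$ is only Hölder at $0$ with merely quadratic growth at infinity (difficulty i)), so the convergence of the surface integral cannot use Lipschitz bounds on $\sigma$ directly; instead one exploits that $H_{G_0}$ — the \emph{cell-averaged} flux — is globally Lipschitz, which is precisely the content of Lemma~\ref{lem:H Lipschitz}, and one estimates the discrepancy $\sigma(W_\varepsilon)-\sigma(\varphi(P_\varepsilon^j)-\hat w)$ by an interpolation/oscillation argument using the continuity of $\varphi$ and the smallness of $\nabla(W_\varepsilon-\varphi)$. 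The general (non-radial) shape of $G_0$ (difficulty iii)) enters only through the well-posedness and decay of \eqref{hat w prob}–\eqref{hat kappa prob}, which are taken as given.
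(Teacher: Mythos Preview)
Your overall strategy --- build an oscillating test function from the cell profiles $\hat w$ and $\hat\kappa$, plug it into the very weak formulation \eqref{integral inequality}, and finish with a Minty argument --- is the same as the paper's. But there is a genuine error in your mechanism for producing the strange term. You assert that $\|\nabla(W_\varepsilon-\varphi)\|_{L^2(\Omega)}\to 0$. In the critical scaling $a_\varepsilon=C_0\varepsilon^{(n-1)/(n-2)}$ this is \emph{false}: a change of variables gives $\sum_j\|\nabla\hat w_\varepsilon^j\|_{L^2}^2\sim a_\varepsilon^{\,n-2}\,|\Upsilon_\varepsilon|\sim C_0^{n-2}$, so the corrector converges to $0$ only \emph{weakly} in $H^1$ (this is exactly \eqref{W conv}). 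Your sentence is also internally inconsistent: if the corrector gradient went to $0$ strongly, there would be no ``boundary contribution from the non-decaying flux''. In the paper the strange term is produced precisely by the $O(1)$ energy of the corrector: one integrates the cross term $\int_\Omega\nabla\mathcal W_\varepsilon\cdot\nabla(\cdots)$ by parts (see the decompositions \eqref{J2 decomposition}--\eqref{I2 decomposition}), obtaining a contribution on $(\partial T_{\varepsilon/4}^j)^+$ that yields $C_0^{n-2}H_{G_0}(\varphi^+)$ via Lemma~\ref{lemma ball conv}, and a contribution on $G_\varepsilon^j$ that \emph{cancels} the original surface integral up to the lower-order terms controlled in \eqref{eq:use Holder 1}--\eqref{estim second term}. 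Your alternative claim, that $\varepsilon^{-k}\int_{G_\varepsilon}\sigma(W_\varepsilon)(W_\varepsilon-u_\varepsilon)\,dx'$ converges on its own to the limiting boundary integral, cannot be made to work: the factor $u_\varepsilon$ on $G_\varepsilon$ is not controlled well enough to pass to the limit in a blown-up surface average, which is why the paper arranges a cancellation instead.

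Two smaller points where the paper is more careful than your sketch: (i) it uses the \emph{bounded-domain} profile $w_\varepsilon^j$ solving \eqref{w prob} (vanishing on $(\partial T_{\varepsilon/4}^j)^+$) rather than a cutoff of $\hat w$, and only afterwards compares it to $\hat w_\varepsilon^j$ via Lemma~\ref{lemma w and hat w} and Lemma~\ref{lem:trace theorem vee} --- this sidesteps your cutoff-in-a-mesoscopic-annulus issue entirely; (ii) it freezes $\varphi^+$ at the point $\hat P_\varepsilon^j$ where $\varphi^+$ is \emph{minimal} on $\overline{G_\varepsilon^j}$, not at the cell centre $P_\varepsilon^j$, so that the admissibility $v=\varphi^+-\mathcal W_\varepsilon(\varphi^+,\cdot)+(1-\kappa_\varepsilon)\varphi^-\ge 0$ on $G_\varepsilon$ is immediate from Lemma~\ref{aux fun w estim parameter}. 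With your freezing at $P_\varepsilon^j$ the test function need not lie in $K_\varepsilon$.
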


\paragraph{Case $n = 2$}

As mentioned before (see also \cite{DiGoPoSh Doklady 2018}) the case of $n=2$
requires to introduce some slight changes. The domain is given now in the
following way: we consider $\Omega $ be a bounded domain in $\mathbb{R}%
^{2}\cap \{x_{2}>0\}$, the boundary of which consists of two parts $\partial
\Omega =\Gamma _{1}\cup \Gamma _{2}$ and $\Gamma _{1}=\partial \Omega \cap
\{x_{2}=0\}=[-l,l],l>0$, $\Gamma _{2}=\partial \Omega \cap \{x_{2}>0\}$. We
denote
\begin{equation*}
Y_{1}=\{(y_{1},0)|-1/2<y_{1}<1/2\},\quad \hat{l}_{0}=%
\{(y_{1},0)|-l_{0}<y_{1}<l_{0}\}\subset Y_{1},\quad l_{0}\in (0,1/2).
\end{equation*}%
For a small parameter ${\varepsilon }>0$ and $0<a_{\varepsilon }<{%
\varepsilon }$ we introduce the sets

\begin{equation*}
\widetilde{G}_{\varepsilon}=\bigcup_{j\in \mathbb{Z}^{^{\prime
}}}(a_{\varepsilon}\hat{l}_{0}+{\varepsilon}j)=\bigcup_{j\in \mathbb{Z}%
^{^{\prime }}}l^{j}_{\varepsilon},
\end{equation*}
where $\mathbb{Z}^{^{\prime }}$ is a set of vectors $j=(j_{1},0)$ and $j_{1}$
is a whole number. Denote $\Upsilon_{\varepsilon}=\{j\in \mathbb{Z}%
^{^{\prime }}|\overline{l^{j}_{\varepsilon}}\subset \{x=(x_{1},0):x_{1}\in
[-l+2{\varepsilon},l-2{\varepsilon}]\}$. Consider $Y^{j}_{\varepsilon}={%
\varepsilon}Y_{0}+{\varepsilon}j$ and
\begin{equation*}
l_{\varepsilon }=\bigcup_{j\in \Upsilon _{\varepsilon }}l_{\varepsilon }^{j}.
\end{equation*}
It is easy to see that $\overline{l_{\varepsilon }^{j}}\subset
Y_{\varepsilon }^{j}$. Denote $\gamma _{\varepsilon }=\Gamma _{1}\setminus
\overline{l_{\varepsilon }}$. Note that for $\forall j\in \mathbb{Z}%
^{^{\prime }}$, $|l_{\varepsilon }^{j}|=2a_{\varepsilon }l_{0},$ $%
|l_{\varepsilon }|\cong d{a_{\varepsilon }}{\varepsilon }^{-1}.$
\begin{figure}[H]
	\centering 
	\includegraphics[width=.3\textwidth]{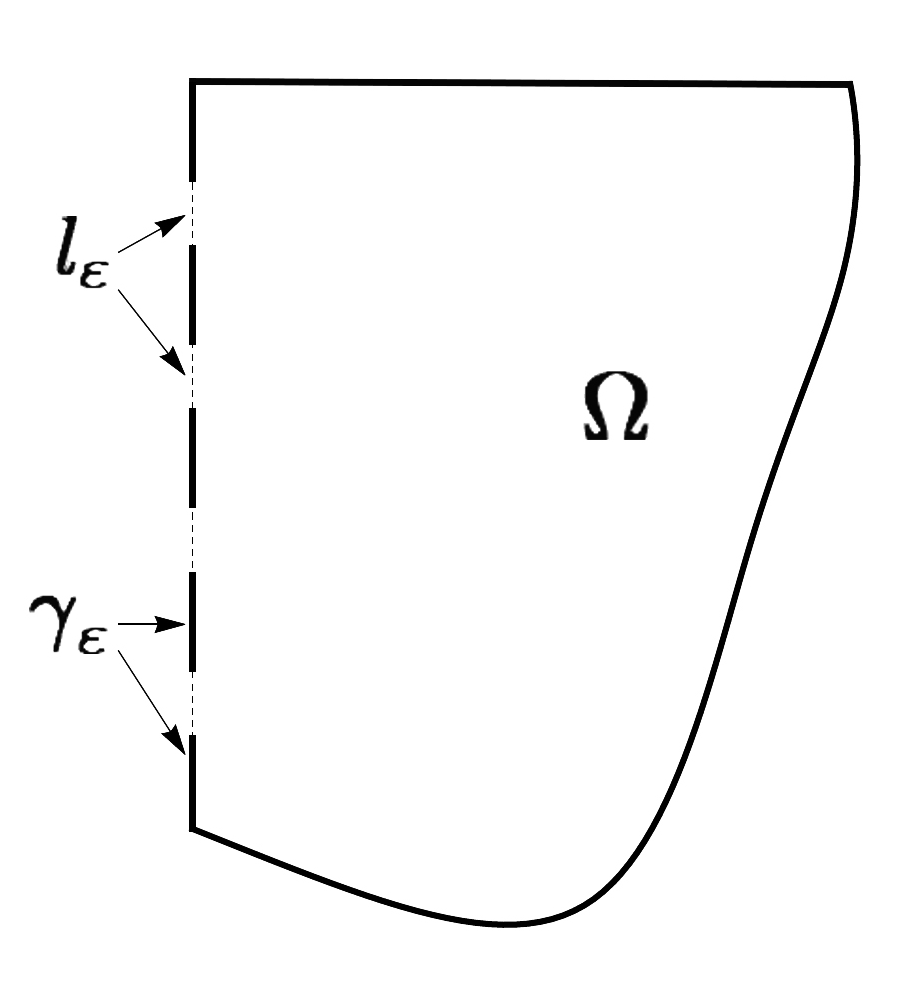}
	\caption{Domain $\Omega$ in two dimensions}
\end{figure}
The formulation of the problem starts by searching
\begin{equation}
u_{\varepsilon }\in K_{\varepsilon }=\left\{ g\in H^{1}(\Omega ,\Gamma
_{2}):\,g\geq 0\mbox{ a.e. on }l_{\varepsilon }\right\}
\label{basic set n=2}
\end{equation}%
be a solution to the following variational inequality
\begin{equation}
\int\limits_{\Omega }\nabla u_{\varepsilon }\nabla (\varphi -u_{\varepsilon
})dx+e^{\frac{\alpha ^{2}}{\varepsilon }}\int\limits_{G_{\varepsilon
}}\sigma (u_{\varepsilon })(\varphi -u_{\varepsilon })dx_{1}\geq
\int\limits_{\Omega }f(\varphi -u_{\varepsilon })dx,
\label{integral identity n=2}
\end{equation}%
where $\varphi $ is an arbitrary function from $K_{\varepsilon }$. This
time, for simplicity, we assume merely that function $\sigma :\mathbb{R}%
\rightarrow \mathbb{R}$ is continuously differentiable, $\sigma (0)=0$ and
and there exists positive constants $k_{1}$, $k_{2}$ such that condition is
satisfied
\begin{equation}
k_{1}\leq \partial _{u}\sigma (u)\leq k_{2},\qquad \forall u\in \mathbb{R}.
\label{sigma cond}
\end{equation}%
(more general terms $\sigma (u)$ where considered in \cite{DiGoPoSh Doklady
2018} but without the Signorini type contraints). Therefore, we have
\begin{equation*}
k_{1}u^{2}\leq u\sigma (u)\leq k_{2}u^{2},\quad \forall u\in \mathbb{R}.
\end{equation*}%
Note that \eqref{basic set n=2}, \eqref{integral identity n=2} is a weak formulation
of the following strong formulation of the problem:
\begin{equation}
\begin{dcases}
-\Delta {u_{\varepsilon }}=f, & x\in \Omega , \\
u_\ee \ge 0 , \\
\quad \partial _{x_{2}}u_{\varepsilon }-\beta (\varepsilon )\sigma (u_{\varepsilon
}) \ge 0\\
\quad u_\ee (\partial _{x_{2}}u_{\varepsilon }-\beta (\varepsilon )\sigma (u_{\varepsilon
}) ) = 0, & x\in l_{\varepsilon }, \\
\partial _{x_{2}}u_{\varepsilon }=0, & x\in \gamma _{\varepsilon }, \\
u_{\varepsilon }=0, & x\in \Gamma _{1},%
\end{dcases}
 \label{problem n=2}
\end{equation}

Our homogenized result in this case is the following:

\begin{theorem}
\label{thm:2} Let $a_{\varepsilon }=C_{0}{\varepsilon }e^{-\frac{\alpha ^{2}%
}{\varepsilon }}$, $\beta (\varepsilon )=e^{\frac{\alpha ^{2}}{\varepsilon }%
} $, $\alpha \neq 0$, $C_{0}>0$ and $u_{\varepsilon }$ be a solution to the
problem \eqref{problem n=2}. Then, there exists a subsequence such that $u_{\varepsilon }\rightarrow u_{0}$, strongly in $L^{2}(\Omega )$, and weakly
in $H^{1}(\Omega ,\Gamma _{1})$, as $\varepsilon \rightarrow 0$, and the
function $u_{0}\in H^{1}(\Omega ,\Gamma _{1})$ is a weak solution to the
following boundary value problem%
\begin{equation}
\left\{
\begin{array}{lr}
-\Delta {u_{0}}=f, & x\in \Omega , \\
\partial _{x_{2}}u_{0}=\frac{\pi }{\alpha ^{2}}%
(H_{l_{0}}(u_{0,+}^{{}})-u_{0,-}^{{}}), & x\in \Gamma _{1}, \\
u_{0}=0, & x\in \Gamma _{2},%
\end{array}%
\right.  \label{homogenized prob n=2}
\end{equation}%
where $H_{l_{0}}(u)$ verifies the functional equation
\begin{equation}
\pi {H}_{l_{0}}(u)=2l_{0}\alpha ^{2}C_{0}\sigma (u-H_{l_{0}}(u)).
\label{func eq}
\end{equation}
\end{theorem}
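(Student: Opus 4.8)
The plan is to follow the classical energy/oscillating-test-function strategy adapted to the critical logarithmic scale, with the Signorini constraint handled as in the $n\ge3$ case. First I would establish the a priori bound $\|\nabla u_\varepsilon\|_{L^2(\Omega)}\le K$ and the extra bound $e^{\alpha^2/\varepsilon}\int_{l_\varepsilon}\sigma(u_\varepsilon)u_\varepsilon\,dx_1\le K$ by testing \eqref{integral identity n=2} with $\varphi=0$ and $\varphi=2u_\varepsilon$ (both admissible since $K_\varepsilon$ is a cone), using $k_1 u^2\le u\sigma(u)$ to control the boundary term from below; this yields the weak $H^1$ and strong $L^2$ compactness stated in the theorem. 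The logarithmic scaling $a_\varepsilon=C_0\varepsilon e^{-\alpha^2/\varepsilon}$, $\beta(\varepsilon)=e^{\alpha^2/\varepsilon}$ is exactly the two-dimensional critical balance: the $2$D capacity of a slit of length $a_\varepsilon$ inside a cell of size $\varepsilon$ behaves like $\pi/\log(\varepsilon/a_\varepsilon)\sim \pi\varepsilon/\alpha^2$ per cell, and multiplying by the number of cells $\sim\varepsilon^{-1}$ gives the finite constant $\pi/\alpha^2$ appearing in \eqref{homogenized prob n=2}.

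The core is the construction of a corrector (oscillating test function). Around each slit $l_\varepsilon^j$ I would build $W_\varepsilon^j(x)$ solving, in the half-ball $Y_\varepsilon^j\cap\{x_2>0\}$, the problem $-\Delta W_\varepsilon^j=0$ with $\partial_{x_2}W_\varepsilon^j-\beta(\varepsilon)\sigma(u-W_\varepsilon^j)=0$ on $l_\varepsilon^j$ (for a frozen value $u$), $W_\varepsilon^j=0$ on $\partial Y_\varepsilon^j$, and glue these with $W_\varepsilon=0$ outside $\bigcup Y_\varepsilon^j$; in logarithmic polar coordinates centered on each slit the leading profile is radial, $W_\varepsilon^j\approx c_\varepsilon\log(|x-P_\varepsilon^j|/\varepsilon)\big/\log(a_\varepsilon/\varepsilon)$, and the nonlinear Robin condition on the slit forces the amplitude to satisfy, after passing to the limit, precisely the functional equation $\pi H_{l_0}(u)=2l_0\alpha^2 C_0\sigma(u-H_{l_0}(u))$ of \eqref{func eq}. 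The test function in \eqref{integral identity n=2} would be $\varphi=u_0\phi - W_\varepsilon\,\mathrm{sgn}$-type combination plus lower-order corrections, chosen so that $\varphi\in K_\varepsilon$ (nonnegativity on $l_\varepsilon$ must be checked carefully — this is where the splitting $u_0=u_{0,+}-u_{0,-}$ enters, giving $H_{l_0}(u_{0,+})$ on the set where the obstacle is inactive and the linear term $-u_{0,-}$ where it is active). Passing to the limit: the bulk term $\int_\Omega\nabla u_\varepsilon\nabla(\varphi-u_\varepsilon)$ is handled by weak convergence together with the identity $\int_\Omega\nabla W_\varepsilon\nabla v\to$ (boundary integral against the capacity density) for smooth $v$, which is the standard "strange term" computation; the rescaled slit integral $e^{\alpha^2/\varepsilon}\int_{l_\varepsilon}\sigma(\varphi)(\varphi-u_\varepsilon)\,dx_1$ converges to $\frac{\pi}{\alpha^2}\int_{\Gamma_1}H_{l_0}(\cdot)(\cdot)\,dx_1$ by the cell-by-cell estimate and $\sum_{j}|l_\varepsilon^j|\beta(\varepsilon)=2l_0 d\,\varepsilon^{-1}\cdot\varepsilon\to 2l_0 d$ controlling the total mass.

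I would then identify the limit variational inequality and show it is in fact the equation \eqref{homogenized prob n=2}: monotonicity of $s\mapsto H_{l_0}(s)$ (which follows from \eqref{func eq} and monotonicity of $\sigma$, analogous to Lemma on $H_{G_0}$) lets me pass from the "very weak" inequality \eqref{integral inequality}-type form back to the weak form, and Minty's trick removes the test-function freezing. Uniqueness of the homogenized problem (the operator $v\mapsto -\Delta v$ with the monotone boundary nonlinearity $\frac{\pi}{\alpha^2}(H_{l_0}(v_+)-v_-)$ is strictly monotone on $H^1(\Omega,\Gamma_2)$) then upgrades subsequential convergence to convergence of the whole family. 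The main obstacle I anticipate is twofold: (i) making the corrector $W_\varepsilon$ genuinely admissible, i.e. simultaneously satisfying the sign constraint on $l_\varepsilon$, the right logarithmic decay, and a uniform energy bound $\|\nabla W_\varepsilon\|_{L^2}=O(1)$ despite $\beta(\varepsilon)\to\infty$ — this requires a delicate barrier argument exploiting that $a_\varepsilon$ is exponentially small so that $\beta(\varepsilon)|l_\varepsilon^j|$ stays bounded; and (ii) justifying the cell-problem linearization that produces \eqref{func eq}, since $\sigma$ is only assumed $C^1$ with bounded derivative and the frozen-coefficient approximation must be controlled uniformly in $j$ and $\varepsilon$. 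Everything else is the now-standard machinery for critically-scaled boundary homogenization with a strange term.
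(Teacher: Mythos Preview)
Your overall strategy is reasonable and would probably succeed, but it diverges from the paper's route in one essential respect: the paper does \emph{not} build a nonlinear corrector for $n=2$. Instead of solving, cell by cell, the nonlinear Robin problem $\partial_{x_2}W_\varepsilon^j-\beta(\varepsilon)\sigma(u-W_\varepsilon^j)=0$ on $l_\varepsilon^j$ that you propose, the paper introduces two \emph{linear} capacity-type correctors: $q_\varepsilon^j$ (harmonic in the half-disc, $=1$ on the slit $l_\varepsilon^j$, $=0$ on the outer half-circle) and $w_\varepsilon^j$ (harmonic in the half-annulus between circles of radii $a_\varepsilon$ and $\varepsilon/4$, $=1$ on the inner circle, $=0$ on the outer one). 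The latter is explicit, $w_\varepsilon^j=\ln(4r/\varepsilon)/\ln(4a_\varepsilon/\varepsilon)$, and a short lemma shows $\|W_\varepsilon-Q_\varepsilon\|_{H^1(\Omega)}\le K\sqrt{\varepsilon}$, so the slit corrector inherits the explicit logarithmic structure. The test function is then simply
\[
v=\psi-Q_\varepsilon\bigl(H(\psi^+)+\psi^-\bigr),
\]
where $H=H_{l_0}$ is defined \emph{a priori} as the scalar solution of the functional equation \eqref{func eq}. Admissibility $v\in K_\varepsilon$ is immediate because $Q_\varepsilon\equiv 1$ on $l_\varepsilon$ and $|H(u)|\le|u|$. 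After expanding the variational inequality, the explicit normal derivatives $\partial_\nu w_\varepsilon^j|_{\partial T_{\varepsilon/4}^j}=4/(\varepsilon\ln(4a_\varepsilon/\varepsilon))$ and $\partial_\nu w_\varepsilon^j|_{\partial T_{a_\varepsilon}^j}$ produce the constant $\pi/\alpha^2$ directly, and the functional equation is used to make the residual term
\[
e^{\alpha^2/\varepsilon}\int_{l_\varepsilon}\Bigl\{\sigma(\psi^+-H(\psi^+))-\tfrac{\pi}{2l_0\alpha^2 C_0}H(\psi^+)\Bigr\}(\cdots)\,dx_1
\]
vanish identically. A separate cell lemma (using an auxiliary Neumann function $m$) converts integrals over $(\partial T_{a_\varepsilon}^j)^+$ into integrals over $l_\varepsilon^j$ up to $O(\sqrt{\varepsilon})$.

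This ``linear corrector $\times$ scalar $H$'' ansatz completely sidesteps the two obstacles you flag: (i) energy bounds and admissibility are trivial because the corrector is linear and explicit, and (ii) no PDE linearization is needed because the functional equation \eqref{func eq} is a scalar identity, not a limit of cell problems. Your approach, which mirrors the $n\ge 3$ argument, is heavier here precisely because in two dimensions there is no decaying exterior profile $\hat w$ to compare the nonlinear $W_\varepsilon^j$ against; you would have to carry out the frozen-coefficient and barrier analysis from scratch in the bounded cell, which is doable but considerably more work than the paper's explicit-logarithm shortcut.
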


\bigskip

The special case of dimension $n = 2$ is
illustrative in order to get a complete identification of the function $%
H_{G_{0}}$. Curiously enough, the characterization condition for $H_{G_{0}}$
and ${H}_{l_{0}}$ (see e.g. the functional equations (\ref{H definition})
and (\ref{func eq}), respectively) is quite related (but not exactly the
same) than the one obtained in \cite{DiGoPoSh Nonlin Anal}, although the
problem under consideration in that paper was not the same than problem (\ref%
{original problem}). It was shown in \cite{DiGoPoSh
Nonlin Anal} if $H$: $\mathbb{R}%
\rightarrow \mathbb{R}$ is the solution of a problem
\begin{equation}
	H(u)  = C \sigma (u - H(u))
\end{equation}
then $H$ is given by
\begin{equation}
	H(u) = ( I + (C\sigma)^{-1})^{-1} (u).
\label{defi function H}
\end{equation}%
for $u\in \mathbb{R}$. When the Signorini condition is include, $\sigma$ can be generalized to the maximal monotone graph
\begin{equation}
	\widetilde \sigma (u) = \begin{dcases}
		\sigma(u) & u > 0, \\
		(-\infty,0] & u= 0, \\
		\emptyset & u < 0.
	\end{dcases}
\end{equation}
It was proven there that the corresponding zero order term is given by
\begin{equation}
	\widetilde H (u) = \begin{dcases}
		H(u) & u > 0, \\
		u & u \le 0.
	\end{dcases}
\end{equation}
This behaviour is interesting, because $\widetilde H$ matches \eqref{defi function H} formally. The maximal monotone graph
\begin{equation}
	\gamma (u) = \begin{dcases}
	0 & u > 0, \\
	(-\infty,0] & u= 0, \\
	\emptyset & u < 0.
	\end{dcases}
\end{equation}
has, formally, inverse
\begin{equation}
	\gamma^{-1} (u) = \begin{dcases}
	\emptyset & u > 0,\\
	[0,+\infty) & u = 0, \\
	0 & u< 0.
	\end{dcases}
\end{equation}
In particular
\begin{equation}
	H_\gamma (u):= (I + (C\gamma)^{-1})^{-1} (u) = \begin{dcases}
	\emptyset & u > 0,\\
	[0,+\infty) & u = 0, \\
	u & u< 0.
	\end{dcases}
\end{equation}
In this way, it is clear that
\begin{equation}
	\widetilde H(u) = \begin{dcases}
		H(u) & u > 0, \\
		H_\gamma (u) & u <0.
	\end{dcases}
\end{equation}
For the case $n = 3$ we will make further comments in this direction in Section \ref{sec:mmg}.\\

Coming back to the framework of the semipermeable membranes problems what we
can conclude is that the homogenization of a set of periodic semipermeable
membranes with an \textquotedblleft infinite permeability
coefficient\textquotedblright , in the critical case, leads to an
homogenized formulation which is equivalent to have a global semipermeable
membrane, at $\Gamma _{1}$, with a \textquotedblleft finite permeability
coefficient of this virtual membrane'' $\mu _{_{\infty }\text{ }}$which is
\textit{the best we can get}, even if the original problem involves a finite
permeability $\mu $. Indeed, this comes from the properties of the function $%
H_{G_{0}}$ (which can be also computed for the case of a microscopic
membrane with finite permeability). For instance, on the subpart of the
boundary $\{x\in \Gamma _{1}$, $w_{0}(x)\leq \psi \}$ (with $w_{0}(x):=\psi
-u_{0}(x)$; i.e. where $u_{0}(x)\leq 0$). Moreover, by using that $%
H_{G_{0}}(0)=0$ and the decomposition $u_{0}=u_{0,+}-u_{0,-}^{{}}$, we know
now that $\partial _{\nu }w_{0}=$ $\lambda _{G_{0}}C_{0}^{n-2}w_{0}$ and
thus, for the case of a microscopic finite permeability membrane, it is not
difficult to show that we get an homogenized permeability membrane\
coefficient given by $\lambda _{G_{0}}C_{0}^{n-2},$ which is larger than $%
\mu $ (see more details in Subsection 6.1 below).

\bigskip

The plan of the rest of the paper is the following: Part I (containing
Sections 2-6) is devoted to the study of the case $n\geq 3$ and contains
also some comments and possible extensions (for instance, we give some link
between the present homogenization results and the homogenization of some
problems involving non-local fractional operators). Part II (containing
Section 7) is devoted to the proof of the convergence result for $n=2.$

\bigskip

\part{The case $n \ge 3$}

\section{Estimates on the auxiliary functions}

\subsection{On the auxiliary function $\ \hat{\protect\kappa}$}

{Using the method of sub and supersolutions as in \cite{DiGoShZu Aux Shape}
the following result holds:}

\begin{lemma}
\label{existence hat kappa} There exists a unique solution $\hat{\kappa}\in
\mathbb{X}$ of the problem \eqref{hat w prob}, where
\begin{equation}
\mathbb{X}=\left\{ v\in L_{loc}^{2}(\overline{(\mathbb{R}^{n})^{+}}):\nabla
v\in L^{2}((\mathbb{R}^{n})^{+})^{n},|v|\leq \frac{K}{|y|^{n-2}}\right\}
\end{equation}%
such that
\begin{equation}
0\leq \hat{\kappa}(y)\leq 1\quad a.e.\text{ }y\in {\overline{(\mathbb{R}%
^{n})^{+}},}  \label{hat kappa estim 2}
\end{equation}%
and
\begin{equation}
\hat{\kappa}(y)\leq \frac{K}{|y|^{n-2}}\quad a.e.\text{ }y\in {\overline{(%
\mathbb{R}^{n})^{+}}}.  \label{hat kappa estim}
\end{equation}
\end{lemma}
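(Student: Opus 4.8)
\emph{Plan of proof.} Problem \eqref{hat kappa prob} is of pure capacity type, so I would first remove the Neumann condition by reflection and then argue variationally. Extending $\hat{\kappa}$ evenly across the hyperplane by $\tilde{\kappa}(y_1,y')=\hat{\kappa}(|y_1|,y')$, the condition $\partial_\nu\hat{\kappa}=0$ on $\{y_1=0\}\setminus\overline{G_0}$ becomes harmonicity of $\tilde{\kappa}$ across that part of the hyperplane; thus solving \eqref{hat kappa prob} in $\mathbb{X}$ is equivalent to finding a bounded $\tilde{\kappa}$, harmonic in $\mathbb{R}^n\setminus\overline{G_0}$, equal to $1$ on $\overline{G_0}$, tending to $0$ at infinity, with $\nabla\tilde{\kappa}\in L^2(\mathbb{R}^n)$ and $|\tilde{\kappa}|\le K|y|^{2-n}$. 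Since $n\ge 3$ and $\overline{G_0}$ is a smooth $(n-1)$-dimensional piece, $G_0$ has strictly positive $H^1(\mathbb{R}^n)$-capacity, so the closed convex set $\mathbb{K}=\{v\in\mathcal{D}^{1,2}(\mathbb{R}^n):v=1\text{ quasi-everywhere on }G_0\}$ is non-empty, where $\mathcal{D}^{1,2}(\mathbb{R}^n)$ is the completion of $C_c^\infty(\mathbb{R}^n)$ for the norm $\|\nabla\cdot\|_{L^2}$; by the Sobolev inequality $\|v\|_{L^{2^\ast}}\le C_n\|\nabla v\|_{L^2}$ this is a Hilbert space continuously embedded in $L^{2^\ast}(\mathbb{R}^n)$.

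\emph{Existence and uniqueness.} I would obtain $\tilde{\kappa}$ as the unique minimiser over $\mathbb{K}$ of the Dirichlet energy $J(v)=\tfrac12\int_{\mathbb{R}^n}|\nabla v|^2\,dy$. A minimising sequence is bounded in $\mathcal{D}^{1,2}$, hence weakly convergent along a subsequence; $\mathbb{K}$ is weakly closed (convex and strongly closed, the latter because $\mathcal{D}^{1,2}$-convergence implies, up to a subsequence, quasi-everywhere convergence) and $J$ is weakly lower semicontinuous, which gives a minimiser; two minimisers differ by an element of $\mathcal{D}^{1,2}$ with vanishing gradient, hence by a constant, necessarily $0$ since the only constant in $L^{2^\ast}(\mathbb{R}^n)$ is $0$. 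The Euler--Lagrange identity $\int_{\mathbb{R}^n}\nabla\tilde{\kappa}\cdot\nabla\varphi=0$ for every $\varphi\in\mathcal{D}^{1,2}$ vanishing quasi-everywhere on $G_0$ shows $\tilde{\kappa}$ is harmonic in $\mathbb{R}^n\setminus\overline{G_0}$; elliptic regularity for the mixed Dirichlet--Neumann problem gives boundedness and smoothness up to $\{y_1=0\}\setminus\overline{G_0}$, and the decay of a harmonic function vanishing at infinity outside a compact set yields $|\tilde{\kappa}|\le K|y|^{2-n}$, so that undoing the reflection produces a solution $\hat{\kappa}\in\mathbb{X}$ of \eqref{hat kappa prob}. (Alternatively, one may run the sub/supersolution scheme of \cite{DiGoShZu Aux Shape} between the ordered pair $0\le K|y|^{2-n}$ constructed below.) Uniqueness in $\mathbb{X}$ is then immediate: if $\hat{\kappa}_1,\hat{\kappa}_2$ solve \eqref{hat kappa prob}, their difference reflects to a bounded harmonic function on $\mathbb{R}^n\setminus\overline{G_0}$ vanishing on $\overline{G_0}$ and at infinity, hence identically $0$.

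\emph{The two estimates.} For \eqref{hat kappa estim 2}: both $\max\{\hat{\kappa},0\}$ and $\min\{\hat{\kappa},1\}$ again belong to $\mathbb{K}$ (they equal $1$ on $G_0$) and do not increase $J$, so by uniqueness of the minimiser $\hat{\kappa}=\max\{\hat{\kappa},0\}=\min\{\hat{\kappa},1\}$, i.e.\ $0\le\hat{\kappa}\le1$; equivalently this follows from the maximum and minimum principles applied to $\tilde{\kappa}$ on $\mathbb{R}^n\setminus\overline{G_0}$. For \eqref{hat kappa estim}, fix $y_0\in G_0$ and set $\overline{\kappa}(y)=K|y-y_0|^{2-n}$; since $\overline{G_0}\subset\{|y|<1/4\}$ one has $\operatorname{diam}\overline{G_0}<1/2$, so $\overline{\kappa}\ge1$ on $G_0$ for $K$ large, while $\overline{\kappa}$ is harmonic in $(\mathbb{R}^n)^+$, satisfies $\partial_{y_1}\overline{\kappa}=0$ on $\{y_1=0\}$, and tends to $0$ at infinity; thus $\overline{\kappa}$ is a supersolution of \eqref{hat kappa prob}. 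Comparing on the exterior domain — for $\eta>0$ the harmonic function $\hat{\kappa}-\overline{\kappa}-\eta$ is $\le0$ on $\overline{G_0}$ and negative outside a large ball, hence $\le0$ by the maximum principle, and one lets $\eta\downarrow0$ — yields $\hat{\kappa}\le K|y-y_0|^{2-n}$, which together with $\hat{\kappa}\le1$ gives $\hat{\kappa}\le K|y|^{2-n}$ on all of $(\mathbb{R}^n)^+$ after adjusting $K$, establishing \eqref{hat kappa estim}.

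\emph{Main difficulty.} The genuinely delicate points are the functional setting of the exterior problem — identifying $\mathcal{D}^{1,2}(\mathbb{R}^n)$ as the correct space, checking that $G_0$ carries positive capacity so that $\mathbb{K}\neq\emptyset$, and using the Sobolev inequality for coercivity — together with the pointwise decay rate $|y|^{2-n}$, which requires the explicit supersolution $K|y-y_0|^{2-n}$ and a comparison principle valid on an unbounded domain; the order bounds $0\le\hat{\kappa}\le1$ and the uniqueness are then routine.
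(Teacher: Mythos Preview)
Your argument is correct and complete. The paper itself does not give a proof; it simply cites the sub/supersolution scheme of \cite{DiGoShZu Aux Shape} and moves on. You instead reflect to $\mathbb{R}^n\setminus\overline{G_0}$ and run the standard capacity/variational construction in $\mathcal{D}^{1,2}(\mathbb{R}^n)$, obtaining existence and uniqueness from the minimisation of the Dirichlet energy, the bounds $0\le\hat\kappa\le1$ from the truncation (equivalently, maximum principle) argument, and the $|y|^{2-n}$ decay from comparison with a shifted Newtonian potential centred at a point of $G_0$. This is a genuinely different and more self-contained route: it avoids the monotone-iteration machinery on an unbounded domain that the sub/supersolution method requires, at the cost of invoking the Sobolev inequality and the fact that an $(n-1)$-dimensional smooth piece has positive $H^1$-capacity in $\mathbb{R}^n$ for $n\ge3$. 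Both approaches deliver the same decay estimate via the same supersolution $K|y-y_0|^{2-n}$, so the real difference lies only in how existence is produced. Your parenthetical remark that one could alternatively run the sub/supersolution scheme between $0$ and $K|y|^{2-n}$ is precisely what the paper has in mind.
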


We define also the family of auxiliary functions {\
\begin{equation}
\hat{\kappa}_{\varepsilon }^{j}=\kappa \left( \frac{x-P_{\varepsilon }^{j}}{%
a_{\varepsilon }}\right) .
\end{equation}%
}

\subsection{On the auxiliary function $\hat w$}

Arguing, as above, in similar terms to the paper {\cite{DiGoShZu Aux Shape}
we get:}

\begin{lemma}
\label{existence hat w} Let $\sigma $ be a maximal monotone graph. There
exists a unique solution $\widehat{w}(\cdot ,u)\in \mathbb{X}$ of problem %
\eqref{hat w prob}. Furthermore, it satisfies that {\ }

\begin{itemize}
\item If $u\geq 0$ then $0\leq \widehat{w}(y,u)\leq u\hat{\kappa}(y)\leq u.$

\item If $u \le 0$ then $0 \ge \widehat w(y, u) \ge u \hat\kappa (y) \ge u$.
\end{itemize}

\noindent Hence
\begin{equation}
|\hat{w}(y;u)|\leq |u|\hat{\kappa}(y)\quad a.e.\text{ }y\in {\overline{(%
\mathbb{R}^{n})^{+}}}.  \label{hat w estim}
\end{equation}
\end{lemma}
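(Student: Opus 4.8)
The strategy is to treat the exterior problem \eqref{hat w prob} as a monotone variational problem posed on the function space $\mathbb{X}$, and to obtain the stated pointwise bounds by a comparison argument using the auxiliary function $\hat{\kappa}$ from Lemma \ref{existence hat kappa}. First I would set up the weak formulation: multiply the equation by a test function $v\in\mathbb{X}$ vanishing suitably at infinity, integrate by parts over $(\mathbb{R}^n)^+$, and use the Neumann condition on $\{y_1=0\}\setminus G_0$ together with the nonlinear condition $\partial_\nu\hat w = C_0\sigma(u-\hat w)$ on $G_0$ to arrive at
\begin{equation*}
\int_{(\mathbb{R}^n)^+}\nabla\hat w\cdot\nabla v\,dy + C_0\int_{G_0}\big(\,\hat w-u+(\,u-\hat w\,)\,\big)\text{-type terms}\dots
\end{equation*}
more precisely the bilinear-plus-monotone form $a(\hat w,v)+C_0\int_{G_0}\big(\sigma(\hat w - \cdot)\dots\big)$; the point is that the map $\hat w\mapsto -C_0\sigma(u-\hat w)$ is monotone nondecreasing in $\hat w$ because $\sigma$ is nondecreasing. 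Since the Dirichlet form $\int_{(\mathbb{R}^n)^+}|\nabla v|^2$ is coercive on $\mathbb{X}$ (this is where the decay $|v|\le K|y|^{2-n}$ and the Hardy/Sobolev inequality on the half-space enter, exactly as in \cite{DiGoShZu Aux Shape}), the sum of a coercive linear operator and a monotone graph term is maximal monotone and coercive, so existence and uniqueness of $\hat w(\cdot;u)\in\mathbb{X}$ follow from the standard theory of variational inequalities / maximal monotone operators (Brézis, Lions–Stampacchia). Uniqueness also follows directly by subtracting two solutions, testing with the difference, and using strict monotonicity of the Dirichlet part plus monotonicity of $\sigma$.

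For the pointwise bounds, consider first $u\ge 0$. The lower bound $\hat w\ge 0$: test the variational inequality with $\hat w_- = \max\{-\hat w,0\}$; on $G_0$ one uses $\sigma(0)=0$ and monotonicity of $\sigma$ to control the boundary term with the correct sign, concluding $\|\nabla \hat w_-\|_{L^2}=0$, hence $\hat w_-\equiv 0$ by the decay at infinity. The upper bound $\hat w\le u\hat\kappa$: set $z:=u\hat\kappa-\hat w$. Since $\hat\kappa$ is harmonic in $(\mathbb{R}^n)^+$ and so is $\hat w$, $z$ is harmonic; on $G_0$ we have $\hat\kappa=1$, so $z=u-\hat w$ there, and comparing the Neumann data — $\partial_\nu(u\hat\kappa)=u\,\partial_\nu\hat\kappa\ge 0$ (recall $\partial_\nu\hat\kappa>0$ by the strong maximum principle and $u\ge0$) versus $\partial_\nu\hat w = C_0\sigma(u-\hat w)$ — one gets, after testing the equation for $z$ with $z_-$ and using again $\sigma(0)=0$ and monotonicity, that $z_-\equiv 0$, i.e. $\hat w\le u\hat\kappa$. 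Together with $\hat\kappa\le 1$ this gives the first bullet. The case $u\le 0$ is entirely symmetric (replace $u$ by $-u$, or equivalently use that $\hat w(\cdot;-u)=-\hat w(\cdot;u)$ when $\sigma$ is odd, but in general just repeat the comparison argument with reversed inequalities); this yields the second bullet, and combining the two cases gives \eqref{hat w estim}.

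I expect the main obstacle to be the rigorous handling of the exterior/unbounded domain: one must justify the integration by parts and the vanishing of boundary terms at infinity using only the decay $|v|\le K|y|^{2-n}$ (and $\nabla v\in L^2$), verify that $\mathbb{X}$ with the seminorm $\|\nabla\cdot\|_{L^2}$ is a genuine Hilbert space on which the Dirichlet form is coercive (half-space Sobolev inequality $\|v\|_{L^{2n/(n-2)}}\lesssim\|\nabla v\|_{L^2}$), and confirm that the trace on $G_0$ makes the boundary integral $\int_{G_0}\sigma(u-\hat w)v\,dy'$ well defined and continuous. The sign bookkeeping in the comparison arguments — keeping track of the outward normal $\nu=(-1,0,\dots,0)$ and the monotonicity direction of $\sigma$ on $G_0$ — is the second delicate point, but it is routine once the variational framework is fixed; and in any case, as the text says, all of this proceeds "in similar terms to \cite{DiGoShZu Aux Shape}", so the intended proof is largely a citation with the comparison inequalities spelled out.
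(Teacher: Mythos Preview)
Your proposal is correct and matches the paper's approach, which is literally just the sentence ``Arguing, as above, in similar terms to the paper \cite{DiGoShZu Aux Shape} we get:'' with no further details. Your sketch of existence via the maximal monotone operator framework and the comparison bounds via testing with $\hat w_-$ and $(u\hat\kappa-\hat w)_-$ is exactly the variational incarnation of the sub/supersolution method the paper invokes by citation, and your identification of $0$ and $u\hat\kappa$ as the relevant barriers (using $\hat\kappa=1$ on $G_0$ so that $\sigma(u-u\hat\kappa)=\sigma(0)=0$, together with $u\,\partial_\nu\hat\kappa\ge 0$) is precisely what makes the comparison go through.
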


Concerning the dependence with respect to the parameter $u$ we start by
considering the case in which $\sigma $ is a maximal monotone graph
associated to a continuous function:

\begin{lemma}
Let $\sigma$ be a nondecreasing continuous function such that $\sigma(0) = 0$%
. Then
\begin{equation}
| \hat w (u_1, y) - \hat w(u_2, y) | \le |u_1 - u_2|.
\end{equation}
for all $u_1, u_2 \in \mathbb{R}$.
\end{lemma}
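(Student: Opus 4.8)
The plan is to use the variational characterization of $\hat w(\cdot;u)$ combined with Stampacchia-type truncation, in the same spirit as the energy arguments of \cite{DiGoShZu Aux Shape}. Writing $\hat w_i := \hat w(\cdot;u_i)$, problem \eqref{hat w prob} is equivalent to: $\hat w_i \in \mathbb{X}$ and
\begin{equation*}
\int_{(\mathbb{R}^n)^+} \nabla \hat w_i \cdot \nabla \varphi \, dy = C_0 \int_{G_0} \sigma(u_i - \hat w_i)\, \varphi \, dy'
\end{equation*}
for every test function $\varphi$ in the natural (homogeneous) energy class, i.e. with $\nabla\varphi \in L^2((\mathbb{R}^n)^+)^n$ and the decay inherited from $\mathbb{X}$; there is no constraint on $\varphi$ along $\{y_1=0\}$ since the flat-boundary conditions are of Neumann/Robin type. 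Without loss of generality assume $u_1 > u_2$ and set $\delta := u_1 - u_2 > 0$ and $v := \hat w_1 - \hat w_2$. By Lemma \ref{existence hat w} both $\hat w_i$, hence $v$, satisfy $|v| \le K|y|^{2-n}$, so the truncations used below again lie in the admissible class.

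First I would establish monotonicity, $v \ge 0$. Subtracting the two weak formulations and testing with $\varphi = v_- = \max\{\hat w_2 - \hat w_1, 0\}$ gives
\begin{equation*}
- \int_{(\mathbb{R}^n)^+} |\nabla v_-|^2 \, dy = C_0 \int_{G_0} \big( \sigma(u_1 - \hat w_1) - \sigma(u_2 - \hat w_2) \big)\, v_- \, dy'.
\end{equation*}
On $\{v_->0\}=\{\hat w_1 < \hat w_2\}$ one has $u_1 - \hat w_1 > u_2 - \hat w_1 > u_2 - \hat w_2$, so monotonicity of $\sigma$ makes the right-hand side nonnegative; since the left-hand side is nonpositive, both vanish, hence $\nabla v_- \equiv 0$ and therefore $v_- \equiv 0$ (a gradient-free element of $\mathbb{X}$ is constant and vanishes by the decay bound, equivalently by the Sobolev inequality $\|w\|_{L^{2n/(n-2)}} \le C\|\nabla w\|_{L^2}$ valid for $n\ge 3$). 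Thus $\hat w_1 \ge \hat w_2$.

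Next, for the Lipschitz bound I would test the same identity with $\varphi = (v-\delta)_+ = \max\{\hat w_1 - \hat w_2 - \delta,\,0\}$, obtaining
\begin{equation*}
\int_{(\mathbb{R}^n)^+} |\nabla (v-\delta)_+|^2 \, dy = C_0 \int_{G_0} \big( \sigma(u_1 - \hat w_1) - \sigma(u_2 - \hat w_2) \big)\, (v-\delta)_+ \, dy'.
\end{equation*}
On $\{(v-\delta)_+>0\}=\{\hat w_1 - \hat w_2 > u_1 - u_2\}$ one may rearrange to $u_1 - \hat w_1 < u_2 - \hat w_2$, so now monotonicity of $\sigma$ makes the right-hand side nonpositive; as the left-hand side is nonnegative, both vanish, giving $(v-\delta)_+ \equiv 0$, i.e. $\hat w_1 - \hat w_2 \le u_1 - u_2$. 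Combining the two steps, $0 \le \hat w_1 - \hat w_2 \le u_1 - u_2$, and exchanging the roles of $u_1,u_2$ yields $|\hat w(u_1,y) - \hat w(u_2,y)| \le |u_1 - u_2|$ for a.e. $y$; the case $u_1 = u_2$ is trivial by uniqueness.

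The main technical point to watch is the legitimacy of the truncations $v_-$ and $(v-\delta)_+$ as test functions in this unbounded-domain energy space, and the implication ``$\nabla w \equiv 0$ in $\mathbb{X}$ $\Rightarrow w \equiv 0$''. Both are secured by the pointwise decay $|v| \le K|y|^{2-n}$ of Lemma \ref{existence hat w} (which makes $(v-\delta)_+$ compactly supported when $\delta>0$, and keeps all truncations of finite Dirichlet energy) together with the Deny--Lions/Sobolev inequality on $(\mathbb{R}^n)^+$; one should also record that, the flat-boundary conditions being natural, the weak formulation holds against all such $\varphi$ with no extra boundary restriction, and that the $G_0$-integrals converge since $G_0$ is bounded and $\sigma$ is continuous with bounded arguments. (Alternatively, the same two inequalities can be read off a maximum-principle/Hopf-lemma comparison, using that $\hat w_2 + \delta$ solves the PDE and the Robin condition on $G_0$ exactly while overshooting the decay at infinity, but the energy argument above is more robust for weak solutions.)
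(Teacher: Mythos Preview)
Your proposal is correct and follows essentially the same route as the paper: test the difference equation with $v_-$ to get $v\ge 0$, and then with $(v-\delta)_+$ (which is exactly the paper's $(u_1-u_2-v)^-$) to get $v\le \delta$, both times exploiting the monotonicity of $\sigma$ on the boundary integral. The only technical difference is that the paper inserts an explicit spatial cutoff $\varphi(\rho(x,G_0)/R)$ and sends $R\to\infty$ to justify testing on the unbounded domain, whereas you argue directly that the truncations lie in the energy class (noting in particular that $(v-\delta)_+$ is compactly supported by the decay bound); your shortcut is legitimate and slightly cleaner.
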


\begin{proof}
    Let $\hat w (u_1, y),\, \hat w(u_2, y)$ be two solutions of the problem \eqref{hat w prob} with parameters $u_1,\, u_2 \in\mathbb{R}$. Consider the function $v = w(u_1, y) - w(u_2, y)$. This function is a solution of the following exterior problem
    \begin{equation}\label{Problem for difference of solutions}
    \left\{
    \begin{array}{ll}
    \Delta v = 0, & y \in(\mathbb R ^n )^+,\\
    \pa_{\nu_y} v = C_0\left(\sigma(u_1 - \hat w(u_1,y)) - \sigma(u_2 - \hat w(u_2,y))\right), & y \in G_0,\\
    v \rightarrow 0 & \mbox{as } |y| \rightarrow \infty,
    \end{array}
    \right.
    \end{equation}

    First consider the case $u_1 > u_2$. If we choose $v^-$ as a test function in the integral identity for the problem \eqref{Problem for difference of solutions} we arrive at
    \begin{equation*}
    \int\limits_{(\mathbb R ^n )^+}|\nabla v^-|^2dx + C_0\int\limits_{G_0}\left(\sigma(u_1 - \hat w(u_1,y)) - \sigma(u_2 - \hat w(u_2,y))\right)v^- ds = 0
    \end{equation*}
    The second integral in the obtained expression can be nonzero only if $v < 0$, i.e. $\hat w(u_1, y) - \hat w(u_2, y) < 0$. By combining this inequality with the condition $u_1 > u_2$ we get $u_1 - \hat w(u_1, y) > u_2  - \hat w(u_2, y)$. This inequality and monotonicity of the function $\sigma$ imply that second integral is non-negative. Hence, two integrals must be equal to zero, so $v^- = 0$ $\mathcal{L}^{n-1}$-a.e. in $G_0$ and $v^- = c$ in $(\mathbb{R}^n)^+$. But we have $v\rightarrow 0$ as $|y|\rightarrow\infty$, hence, $c = 0$, i.e. $\hat w(u_1, y) - \hat w(u_2, y) \ge 0$.

    One can construct function $\varphi(r) \in C^{\infty}_0(\mathbb{R})$ such that $\varphi = 0$ if $|r| > 1$ and $\varphi = 1$ if $|r| < 0.5$. We take $(u_1 - u_2 - v)^{-}\varphi(\rho(x, G_0)/R)$ as a test function in the integral identity for the problem \eqref{Problem for difference of solutions} and obtain
    \begin{align*}
    -\int\limits_{(\mathbb R ^n )^+}\nabla v & \nabla (u_1 - u_2 - v)^{-}\varphi(\rho(x, G_0)/R)dx\,-\\
    & \quad -\int\limits_{(\mathbb R ^n )^+}\frac{\varphi'(\rho(x, G_0)/R)}{R}(u_1 - u_2 - v)^{-}\nabla v\nabla\rho dx\,+\\
    & \quad +C_0\int\limits_{G_0} \left(\sigma(u_1 - \hat w(u_1,y)) - \sigma(u_2 - \hat w(u_2,y))\right)(u_1 - u_2 - v)^{-}ds  \,\\
    &  = I_{1, R} + I_{2, R} + I_3 = 0.
    \end{align*}
    Since $\sigma$ is monotone $I_3 \le 0$.
	\\
    For the first integral we have
    \begin{equation*}
    I_{1, R} \le -\int\limits_{\mathcal{D}_{1,R}} |\nabla(u_1 - u_2 - v)^-|^2 dx \le 0,
    \end{equation*}
    where $\mathcal{D}_{1,R} = ((\mathbb R ^n )^+)\bigcap \{x\in\mathbb{R}^n: \rho(x, G_0) < R\}$. We have that
    \begin{equation*}
    I_1 = -\int\limits_{(\mathbb R ^n )^+}|\nabla(u_1 - u_2 - v)^-|^2 dx = \lim\limits_{R\rightarrow\infty}I_{1,R}\le 0
    \end{equation*}
    For the second integral we derive estimation
    \begin{align*}
    |I_{2, R}| &\le K_1\int\limits_{\mathcal{D}_{2,R}}\frac{|v|}{R}|\nabla v| dx \le \frac{K_1}{R}\Vert v\Vert_{L_2(\mathcal{D}_{2,R})}\Vert\nabla v\Vert_{L_2((\mathbb{R}^n)^+)}\\
    &\le \frac{K_2}{R^{\frac{n-2}{2}}}\Vert\nabla v\Vert_{L_2((\mathbb{R}^n)^+)} \rightarrow 0,\quad\mbox{as } R\rightarrow\infty
    \end{align*}
    where $\mathcal{D}_{2,R} = ((\mathbb R ^n )^+)\bigcap \{x\in\mathbb{R}^n: R/2 < \rho(x, G_0) < R\}$. Thereby, as $R\rightarrow\infty$ we have
    $
    I_1 + I_3 = 0, I_1 \le 0, I_3 \le 0,
    $
    and so $$I_1 = 0, \quad I_3 = 0.$$
    Taking into account that $v \rightarrow 0 \mbox{ as } |y| \rightarrow \infty$ we derive from the last corollary that $(u_1 - u_2 - v)^{-} \equiv 0$, i.e. $0 \le v < u_1 - u_2$ in $(\mathbb{R}^n)^+$. Moreover, we have that $v < u_1 - u_2$\, $\mathcal{L}^{n-1}$-a.e. in $G_0$.

    The case $u_1 < u_2$ is analogous to that above, so we have $u_1 - u_2 \le v \le 0$ in $(\mathbb{R}^n)^+$ and $\mathcal{L}^{n-1}$-a.e. in $G_0$. This concludes the proof.
\end{proof}

The use of the comparison principle leads to an additional conclusion:

\begin{lemma}
Let $u_1 > u_2$ then
\begin{equation}  \label{eq:comparison u1 u2 and kappa}
0 \le \hat w (u_1 , y) - \hat w(u_2, y) \le (u_1- u_2 ) \hat \kappa (y).
\end{equation}
\end{lemma}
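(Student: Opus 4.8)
Since the lower bound $0\le\hat w(u_1,y)-\hat w(u_2,y)$ is already contained in the previous lemma, only the upper bound in \eqref{eq:comparison u1 u2 and kappa} needs an argument, and the plan is to obtain it by comparing $v:=\hat w(u_1,\cdot)-\hat w(u_2,\cdot)$ with $c\,\hat\kappa$, where $c:=u_1-u_2>0$ and $\hat\kappa$ is the capacity potential of \eqref{hat kappa prob}. Recall from the proof of the previous lemma that $v\in\mathbb{X}$ is harmonic in $(\mathbb{R}^n)^+$, satisfies $\partial_\nu v=0$ on $\{y_1=0\}\setminus\overline{G_0}$ and $v\to0$ at infinity, and that $0\le v\le c$ everywhere, in particular $\mathcal{L}^{n-1}$-a.e.\ on $G_0$. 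Setting $w:=v-c\,\hat\kappa\in\mathbb{X}$ and $w^+:=\max\{w,0\}$, the key observation is that $\hat\kappa=1$ on $G_0$ forces $w=v-c\le0$ there, so $w^+=0$ $\mathcal{L}^{n-1}$-a.e.\ on $G_0$, while $w^+$ still inherits the $|y|^{-(n-2)}$ decay of $\mathbb{X}$.

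Next I would test the weak formulations of $v$ and of $\hat\kappa$ against (a truncation of) $w^+$ and combine. For $v$, I take $w^+$ multiplied by a smooth cut-off equal to $1$ on $\{\rho(x,G_0)<R/2\}$ and vanishing on $\{\rho(x,G_0)>R\}$ (the one constructed in the preceding proof) in the integral identity for problem \eqref{Problem for difference of solutions} and let $R\to\infty$: the term carrying the gradient of the cut-off is killed by a bound of the form $KR^{-(n-2)/2}\|\nabla v\|_{L^2((\mathbb{R}^n)^+)}\to0$, exactly as for $I_{2,R}$ above, while the boundary term $C_0\int_{G_0}\big(\sigma(u_1-\hat w(u_1,\cdot))-\sigma(u_2-\hat w(u_2,\cdot))\big)w^+\,dy'$ vanishes because $w^+=0$ on $G_0$; this yields $\int_{(\mathbb{R}^n)^+}\nabla v\cdot\nabla w^+\,dy=0$. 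Repeating the same truncation with the weak formulation of $\hat\kappa$ --- namely $\hat\kappa=1$ on $G_0$ together with $\int_{(\mathbb{R}^n)^+}\nabla\hat\kappa\cdot\nabla\varphi\,dy=0$ for every $\varphi\in\mathbb{X}$ with $\varphi=0$ on $G_0$, for which $w^+$ is admissible precisely because it vanishes on $G_0$ --- gives $\int_{(\mathbb{R}^n)^+}\nabla\hat\kappa\cdot\nabla w^+\,dy=0$. Subtracting, $\int_{(\mathbb{R}^n)^+}|\nabla w^+|^2\,dy=\int_{(\mathbb{R}^n)^+}\nabla w\cdot\nabla w^+\,dy=0$, hence $w^+$ is constant on the connected set $(\mathbb{R}^n)^+$; since $w^+\to0$ as $|y|\to\infty$, this constant is $0$, so $v\le c\,\hat\kappa$, which together with $v\ge0$ is exactly \eqref{eq:comparison u1 u2 and kappa}.

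The only genuinely delicate step will be making the choice $\varphi=w^+$ legitimate in the unbounded half-space, i.e.\ justifying the passage $R\to\infty$ in both identities; but this is routine given the decay built into $\mathbb{X}$ and is handled exactly as in the treatment of $I_{2,R}$ in the preceding proof, after which the argument is just the energy form of the maximum principle for the mixed Dirichlet--Neumann exterior problem. An alternative I would keep in reserve is to extend $w$ by even reflection in $y_1$ across $\{y_1=0\}\setminus\overline{G_0}$, obtaining a function harmonic in $\mathbb{R}^n\setminus\overline{G_0}$ that is nonnegative on $\overline{G_0}$ and vanishes at infinity, and to conclude $w\le0$ from the classical minimum principle; I would nonetheless prefer the energy argument above, since it sidesteps having to discuss removability of the $(n-2)$-dimensional edge $\partial G_0$ for the reflected function.
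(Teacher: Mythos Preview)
Your proof is correct, but it takes a different route from the paper. The paper's own argument is precisely the ``alternative kept in reserve'' that you describe at the end: it extends both $\varphi_1=\hat w(u_1,\cdot)-\hat w(u_2,\cdot)$ and $\varphi_2=(u_1-u_2)\hat\kappa$ by even reflection across $\{y_1=0\}$ to harmonic functions on $\mathbb{R}^n\setminus\overline{G_0}$, notes that $\varphi_2-\varphi_1\ge 0$ on $G_0$ and that $\varphi_2-\varphi_1\to 0$ at infinity, and invokes the classical comparison principle in one line. Your energy argument---testing both weak formulations against $w^+=(v-c\hat\kappa)^+$, which vanishes on $G_0$ so that the $G_0$-boundary contributions drop out, and concluding $\|\nabla w^+\|_{L^2}=0$---is a legitimate and self-contained alternative that avoids any discussion of the reflected function near the edge $\partial G_0$, at the cost of redoing the cut-off/$R\to\infty$ bookkeeping already carried out for $I_{2,R}$ in the preceding lemma. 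The paper evidently regards the removability issue at $\partial G_0$ as unproblematic (the reflected functions lie in $H^1_{loc}$ and the edge has zero $H^1$-capacity in dimension $n\ge 3$), which is why it opts for the shorter classical route.
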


\begin{proof}
    The functions $\varphi_1 (y) = \hat w (u_1 , y) - \hat w(u_2, y)$ and $\varphi_2  (y)= (u_1 - u_2) \hat \kappa(y)$ can be extended (by symmetry) as harmonic functions to $\mathbb R^n \setminus \overline{G_0}$ such that $\lim_{|y|\to \infty} (\varphi_2 - \varphi_1 ) = 0$ and $\varphi_2 - \varphi_1 \ge 0$ on $G_0$. The comparison principles proves the result.
\end{proof}

A more regular dependence with respect to $u$ can be also proved under
additional regularity on function $\sigma $:

\begin{lemma}[Differentiable dependence of solutions]
Let $\sigma \in \mathcal{C}^{1}$ and $\sigma ^{\prime }\geq k_{1}>0$. Then,
the map $u\in \mathbb{R}\mapsto \hat{w}(u,\cdot )\in L_{loc}^{2}(K)$ is
differentiable, for every smooth bounded set $K$ such that $G_{0}\subset
K\subset \overline{(\mathbb{R}^{n})^{+}}$. Furthermore, if we define
\begin{equation}
\hat{W}(u,y)=\frac{\partial \hat{w}(y;u)}{\partial u}
\end{equation}%
then
\begin{equation}
\int\limits_{(\mathbb{R}^{n})^{+}}\nabla \hat{W}(u,y)\nabla \varphi
dy=C_{0}\int\limits_{G_{0}}\sigma ^{\prime }(u-\widehat{w}(u,y))\left( 1-%
\hat{W}(u,y)\right) \varphi (y)dS_{y}  \label{eq:weak formulation of W}
\end{equation}%
for $\varphi \in C_{c}^{\infty }(\overline{(\mathbb{R}^{n})^{+}})$, $\nabla
\hat{W}\in L^{2}(\Omega )^{n}$ and $0\leq \hat{W}(u,y)\leq \hat{\kappa}(y)$.
In particular,
\begin{equation}
0\leq \frac{\partial \widehat{w}}{\partial u}\leq \hat \kappa (y).
\end{equation}
\end{lemma}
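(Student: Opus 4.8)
The plan is to establish differentiability through difference quotients. For $h\neq 0$ put $D_h(y):=h^{-1}\big(\hat w(u+h,y)-\hat w(u,y)\big)$. The comparison Lemma \eqref{eq:comparison u1 u2 and kappa} (applicable since $\sigma\in\mathcal C^{1}$ with $\sigma'\ge k_1>0$ is in particular continuous and nondecreasing) gives, for every $h$, the pointwise bound $0\le D_h\le\hat\kappa$; together with \eqref{hat kappa estim 2}--\eqref{hat kappa estim} this already furnishes the bound that will pass to the limit, and shows that $\{D_h\}$ is bounded in $L^{\infty}((\mathbb R^{n})^{+})\cap L^{2}_{loc}$. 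Subtracting the weak formulations of $\hat w(u+h,\cdot)$ and $\hat w(u,\cdot)$ and dividing by $h$, one sees that $D_h$ is the weak solution in $\mathbb X$ of the linear exterior problem $-\Delta D_h=0$ in $(\mathbb R^n)^+$, $\partial_\nu D_h=C_0\,\tau_h$ on $G_0$, $\partial_\nu D_h=0$ on $\{y_1=0\}\setminus\overline{G_0}$ and $D_h\to0$ at infinity, where $\tau_h(y):=h^{-1}\big(\sigma(u+h-\hat w(u+h,y))-\sigma(u-\hat w(u,y))\big)$. Since $(u+h-\hat w(u+h,y))-(u-\hat w(u,y))=h\,(1-D_h(y))$, the mean value theorem gives $\tau_h(y)=\sigma'(\xi_h(y))\,(1-D_h(y))$, with $\xi_h(y)$ lying between $u-\hat w(u,y)$ and $u+h-\hat w(u+h,y)$.

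The next step is an $h$-uniform energy estimate. Using $D_h$ itself as test function (licit after the standard density approximation in $\mathbb X$) yields
\begin{equation*}
\int_{(\mathbb R^n)^+}|\nabla D_h|^2\,dy=C_0\int_{G_0}\sigma'(\xi_h(y))\,(1-D_h(y))\,D_h(y)\,dS_y .
\end{equation*}
Because $0\le D_h\le\hat\kappa\le1$ on $G_0$, the integrand is nonnegative and bounded by $\tfrac14\,\sigma'(\xi_h)$; and since $\hat w(u\pm h,\cdot)$ stays in a fixed bounded set (Lemma \ref{existence hat w}) while $\sigma\in\mathcal C^{1}$, one has $\sigma'(\xi_h)\le M$ for a constant $M$ depending only on $u$ and $\sigma$. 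Hence $\|\nabla D_h\|_{L^2((\mathbb R^n)^+)}$ is bounded uniformly in $h$, so $\{D_h\}$ is bounded in $\mathbb X$.

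Consequently there are $h_m\to0$ and $\hat W\in\mathbb X$ with $D_{h_m}\rightharpoonup\hat W$ (gradients weakly in $L^2$), $D_{h_m}\to\hat W$ strongly in $L^2_{loc}(\overline{(\mathbb R^n)^+})$ and, by compactness of the trace operator, strongly in $L^2(G_0)$ and, along a further subsequence, a.e. on $G_0$. By the Lipschitz Lemma $|\hat w(u+h,y)-\hat w(u,y)|\le|h|$, so $\hat w(u+h_m,\cdot)\to\hat w(u,\cdot)$ uniformly, hence $\xi_{h_m}(y)\to u-\hat w(u,y)$ and, by continuity of $\sigma'$, $\sigma'(\xi_{h_m})\to\sigma'(u-\hat w(u,\cdot))$ a.e. on $G_0$ with a uniform $L^\infty(G_0)$ bound. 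Passing to the limit in the weak formulation for $D_{h_m}$ — the Dirichlet term by weak convergence, the boundary term by dominated convergence on $G_0$ using these a.e. convergences together with $0\le D_{h_m}\le\hat\kappa$ — gives exactly \eqref{eq:weak formulation of W}, while $0\le D_{h_m}\le\hat\kappa$ passes to the limit to give $0\le\hat W\le\hat\kappa$. Uniqueness for \eqref{eq:weak formulation of W} follows because the difference $V$ of two solutions satisfies $\int_{(\mathbb R^n)^+}|\nabla V|^2=-C_0\int_{G_0}\sigma'(u-\hat w)V^2\le0$ (as $\sigma'\ge0$), so $\nabla V\equiv0$ and $V\equiv0$ by the decay at infinity. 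Since the limit is thus independent of the chosen subsequence, the whole family satisfies $D_h\to\hat W$ strongly in $L^2_{loc}(K)$ for every smooth bounded $K$ with $G_0\subset K\subset\overline{(\mathbb R^n)^+}$, which is precisely the differentiability of $u\mapsto\hat w(u,\cdot)\in L^2_{loc}(K)$ with $\partial_u\hat w=\hat W$; the claims $\nabla\hat W\in L^2$ and $0\le\partial_u\hat w\le\hat\kappa$ are already contained in the above.

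The principal difficulty is the $h$-uniform energy bound combined with the strong convergence of the boundary traces $D_h|_{G_0}$: everything rests on controlling the incremental quotient $\tau_h$ of $\sigma$ on $G_0$ — its sign by monotonicity ($\sigma'\ge0$) and its size by the local Lipschitz constant of $\sigma$ on the range of $u-\hat w$ — and on compactness of the trace operator, which is what lets one make sense of the nonlinear boundary integral in the limit.
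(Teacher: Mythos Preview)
Your proof is correct and follows essentially the same route as the paper: form the difference quotient $D_h$, use the mean value theorem to rewrite the boundary term as $\sigma'(\xi_h)(1-D_h)$, test with $D_h$ itself to obtain an $h$-uniform energy bound, extract a weak limit, and pass to the limit in the weak formulation, with the pointwise bound $0\le\hat W\le\hat\kappa$ inherited from the comparison inequality \eqref{eq:comparison u1 u2 and kappa}. Your write-up is in fact somewhat more careful than the paper's --- you spell out the uniqueness of the limiting linear problem (so that the full family $D_h$, not just a subsequence, converges) and the role of trace compactness on $G_0$, whereas the paper leaves these implicit; you also bound the right-hand side via $0\le D_h(1-D_h)\le\tfrac14$, while the paper instead moves the $\sigma'(\xi_h)D_h^2$ term to the left and uses $\sigma'\ge k_1$ for coercivity before bounding $\int_{G_0}\sigma'(\xi_h)D_h$.
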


\begin{proof}
    Considering the difference of two solutions
    \begin{align*}
    \int \limits _ {(\mathbb R^n)^+} \nabla \frac{ \hat w (u + h , y) - \hat w (u,y) }{h} \nabla \varphi dy &= C_0 \int \limits _{ G_0 } \frac{ \sigma (u + h - \hat w (u+h,y) ) -  \sigma (u - \hat w (u,y) ) }{h} \varphi dS_y \\
    &= C_ 0\int_{ G_0 } \sigma' (\xi_h (y)) \left (1 - \frac{ \hat w (u + h , y) - \hat w (u , y)   }{h}  \right) \varphi(y) dS_y
    \end{align*}
    for some $\xi_h$ in between $u + h - \hat w (u + h , y)$ and $u - \hat w (u , y) $. From this
    \begin{align} \label{eq:weak formulation u plus h minus u}
    \int \limits _ {(\mathbb R^n)^+} \nabla\frac{ \hat w (u + h , y) - \hat w (u,y) }{h} \nabla\varphi dy& + C_0 \int_{ G_0 } \sigma' (\xi_h (y)) \frac{ \hat w (u + h , y) - \hat w (u , y)   }{h} \varphi(y) dS_y  \nonumber \\
    &= C_0 \int_{G_0} \sigma' (\xi_h (y)) \varphi (y) dS_y.
    \end{align}
    Taking $\varphi = \frac{ \hat w (u + h , y) - \hat w (u,y) }{h} $, and using the fact that $\hat w(u, y)$ can be bounded and $\sigma'$ is continuous
    \begin{equation}
    \left\|  \nabla  \left( \frac{ \hat w (u + h , y) - \hat w (u,y) }{h} \right)  \right\|_{L^2 (\Omega)^n}^2  + k_1 C_0 \left\|  \frac{ \hat w (u + h , y) - \hat w (u,y) }{h}   \right\|_{L^2 (G_0)}^2 \le C.
    \end{equation}
    for $h$ small. Thus, $ \frac{ \hat w (u + h , y) - \hat w (u,y) }{h}$ admits a weak limit as $h\to 0$ in $H^1 (K)$, let it be $\hat W (u,y)$. Thus, up to a subsequence, it admits a pointwise limit and strong limit in $L^2 (K)$. It is clear that
    $$\xi_h (y) \to u - \hat w (u,y), \qquad \textrm{ pointwise as } h \to 0.$$
    By passing to the limit for $\varphi \in \mathcal C_c^\infty (\overline{(\mathbb R^n)^+})$ fixed, we characterize \eqref{eq:weak formulation of W}. From \eqref{eq:comparison u1 u2 and kappa} we deduce that, for $h > 0$
    \begin{equation*}
    0 \le \frac{ \hat w(u+h,y) - \hat w (u, y)}{ h } \le \kappa(y).
    \end{equation*}
    As $h \to 0$ we deduce the result $0 \le \hat W (u,y) \le \kappa(y)$.
\end{proof}

	\begin{remark}
		Notice that $\widehat W$ is the unique solution of
		\begin{equation}
			\begin{dcases}
				-\Delta \widehat W = 0 & y \in (\mathbb R^n)^+, \\
				\partial_\nu \widehat W + C_0 \sigma'(u - \widehat w) \widehat W = C_0 \sigma' (u - \widehat w) u & y \in G_0, \\
				\partial _\nu \widehat W = 0 & y \notin G_0, y_1 = 0, \\
				\widehat W \to 0 & |y| \to +\infty.
			\end{dcases}			
		\end{equation}
	\end{remark}

	\begin{remark}
		\label{rem:linear sigma implies linear w and H}
		Assume $\sigma (u) = \mu u$. Then $\sigma'(u - \widehat w) \equiv \mu$, and $\widehat W$ does not depend on $\mu$. Therefore, $\widehat w(x,u) = u\widehat W(x)$. Furthermore
		\begin{equation}
			H(u) = \lambda_\mu u
		\end{equation}
	\end{remark}

\subsection{On the regularity of function $H$}

\begin{lemma}
\label{lem:H Lipschitz} Let $\sigma $ be a maximal monotone graph. The
function $H_{G_{0}}(u)$ defined by \eqref{H definition} is Lipschitz
continuous nondecreasing of constant $\lambda _{G_{0}}$ given by %
\eqref{eq:lambda defn}, i.e. if $u_{1}>u_{2}$
\begin{equation}
0\leq H(u_{1})-H(u_{2})\leq \lambda _{G_{0}}(u_{1}-u_{2}),
\label{eq:Lipschitz H}
\end{equation}%
i.e., in the notation of weak derivatives
\begin{equation}
0\leq H^{\prime }(u)\leq \lambda _{G_{0}}\text{ \ for a.e. }u\in \mathbb{R}.
\end{equation}
\end{lemma}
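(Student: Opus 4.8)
The plan is to reduce everything to a single ``capacitary'' identity, namely
\[
H_{G_0}(u)=\int_{G_0}\hat w(y';u)\,\partial_\nu\hat\kappa(y')\,dy',
\]
after which both the monotonicity and the Lipschitz bound with constant $\lambda_{G_0}$ will fall out of the comparison estimate \eqref{eq:comparison u1 u2 and kappa} together with the sign $\partial_\nu\hat\kappa>0$ already noted above.

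To produce the identity I would integrate by parts. Since $\hat w(\cdot;u)$ and $\hat\kappa$ are both harmonic in $(\mathbb R^n)^+$, Green's second identity on $(\mathbb R^n)^+\cap\{|y|<R\}$ gives, after letting $R\to\infty$, that $\int_{\{y_1=0\}}\big(\hat\kappa\,\partial_\nu\hat w-\hat w\,\partial_\nu\hat\kappa\big)\,dy'=0$; the flux over $\{|y|=R\}$ disappears because of the decay $|v|\le K|y|^{2-n}$ together with the gradient bound $|\nabla v|\le K|y|^{1-n}$ (the latter from interior elliptic estimates for harmonic functions), both being consequences of Lemmas \ref{existence hat kappa} and \ref{existence hat w}. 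On $\{y_1=0\}$ the normal derivatives of $\hat w$ and $\hat\kappa$ vanish off $\overline{G_0}$, while $\hat\kappa\equiv1$ on $G_0$, so the surface integral collapses to $\int_{G_0}\big(\partial_\nu\hat w-\hat w\,\partial_\nu\hat\kappa\big)\,dy'=0$; recalling $H_{G_0}(u)=\int_{G_0}\partial_\nu\hat w(u,y')\,dy'$ from \eqref{H definition}, this is exactly the claimed identity (equivalently, $H_{G_0}(u)=\int_{(\mathbb R^n)^+}\nabla\hat w(\cdot;u)\cdot\nabla\hat\kappa$).

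With the identity in hand, for $u_1>u_2$ subtract to get $H_{G_0}(u_1)-H_{G_0}(u_2)=\int_{G_0}\big(\hat w(y';u_1)-\hat w(y';u_2)\big)\,\partial_\nu\hat\kappa(y')\,dy'$. By \eqref{eq:comparison u1 u2 and kappa}, $0\le\hat w(y';u_1)-\hat w(y';u_2)\le(u_1-u_2)\hat\kappa(y')$, which on $G_0$ (where $\hat\kappa\equiv1$) reads $0\le\hat w(y';u_1)-\hat w(y';u_2)\le u_1-u_2$. Since $\partial_\nu\hat\kappa>0$ on $G_0$ and $\int_{G_0}\partial_\nu\hat\kappa\,dy'=\lambda_{G_0}$ by \eqref{eq:lambda defn}, this gives $0\le H_{G_0}(u_1)-H_{G_0}(u_2)\le\lambda_{G_0}(u_1-u_2)$, i.e.\ \eqref{eq:Lipschitz H}; being Lipschitz and nondecreasing, $H_{G_0}$ is then differentiable a.e.\ with $0\le H_{G_0}'\le\lambda_{G_0}$. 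The argument applies directly when $\sigma$ is the graph of a continuous nondecreasing function (the case in which \eqref{eq:comparison u1 u2 and kappa} was established); for a genuinely multivalued maximal monotone $\sigma$ one first runs it for a continuous nondecreasing mollification $\sigma_m\to\sigma$ and passes to the limit using the uniform bound $|\hat w_m(\cdot;u)|\le|u|\hat\kappa$ of Lemma \ref{existence hat w}.

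The delicate point is the integration by parts itself, in an unbounded domain with mixed (Zaremba-type) conditions on $\{y_1=0\}$: one must check that the flux at infinity truly vanishes and that $\partial_\nu\hat\kappa$ and $\partial_\nu\hat w(\cdot;u)$ are bona fide $L^1(G_0)$ traces, in particular controlling the mild singularity of $\nabla\hat\kappa$ and $\nabla\hat w$ near $\partial G_0$. This is handled by the decay estimates of Lemmas \ref{existence hat kappa}--\ref{existence hat w} and the boundary regularity for such exterior problems established as in \cite{DiGoShZu Aux Shape}; once the identity $H_{G_0}(u)=\int_{G_0}\hat w(y';u)\,\partial_\nu\hat\kappa\,dy'$ is secured, the remaining steps are elementary.
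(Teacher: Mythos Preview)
Your proof is correct and takes a genuinely different route from the paper's. The paper first restricts to smooth $\sigma$ with $\sigma'\ge k_1>0$, invokes the differentiable-dependence lemma to write $H'(u)=C_0\int_{G_0}\sigma'(u-\hat w)(1-\hat W)\,dy'$, and then uses $\hat\kappa$ as a test function in the weak formulation of $\hat W=\partial_u\hat w$ to obtain $H'(u)=\int_{G_0}\hat W\,\partial_\nu\hat\kappa\,dy'\le\lambda_{G_0}$; the general case follows by approximation. You instead establish the \emph{undifferentiated} identity $H_{G_0}(u)=\int_{G_0}\hat w(\cdot;u)\,\partial_\nu\hat\kappa\,dy'$ directly via Green's formula, and then bound finite differences using the comparison estimate \eqref{eq:comparison u1 u2 and kappa}. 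Your version is the antiderivative of the paper's, so the two are formally equivalent; but yours is more economical, since it bypasses the differentiable-dependence lemma entirely and needs only the comparison lemma for continuous $\sigma$, with the same approximation step at the end for general maximal monotone graphs. The paper's route, on the other hand, makes the connection to the linearized auxiliary problem for $\hat W$ explicit, which may be of independent interest. Your caveat about the integrability of $\partial_\nu\hat\kappa$ and $\partial_\nu\hat w$ on $G_0$ (and the vanishing of the flux at infinity) is well placed; the paper glosses over the same issue when it writes $\int_{(\mathbb R^n)^+}\nabla\hat W\cdot\nabla\hat\kappa=\int_{G_0}\hat W\,\partial_\nu\hat\kappa$.
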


\begin{remark}
Notice that $\hat{\kappa}$ (and thus $\lambda _{G_{0}}$) does not depend on $%
\sigma $, but only on $G_{0}$.
\end{remark}

\begin{proof}
        First, let $\sigma$ be smooth and $\sigma' \ge k_1> 0$. Again, let $\hat W = \frac{\partial \hat w}{\partial u}$. Taking derivatives in \eqref{H definition} we have that
        \begin{equation}
        H'(u) = C_0 \int_{G_0} \sigma' (u - \widehat w) (1 - \hat W)
        dy'.
        \end{equation}
        Since $\hat W \le 1$ we have that $H' \ge 0$.
        Using $\hat \kappa$ as a test function in \eqref{eq:weak formulation of W}  we obtain that
        \begin{align*}
        H'(u) &= C_0 \int_{G_0} \sigma'(u - \hat w ) ( 1 - \hat W) dy' = \int_{G_0} \Big ( \sigma'(u - \hat w ) ( 1 - \hat W) \Big) \hat \kappa dy'\\
        & = \int_{ (\mathbb R^n)^+ } \nabla \hat W \nabla \hat \kappa dy= \int_{G_0} \hat W (\partial_\nu \hat \kappa) dy' \le \int_{G_0} (\partial _\nu \hat \kappa ) dy' = \lambda
        \end{align*}
        using the facts that $\partial_\nu \hat \kappa \ge 0$ and $0 \le \widehat W \le 1$.
        \newline
        If $\sigma$ is a general maximal monotone graph, estimate \eqref{eq:Lipschitz H} is maintained by approximation by a smooth sequence of function $\sigma_k$.
    \end{proof}

\section{Convergence of the boundary integrals where $u_\protect\varepsilon %
\ge 0$}

\subsection{On the auxiliary function $w_\protect\varepsilon^j$}

We introduce a function $w^j_\varepsilon(u, x)$ as a solution of the
boundary value problem
\begin{equation}  \label{w prob}
\left\{
\begin{array}{ll}
\Delta w^j_\varepsilon = 0, & x \in (T^{j}_{\varepsilon/4})^+, \\
\partial_\nu w^j_\varepsilon = \varepsilon^{-k}\sigma(u - w^j_\varepsilon),
& x \in G^j_\varepsilon, \\
{\ \partial_\nu w^j_\varepsilon = 0 } & {\ x \in ( T_{\varepsilon/4}^j)^0
\setminus \overline { G^j _\varepsilon }, } \\
w^j_\varepsilon = 0, & x \in {\ (\partial T^j_{\varepsilon/4})^+ },%
\end{array}
\right.
\end{equation}
where $u\in\mathbb{R}$ is a parameter. {\ We will compare this auxiliary
functions with the functions
\begin{equation}
\hat w_\varepsilon^j (u,x)= \hat w\left (u, \frac{x - P^j_\varepsilon}{%
a_\varepsilon} \right)
\end{equation}
}

The function $w^j_\varepsilon\in H^1_0(T^j_{\varepsilon/4})$ is a weak
solution of the problem \eqref{w prob} if it satisfies the integral identity
\begin{equation*}
\int\limits_{{\ (T^{j}_{\varepsilon/4})^+ }}\nabla w^j_\varepsilon \nabla\varphi dx -
\varepsilon^{-k}\int\limits_{G^j_\varepsilon}\sigma(u -
w^j_\varepsilon)\varphi dx^{\prime }= 0,
\end{equation*}
for an arbitrary function $\varphi\in H^1_0(T^j_{\varepsilon/4})$.

{From the uniqueness of problem \eqref{w prob} and the method of sub and
supersolutions, we have the following:}

\begin{lemma}
\label{aux fun w estim parameter} The function $w^j_\varepsilon$ satisfies
following estimations

\begin{enumerate}
\item if $u \ge 0$ then $0 \le w^j_\varepsilon(u, x) \le {\hat
w_\varepsilon^j (u,x) \le u}$,

\item if $u \le 0$ then $u \le {\hat w_\varepsilon^j (u,x) \le }
w^j_\varepsilon(u, x) \le 0$.
\end{enumerate}
\end{lemma}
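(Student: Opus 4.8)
The plan is to reduce the estimate to two comparisons against explicit barriers, exploiting the fact that the rescaled exterior solution $\hat w^j_\varepsilon$ solves \emph{the same} equation and \emph{the same} nonlinear boundary condition on $G^j_\varepsilon$ as $w^j_\varepsilon$, and differs from it only on the spherical portion of the boundary.

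First I would record the scaling identity. Writing $y=(x-P^j_\varepsilon)/a_\varepsilon$ and using $a_\varepsilon=C_0\varepsilon^k$, one has $\partial_{\nu_x}=a_\varepsilon^{-1}\partial_{\nu_y}$ and $a_\varepsilon^{-1}C_0=\varepsilon^{-k}$, so that $\hat w^j_\varepsilon(u,\cdot)$ is harmonic in $(T^j_{\varepsilon/4})^+$, satisfies $\partial_\nu\hat w^j_\varepsilon=\varepsilon^{-k}\sigma(u-\hat w^j_\varepsilon)$ on $G^j_\varepsilon$ and $\partial_\nu\hat w^j_\varepsilon=0$ on $(T^j_{\varepsilon/4})^0\setminus\overline{G^j_\varepsilon}$; that is, it satisfies every line of \eqref{w prob} except the homogeneous Dirichlet condition on $(\partial T^j_{\varepsilon/4})^+$, where instead, by Lemma~\ref{existence hat w} read in the $y$-variable, $0\le\hat w^j_\varepsilon(u,\cdot)\le u$ when $u\ge0$ and $u\le\hat w^j_\varepsilon(u,\cdot)\le0$ when $u\le0$. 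In particular $\hat w^j_\varepsilon$ is bounded (by $|u|$) and belongs to $H^1((T^j_{\varepsilon/4})^+)$, hence is an admissible competitor.

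Next, for $u\ge0$, the constant $v\equiv0$ is a subsolution of \eqref{w prob}: it is harmonic, $\partial_\nu0=0\le\varepsilon^{-k}\sigma(u)$ on $G^j_\varepsilon$ because $\sigma$ is nondecreasing with $\sigma(0)=0$, it has vanishing normal derivative on the remaining flat boundary, and it vanishes on $(\partial T^j_{\varepsilon/4})^+$; while, by the previous paragraph, $\hat w^j_\varepsilon$ is a supersolution (it fulfils all the flux conditions with equality and dominates the Dirichlet datum $0$ on $(\partial T^j_{\varepsilon/4})^+$). The comparison principle for \eqref{w prob} then gives $0\le w^j_\varepsilon(u,x)\le\hat w^j_\varepsilon(u,x)\le u$, which is assertion~1. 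For $u\le0$ the two barriers swap roles: $v\equiv0$ is now a supersolution ($\partial_\nu0=0\ge\varepsilon^{-k}\sigma(u)$), and $\hat w^j_\varepsilon\le0$ on $(\partial T^j_{\varepsilon/4})^+$ makes it a subsolution, whence $u\le\hat w^j_\varepsilon(u,x)\le w^j_\varepsilon(u,x)\le0$, which is assertion~2.

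Finally, the comparison principle invoked above is proved by the same test-function scheme already used for the Lipschitz dependence of $\hat w$ on $u$. If $d$ denotes the difference of the two functions being compared (for instance $d=w^j_\varepsilon-\hat w^j_\varepsilon$ for the upper bound when $u\ge0$), then $d$ is harmonic in $(T^j_{\varepsilon/4})^+$, the truncation $d^+$ vanishes on $(\partial T^j_{\varepsilon/4})^+$ and is therefore admissible in the weak formulation, and testing the equation for $d$ against it leaves only a boundary term,
\[
\int_{(T^j_{\varepsilon/4})^+}|\nabla d^+|^2\,dx=\varepsilon^{-k}\int_{G^j_\varepsilon}\bigl(\sigma(u-w^j_\varepsilon)-\sigma(u-\hat w^j_\varepsilon)\bigr)\,d^+\,dx',
\]
whose right-hand side is $\le0$ by monotonicity of $\sigma$ (on $\{d>0\}$ one has $u-w^j_\varepsilon<u-\hat w^j_\varepsilon$, and on $\{d\le0\}$ the factor $d^+$ vanishes); hence $\nabla d^+\equiv0$, and since $d^+$ has zero trace on the half-sphere $(\partial T^j_{\varepsilon/4})^+$, which has positive $(n-1)$-dimensional measure, $d^+\equiv0$. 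The analogous computation with $d^-$, and with $d$ taken to be the appropriate difference in each case, settles the remaining inequalities. I expect the only genuinely delicate point to be keeping track of the sign of this boundary flux together with the admissibility of the truncations in the mixed Dirichlet--Neumann function space of the half-ball; everything else is routine.
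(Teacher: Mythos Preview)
Your proof is correct and follows exactly the approach the paper indicates: the paper only writes ``From the uniqueness of problem \eqref{w prob} and the method of sub and supersolutions'' without further detail, and you have carefully supplied the scaling identity (so that $\hat w^j_\varepsilon$ satisfies the same nonlinear Robin condition as $w^j_\varepsilon$), identified $0$ and $\hat w^j_\varepsilon$ as the relevant barriers, and verified the comparison principle via the standard truncation argument. There is nothing to add.
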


{\ }

\begin{remark}
Since $\sigma$ is monotone, from the previous result,
\begin{equation}  \label{eq:estimate sigma u - w}
|\sigma(u - w_\varepsilon^j) | \le |\sigma(u)|
\end{equation}
\end{remark}

We define the function
\begin{equation}  \label{W def}
W_\varepsilon = \begin{dcases} w^j_\ee(u,x), & x\in {(T^{j}_{\ee/4})^+},\,
j\in \Upsilon_\ee\\ 0, &
x\in{(\mathbb{R}^n)^+\setminus\bigcup\limits_{j\in\Upsilon_\ee}%
\overline{(T^j_{\ee/4})^+}}. \end{dcases}
\end{equation}
Notice that $W_\varepsilon \in H^1(\Omega, \Gamma_2)$ for all $u\in \mathbb{R%
}$. The following Lemma proposes estimate of the introduced function and its
gradient

\begin{lemma}
\label{lemma W estim} The following estimations for the function $%
W_\varepsilon$, that was defined in \eqref{W def}, are valid
\begin{align}
\Vert \nabla W_\varepsilon \Vert^2_{L_2(\Omega)} &\le K {|u||\sigma(u)|},
\notag \\
\Vert W_\varepsilon\Vert^2_{L_2(\Omega)} &\le K\varepsilon^2{|u||\sigma(u)|},
\label{W estim}
\end{align}
\end{lemma}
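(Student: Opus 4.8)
The plan is to test the weak formulation of \eqref{w prob} against $w^j_\varepsilon$ itself on each cell $(T^j_{\varepsilon/4})^+$, sum over $j \in \Upsilon_\varepsilon$, and then control the boundary term using the pointwise bound from Lemma \ref{aux fun w estim parameter} together with the monotonicity of $\sigma$. Since $W_\varepsilon$ vanishes outside $\bigcup_j \overline{(T^j_{\varepsilon/4})^+}$ and matches $w^j_\varepsilon$ inside, and $w^j_\varepsilon \in H^1_0(T^j_{\varepsilon/4})$ (so the pieces glue in $H^1$), it suffices to work cell-by-cell. First I would write, on a single cell,
\begin{equation*}
\int_{(T^j_{\varepsilon/4})^+} |\nabla w^j_\varepsilon|^2\,dx = \varepsilon^{-k}\int_{G^j_\varepsilon} \sigma(u - w^j_\varepsilon)\, w^j_\varepsilon\, dx'.
\end{equation*}
By Lemma \ref{aux fun w estim parameter}, $w^j_\varepsilon$ lies between $0$ and $u$ (same sign as $u$), so $|w^j_\varepsilon| \le |u|$, and by \eqref{eq:estimate sigma u - w} we have $|\sigma(u - w^j_\varepsilon)| \le |\sigma(u)|$. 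Hence the right-hand side is bounded by $\varepsilon^{-k}|u||\sigma(u)|\,|G^j_\varepsilon|$.

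Next I would count scales. We have $|G^j_\varepsilon| = a_\varepsilon^{n-1}|G_0| = C_0^{n-1}\varepsilon^{k(n-1)}|G_0|$ with $k = \frac{n-1}{n-2}$, so $k(n-1) = \frac{(n-1)^2}{n-2} = (n-1) + \frac{n-1}{n-2} = (n-1) + k$; therefore $\varepsilon^{-k}|G^j_\varepsilon| = C_0^{n-1}|G_0|\,\varepsilon^{n-1}$. Summing over $j \in \Upsilon_\varepsilon$ and using $|\Upsilon_\varepsilon| \cong d\,\varepsilon^{1-n}$ gives
\begin{equation*}
\|\nabla W_\varepsilon\|_{L_2(\Omega)}^2 = \sum_{j \in \Upsilon_\varepsilon} \int_{(T^j_{\varepsilon/4})^+} |\nabla w^j_\varepsilon|^2\,dx \le |u||\sigma(u)| \sum_{j\in\Upsilon_\varepsilon} C_0^{n-1}|G_0|\,\varepsilon^{n-1} \le K|u||\sigma(u)|,
\end{equation*}
which is the first estimate.

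For the $L_2$-estimate I would use a Poincaré (Friedrichs) inequality on each half-ball $(T^j_{\varepsilon/4})^+$: since $w^j_\varepsilon = 0$ on $(\partial T^j_{\varepsilon/4})^+$, one has $\|w^j_\varepsilon\|_{L_2((T^j_{\varepsilon/4})^+)}^2 \le C\varepsilon^2 \|\nabla w^j_\varepsilon\|_{L_2((T^j_{\varepsilon/4})^+)}^2$, with the constant scaling like the square of the cell size $\varepsilon/4$ and independent of $j$ by translation invariance. Summing over $j$ and invoking the gradient bound just proved yields $\|W_\varepsilon\|_{L_2(\Omega)}^2 \le K\varepsilon^2 |u||\sigma(u)|$. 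The only mildly delicate point is making the Poincaré constant genuinely uniform in $\varepsilon$ and $j$: this follows by rescaling $(T^j_{\varepsilon/4})^+$ to a fixed half-ball, where the Dirichlet-type Poincaré constant is a fixed geometric quantity, and then tracking the factor $\varepsilon^2$ from the change of variables. I expect this scale-counting — verifying $\varepsilon^{-k}|G^j_\varepsilon|\cdot|\Upsilon_\varepsilon| = O(1)$, which is precisely the signature of the critical regime — to be the conceptual heart of the argument, while the rest is routine.
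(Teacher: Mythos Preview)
Your proof is correct and follows essentially the same route as the paper: test \eqref{w prob} with $w^j_\varepsilon$, bound the boundary term via $|w^j_\varepsilon|\le |u|$ and $|\sigma(u-w^j_\varepsilon)|\le|\sigma(u)|$, count scales to get $\varepsilon^{-k}|G^j_\varepsilon|\cdot|\Upsilon_\varepsilon|=O(1)$, and finish with the scaled Friedrichs inequality on each half-ball. The only cosmetic difference is that the paper first rewrites $\sigma(u-w^j_\varepsilon)w^j_\varepsilon = \sigma(u-w^j_\varepsilon)u - \sigma(u-w^j_\varepsilon)(u-w^j_\varepsilon)$ and drops the second (nonnegative) term by monotonicity, whereas you bound the product directly; your version is slightly cleaner.
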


\begin{proof}
    From the weak formulation of problem \eqref{w prob} for
    $j\in\Upsilon_\ee$ we know
    \begin{equation*}
    \int\limits_{{(T^j_{\ee/4})^+}}\nabla w^j_\ee \nabla\varphi dx - \ee^{-k}\int\limits_{G^j_\ee}\sigma(u - w^j_\ee)\varphi dx'  = 0.
    \end{equation*}
    We take $\varphi = w^j_\ee$ as a test function in this expression and obtain
    \begin{equation*}
    \int\limits_{{(T^j_{\ee/4})^+}}|\nabla w^j_\ee|^2 dx - \ee^{-k}\int\limits_{G^j_\ee}\sigma(u - w^j_\ee)w^j_\ee dx'  = 0.
    \end{equation*}
    Then we can transform the obtained relation to the following
    expression
    \begin{equation*}
    \int\limits_{{(T^j_{\ee/4})^+}}|\nabla w^j_\ee|^2 dx + \ee^{-k}\int\limits_{G^j_\ee}\sigma(u - w^j_\ee)(u - w^j_\ee) dx'  = \ee^{-k}\int\limits_{G^j_\ee}\sigma(u - w^j_\ee)u
    dx'.
    \end{equation*}
    By using the monotonicity of $\sigma(u)$ we derive the following inequality
    \begin{gather*}
    \int\limits_{{(T^j_{\ee/4})^+}}|\nabla w^j_\ee|^2 dx + \ee^{-k}\int\limits_{G^j_\ee}\sigma(u - w^j_\ee)(u - w^j_\ee) dx'  \le \ee^{-k}\int\limits_{G^j_\ee}|\sigma(u - w^j_\ee)||u|
    dx'.
    \end{gather*}
    {
        Due to the monotonicity $\sigma$ and \eqref{eq:estimate sigma u - w} we have that
        \begin{gather*}
        \Vert\nabla w^j_\ee\Vert^2_{L_2({(T^j_{\ee/4})^+})}  \le \ee^{-k}\int\limits_{G^j_\ee}|\sigma(u - w^j_\ee)||u| dx'
        \le { k_2 |u| |\sigma(u)| \ee^{-k}|G^j_\ee| }
        \end{gather*}}
    Hence, the following estimate is valid
    \begin{equation*}
    \Vert\nabla w^j_\ee\Vert^2_{L_2({(T^j_{\ee/4})^+})} \le K |u| { |\sigma(u)| }
    \ee^{n-1}.
    \end{equation*}
    Adding over all cells we get
    \begin{equation*}
    \Vert\nabla W_\ee\Vert^2_{L_2(\Omega)}  \le K {|u||\sigma(u)|}.
    \end{equation*}
    Friedrich's inequality implies
    \begin{equation*}
    \Vert w^j_\ee\Vert^2_{L_2({(T^j_{\ee/4})^+})} \le K\ee^2\Vert\nabla
    w^j_\ee\Vert^2_{L_2({(T^j_{\ee/4})^+})}.
    \end{equation*}
    Summing over all cells and using obtained estimations we derive
    \begin{equation*}
    \Vert W_\ee\Vert^2_{L_2(\Omega)} \le K\ee^2\Vert\nabla W_\ee\Vert^2_{L_2(\Omega)} \le K \ee^2
    {|u||\sigma(u)|},
    \end{equation*}
    which concludes the proof.
\end{proof}

Hence, as $\varepsilon\rightarrow0$ we have
\begin{equation}  \label{W conv}
\begin{split}
W_\varepsilon \rightharpoonup 0 \mbox{ weakly in } H^1(\Omega), \\
W_\varepsilon \rightarrow 0 \mbox{ strongly in } L_2(\Omega).
\end{split}%
\end{equation}

\subsection{The comparison between $w_\protect\varepsilon^j$ and $\hat w_%
\protect\varepsilon^j$}

{As an immediate consequence of Lemma \ref{aux fun w estim parameter} we
have that}

\begin{lemma}
\label{lemma estim of w by hat w} For all $u\in\mathbb{R}$ and a.e. $x\in {%
(T^j_{\varepsilon/4})^+}$ we have
\begin{equation}  \label{estim of w by hat w}
|w^j_\varepsilon(u, x)| \le \left| {\hat w _\varepsilon^ j (u,x)} \right|
\end{equation}
\end{lemma}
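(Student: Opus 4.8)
The claim is an immediate consequence of Lemma~\ref{aux fun w estim parameter}, and the plan is to obtain it by a straightforward sign analysis on the parameter $u$. Recall that Lemma~\ref{aux fun w estim parameter} already establishes not only an ordering between $w^j_\varepsilon(u,\cdot)$ and $\hat w^j_\varepsilon(u,\cdot)$ but also the sign of each of them, determined by the sign of $u$; the point of the present lemma is simply that this information passes to the absolute values.

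First I would treat the case $u\ge 0$. Here Lemma~\ref{aux fun w estim parameter} gives $0 \le w^j_\varepsilon(u,x) \le \hat w^j_\varepsilon(u,x)$ for a.e.\ $x\in (T^j_{\varepsilon/4})^+$, so both quantities are nonnegative and $|w^j_\varepsilon(u,x)| = w^j_\varepsilon(u,x) \le \hat w^j_\varepsilon(u,x) = |\hat w^j_\varepsilon(u,x)|$. Next, for $u\le 0$, the same lemma gives $\hat w^j_\varepsilon(u,x) \le w^j_\varepsilon(u,x) \le 0$, hence both are nonpositive and $|w^j_\varepsilon(u,x)| = -w^j_\varepsilon(u,x) \le -\hat w^j_\varepsilon(u,x) = |\hat w^j_\varepsilon(u,x)|$. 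Since these two cases exhaust $u\in\mathbb{R}$, the estimate \eqref{estim of w by hat w} follows.

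There is no real obstacle at this step: the substantive work — comparing the solution $w^j_\varepsilon$ of the bounded-cell problem \eqref{w prob} (with homogeneous Dirichlet data on $(\partial T^j_{\varepsilon/4})^+$) against the rescaled exterior profile $\hat w^j_\varepsilon$ — was already carried out in Lemma~\ref{aux fun w estim parameter} via the method of sub- and supersolutions, using the monotonicity of $\sigma$ together with the fact that $\hat w^j_\varepsilon$ satisfies the same Robin-type condition $\partial_\nu \hat w^j_\varepsilon = \varepsilon^{-k}\sigma(u-\hat w^j_\varepsilon)$ on $G^j_\varepsilon$ while having the favorable sign on the outer sphere $(\partial T^j_{\varepsilon/4})^+$. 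The only thing to verify here is the elementary observation that, once two functions share a common sign on a set, an inequality between them is equivalent to the corresponding inequality between their moduli. This completes the proof.
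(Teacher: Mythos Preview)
Your proof is correct and matches the paper's approach exactly: the paper also presents this lemma as an immediate consequence of Lemma~\ref{aux fun w estim parameter}, and your case split on the sign of $u$ is precisely the elementary observation needed to pass from the ordered inequalities there to the absolute-value estimate here.
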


The following Lemma gives an estimate of proximity of functions $%
w^j_\varepsilon$ and $\hat w$.

\begin{lemma}
\label{lemma w and hat w} For the introduced functions $w_{\varepsilon
}^{j}(u,x)$ and $\hat{w}(y;u)$ following estimations hold
\begin{align}
\Vert \nabla (v_{\varepsilon }^{j}(u,x))\Vert _{L_{2}({(T_{\varepsilon
/4}^{j})^{+}})}^{2}& \leq K{|u|^{2}}\varepsilon ^{n},
\label{w and hat w estim 2} \\
\Vert v_{\varepsilon }^{j}(u,x)\Vert _{L_{2}({(T_{\varepsilon /4}^{j})^{+}}%
)}^{2}& \leq K{|u|^{2}}\varepsilon ^{n+2},
\end{align}%
where $v_{\varepsilon }^{j}(u,x)=w_{\varepsilon }^{j}(u,x)-{\hat{w}%
_{\varepsilon }^{j}(u,x)}$.
\end{lemma}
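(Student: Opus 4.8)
The plan is to set $v^j_\varepsilon := w^j_\varepsilon - \hat w^j_\varepsilon$ (all functions taken at the parameter $u$), identify the boundary value problem it solves, and run a scaled energy estimate. First I observe that $v^j_\varepsilon \in H^1((T^j_{\varepsilon/4})^+)$ is harmonic in $(T^j_{\varepsilon/4})^+$, satisfies $\partial_\nu v^j_\varepsilon = 0$ on $(T^j_{\varepsilon/4})^0 \setminus \overline{G^j_\varepsilon}$, and equals $-\hat w^j_\varepsilon$ on the spherical cap $(\partial T^j_{\varepsilon/4})^+$ (since $w^j_\varepsilon$ vanishes there). The crucial algebraic point is that restricting problem \eqref{hat w prob} to the half-ball $\{y_1>0,\ |y|<\varepsilon/(4a_\varepsilon)\}$ and undoing the change of variables $y=(x-P^j_\varepsilon)/a_\varepsilon$ gives, for every $\varphi\in H^1((T^j_{\varepsilon/4})^+)$ vanishing on $(\partial T^j_{\varepsilon/4})^+$,
\begin{equation*}
\int\limits_{(T^j_{\varepsilon/4})^+} \nabla \hat w^j_\varepsilon \cdot \nabla \varphi \, dx \;=\; \frac{C_0}{a_\varepsilon} \int\limits_{G^j_\varepsilon} \sigma(u - \hat w^j_\varepsilon)\, \varphi \, dx' \;=\; \varepsilon^{-k} \int\limits_{G^j_\varepsilon} \sigma(u - \hat w^j_\varepsilon)\, \varphi \, dx',
\end{equation*}
the last equality being exactly the choice $a_\varepsilon = C_0 \varepsilon^k$. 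Subtracting this from the weak formulation of \eqref{w prob} yields
\begin{equation*}
\int\limits_{(T^j_{\varepsilon/4})^+} \nabla v^j_\varepsilon \cdot \nabla \varphi \, dx \;=\; \varepsilon^{-k} \int\limits_{G^j_\varepsilon} \bigl( \sigma(u - w^j_\varepsilon) - \sigma(u - \hat w^j_\varepsilon) \bigr)\, \varphi \, dx',
\end{equation*}
for the same class of test functions.

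Since $v^j_\varepsilon$ is not itself admissible (it does not vanish on the cap), I take a cut-off $\phi^j_\varepsilon$ with $\phi^j_\varepsilon \equiv 0$ near $\overline{G^j_\varepsilon}$, $\phi^j_\varepsilon \equiv 1$ near $(\partial T^j_{\varepsilon/4})^+$, $\operatorname{supp}\phi^j_\varepsilon \subset \{|x - P^j_\varepsilon| \ge \varepsilon/8\}$ and $|\nabla \phi^j_\varepsilon| \le C/\varepsilon$ (a fixed profile rescaled by $\varepsilon$), and use $\varphi := v^j_\varepsilon + \hat w^j_\varepsilon \phi^j_\varepsilon$, which does vanish on the cap. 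Because $\phi^j_\varepsilon = 0$ on $G^j_\varepsilon$, the boundary integral collapses to $\varepsilon^{-k}\int_{G^j_\varepsilon} (\sigma(u - w^j_\varepsilon) - \sigma(u - \hat w^j_\varepsilon)) v^j_\varepsilon\, dx'$; on $G^j_\varepsilon$ one has $v^j_\varepsilon = (u - \hat w^j_\varepsilon) - (u - w^j_\varepsilon)$, so monotonicity of $\sigma$ makes this integral $\le 0$. Hence
\begin{equation*}
\| \nabla v^j_\varepsilon \|_{L_2((T^j_{\varepsilon/4})^+)}^2 \;\le\; - \int\limits_{(T^j_{\varepsilon/4})^+} \nabla v^j_\varepsilon \cdot \nabla (\hat w^j_\varepsilon \phi^j_\varepsilon) \, dx \;\le\; \| \nabla v^j_\varepsilon \|_{L_2((T^j_{\varepsilon/4})^+)} \, \| \nabla (\hat w^j_\varepsilon \phi^j_\varepsilon) \|_{L_2((T^j_{\varepsilon/4})^+)},
\end{equation*}
so everything reduces to estimating $\| \nabla (\hat w^j_\varepsilon \phi^j_\varepsilon) \|_{L_2}$.

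This is the technical core. Writing $\nabla(\hat w^j_\varepsilon \phi^j_\varepsilon) = \phi^j_\varepsilon \nabla \hat w^j_\varepsilon + \hat w^j_\varepsilon \nabla \phi^j_\varepsilon$, both terms live in $\{|x - P^j_\varepsilon| \gtrsim \varepsilon\}$, i.e.\ $\{|y| \gtrsim \varepsilon/a_\varepsilon\}$ after rescaling. There I use $|\hat w(u,y)| \le |u| \hat\kappa(y) \le K|u| |y|^{-(n-2)}$ (Lemma \ref{existence hat w} and \eqref{hat kappa estim}) together with the gradient decay $|\nabla_y \hat w(u,y)| \le K|u| |y|^{-(n-1)}$, which follows by reflecting $\hat w$ evenly across $\{y_1 = 0\}$ away from $\overline{G_0}$ (where it becomes harmonic) and applying the interior gradient estimate for harmonic functions; in particular $\int_{\{|y| \ge R\}} |\nabla_y \hat w|^2 \, dy \le K|u|^2 R^{-(n-2)}$, where $n \ge 3$ guarantees convergence of $\int_R^\infty r^{-(n-1)}\,dr$. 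Changing variables back and using $k = \tfrac{n-1}{n-2}$, so that $a_\varepsilon^{n-2} = C_0^{n-2}\varepsilon^{n-1}$ and $(a_\varepsilon/\varepsilon)^{n-2} = C_0^{n-2}\varepsilon$, one finds that both contributions are of order $|u|^2\varepsilon^n$; this gives the first assertion. For the $L_2$ bound, write $v^j_\varepsilon = \varphi - \hat w^j_\varepsilon \phi^j_\varepsilon$ with $\varphi$ vanishing on the cap, apply the Friedrichs inequality $\|\varphi\|_{L_2((T^j_{\varepsilon/4})^+)} \le K\varepsilon\, \|\nabla\varphi\|_{L_2((T^j_{\varepsilon/4})^+)}$ (uniform constant by scaling the fixed half-ball, the cap carrying a fixed fraction of the boundary), and bound $\|\hat w^j_\varepsilon \phi^j_\varepsilon\|_{L_2}^2 \le K|u|^2\varepsilon^2 \cdot \varepsilon^n$ from the pointwise bound on $\operatorname{supp}\phi^j_\varepsilon$; together these give $\|v^j_\varepsilon\|_{L_2}^2 \le K|u|^2\varepsilon^{n+2}$.

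I expect the main obstacle to be the gradient decay estimate for $\hat w$ and, above all, the bookkeeping of powers of $a_\varepsilon$ and $\varepsilon$ afterwards: it is precisely the critical exponent $k = \tfrac{n-1}{n-2}$ that converts the decay of $\hat w$ at distance $|y| \sim \varepsilon/a_\varepsilon$ (the location of the cap) into the extra factor $\varepsilon$ producing $\varepsilon^n$; replacing the decay by the crude bound $|\hat w| \le |u|$ would destroy the estimate.
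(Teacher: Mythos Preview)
Your proof is correct. The paper proceeds somewhat differently: rather than lifting the Dirichlet datum on the cap with a cutoff, it tests the equation for $v^j_\varepsilon$ directly with $v^j_\varepsilon$ itself, which produces the boundary term $-\int_{(\partial T^j_{\varepsilon/4})^+}\partial_\nu v^j_\varepsilon\,\hat w^j_\varepsilon\,ds$ that must then be estimated. To do so the paper invokes the comparison principle $|v^j_\varepsilon|\le|\hat w^j_\varepsilon|$, extends $v^j_\varepsilon$ by even reflection to a full annulus where it is harmonic, derives the pointwise bound $|\nabla v^j_\varepsilon|\le K|u|$ on the inner sphere $\partial T^j_{\varepsilon/8}$ via the mean-value/interior-gradient estimate, and splits the cap integral into an annular volume term plus an inner-sphere boundary term. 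Your cutoff argument sidesteps both the comparison principle for $v^j_\varepsilon$ and any pointwise control of $\nabla v^j_\varepsilon$: everything is reduced to the decay of $\hat w$ alone (value and gradient), obtained by the same reflection trick. The two routes use identical ingredients --- monotonicity of $\sigma$, the bound $|\hat w|\le K|u||y|^{2-n}$, and the critical exponent giving $(a_\varepsilon/\varepsilon)^{n-2}\sim\varepsilon$ --- and land on the same $\varepsilon^n$; yours is slightly more economical in that it needs no auxiliary pointwise estimate on $v^j_\varepsilon$ itself.
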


\begin{proof}
The function $v^j_\ee$ is solution to the following boundary value
problem
\begin{equation*}
\left\{
\begin{array}{ll}
\Delta v^j_\ee = 0, & x\in {(T^j_{\ee/4})^+} \setminus\overline{G^j_\ee},\\
\pa_{\nu}v^j_\ee = \ee^{-k}\left(\sigma(u - {\hat w_\ee^j}) - \sigma(u - w^j_\ee) \right), & x\in G^j_\ee,\\
{ \pa_\nu v^j_\ee = 0 } & { x \in ( T_{\ee/4}^j)^0 \setminus \overline { G^j _\ee }, }\\
v^j_\ee = - {\hat w_\ee^j}, & x\in {(\pa T^j_{\ee/4})^+}.
\end{array}
\right.
\end{equation*}
Applying the comparison principle we have $|v^j_\ee (u,x)|\le |{ \hat
w_\ee^j (u, x)}|$ a.e. in ${(T^j_{\ee/4})^+}$. We take $v^j_\ee$ as
a test function in the corresponding weak solution integral
expression for the above problem
\begin{equation} \label{eq:first estimate vee}
{\Vert\nabla v^j_\ee \Vert_{L_2({(T^j_{\ee/4})^+})}^2} +
\ee^{-k}\int\limits_{G^j_\ee}\left(\sigma(u - w^j_\ee) - \sigma(u -
{\hat w_\ee^j}) \right)v^j_\ee dx' = - \int\limits_{{(\pa
T^j_{\ee/4})^+}} \pa_{\nu}v^j_\ee {\hat w_\ee^j} ds.
\end{equation}
We transform the right-hand side expression of the inequality in the
following way:
\begin{equation*}
- \int\limits_{ {( \pa T^j_{\ee/4} )^+}} \pa_{\nu}v^j_\ee {\hat w_\ee^j} ds = -\int\limits_{  {(T^j_{\ee/4})^+ \setminus \overline{ (T^{j}_{\ee/8} )^+}} }\nabla v^j_\ee\nabla{\hat w_\ee^j} dx + \int\limits_{{(\pa T^{j}_{\ee/8})^+}}\pa_\nu v^j_\ee {\hat w_\ee^j} ds.
\end{equation*}
Let us estimate the obtained terms. {We can extend $v_\ee^j$ by
symmetry $v_\ee^j (x,u) = v^\ee_j (-x,u)$ for $x \in (T_{\ee/4}^j
)^- $ which is harmonic in $T_{\ee / 4}^j \setminus \overline
{G_0}$.} By using some estimates on the derivatives of harmonic
functions and the maximum principle, for $\tilde x \in \pa
T^{j}_{\ee/8}$, we get
\begin{gather*}
|\pa_{x_i}v^j_\ee(\tilde x)| \le \frac{1}{|T_{\ee/16}(\tilde x)|}\bigg|\int\limits_{T_{\ee/16}(\tilde x)}\frac{\pa v^j_\ee}{\pa x_i} dx\bigg| = \\
= \frac{K}{\ee^n}\bigg|\int\limits_{\pa T_{\ee/16}(\tilde x)}v^j_\ee\nu_i dx\bigg| \le K{|u|},
\end{gather*}
since, on $\pa T_{\ee/16}(\tilde x)$, from the maximum principle we
have
\begin{equation} \label{eq:pointwise estimate vee}
|v^j_\ee|\le|{\hat w_\ee^j}|\le\frac{K{|u|}}{\left|(x-P^j_\ee)/a_\ee\right|^{n-2}}=K{|u|}\ee^{2-n}a_\ee^{n-2}=K{|u|}\ee^{2-n}\ee^{n-1}=K{|u|}\ee.
\end{equation}
This last estimate implies that $|\nabla v^j_\ee(\tilde x)|\le K$
for $\tilde x \in \pa T^j_{\ee/8}$. Therefore, we get an estimate of
the second term
\begin{equation*}
\bigg|\int\limits_{{ (\pa T^{j}_{\ee/8})^+ } }\pa_\nu v^j_\ee {\hat
w_\ee^j} ds\bigg| \le K {|u|}\max\limits_{\pa T^{j}_{\ee/8}}|{\hat
w_\ee^j}||\pa T^{j}_{\ee/8}|\le K{|u|^2}\ee^{n.}
\end{equation*}
Then we estimate the first term
\begin{equation*}
\bigg|\int\limits_{  {(T^j_{\ee/4})^+ \setminus \overline{ (T^{j}_{\ee/8} )^+}} }\nabla v^j_\ee\nabla\hat w dx \bigg|\le K{|u|^2}\ee a^{n-2}_\ee = K {|u|^2} \ee^n
\end{equation*}
Combining the obtained estimates and using the properties of
function $\sigma$ we get
\begin{equation*}
\Vert\nabla v^j_\ee\Vert^2_{L_2({(T^j_{\ee/4})^+})} \le K { |u|^2 }\ee^{n}
\end{equation*}
Friedrichs' inequality implies that
\begin{equation*}
\Vert v^j_\ee\Vert^2_{L_2({(T^j_{\ee/4})^+})}\le K {|u|^2}\ee^{n+2}
\end{equation*}
This concludes the proof.
\end{proof}
{\color{red} }

\begin{lemma}
\label{lem:trace theorem vee} We have that
\begin{equation}
\frac{1}{|G_{\varepsilon }^{j}|}\int_{G_{\varepsilon }^{j}}\left\vert
v_{\varepsilon }^{j}(u,x^{\prime })\right\vert ^{2}dx^{\prime }\leq
\varepsilon .
\end{equation}
\footnote{NOTE: Function $v_\ee^j$ in this section depends on $u$.}
\end{lemma}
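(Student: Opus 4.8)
The plan is to read the bound off the two estimates for $v_\varepsilon^j$ contained in Lemma~\ref{lemma w and hat w} by means of a scale–invariant trace–Sobolev inequality in the half–space, localized to a single cell. The decisive arithmetic fact is that the surface $G_\varepsilon^j$ has $(n-1)$–measure $|G_\varepsilon^j|=a_\varepsilon^{n-1}|G_0|$ with $a_\varepsilon=C_0\varepsilon^{k}$, $k=\frac{n-1}{n-2}$, so that the critical relation gives the identity $n-k(n-2)=1$; this is exactly what will produce the factor $\varepsilon$ on the right-hand side.

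\emph{Step 1 (localization).} Fix a cut–off $\theta_\varepsilon\in C_c^\infty(\mathbb R^n)$ with $\theta_\varepsilon\equiv 1$ on $T^j_{\varepsilon/8}(P^j_\varepsilon)$, $\theta_\varepsilon\equiv 0$ outside $T^j_{\varepsilon/4}(P^j_\varepsilon)$ and $|\nabla\theta_\varepsilon|\le C\varepsilon^{-1}$. Since $a_\varepsilon\ll\varepsilon$, for $\varepsilon$ small one has $\overline{G_\varepsilon^j}\subset (T^j_{\varepsilon/8})^+$, hence $\theta_\varepsilon v_\varepsilon^j=v_\varepsilon^j$ on $G_\varepsilon^j$; moreover the extension of $\theta_\varepsilon v_\varepsilon^j$ by zero is a compactly supported function of $H^1((\mathbb R^n)^+)$, so the homogeneous trace–Sobolev inequality applies to it.

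\emph{Step 2 (trace–Sobolev and H\"older).} Apply the scale–invariant trace form of the Sobolev inequality in the half–space, $\|\varphi\|_{L^{q}(\{y_1=0\})}\le C_n\|\nabla\varphi\|_{L^2((\mathbb R^n)^+)}$ with $q=\tfrac{2(n-1)}{n-2}$, to $\varphi=\theta_\varepsilon v_\varepsilon^j$, and then H\"older on $G_\varepsilon^j$. Since $\tfrac12-\tfrac1q=\tfrac{1}{2(n-1)}$, this gives
\[
\int_{G_\varepsilon^j}\bigl|v_\varepsilon^j(u,x')\bigr|^2\,dx'
\le C\,|G_\varepsilon^j|^{\frac{1}{n-1}}\,\bigl\|\nabla(\theta_\varepsilon v_\varepsilon^j)\bigr\|^2_{L^2((\mathbb R^n)^+)}
= C\,|G_0|^{\frac{1}{n-1}}\,a_\varepsilon\,\bigl\|\nabla(\theta_\varepsilon v_\varepsilon^j)\bigr\|^2_{L^2((\mathbb R^n)^+)} .
\]
The crucial point here is that the geometric factor is $|G_\varepsilon^j|^{1/(n-1)}\sim a_\varepsilon$, and \emph{not} the diameter $\varepsilon$ of the cell.

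\emph{Step 3 (insert the bulk estimates).} Expand $\nabla(\theta_\varepsilon v_\varepsilon^j)=\theta_\varepsilon\nabla v_\varepsilon^j+v_\varepsilon^j\nabla\theta_\varepsilon$ and use Lemma~\ref{lemma w and hat w}:
\[
\bigl\|\nabla(\theta_\varepsilon v_\varepsilon^j)\bigr\|^2_{L^2}
\le 2\|\nabla v_\varepsilon^j\|^2_{L^2((T^j_{\varepsilon/4})^+)}+\frac{C}{\varepsilon^2}\|v_\varepsilon^j\|^2_{L^2((T^j_{\varepsilon/4})^+)}
\le CK|u|^2\bigl(\varepsilon^{n}+\varepsilon^{-2}\varepsilon^{n+2}\bigr)=CK|u|^2\varepsilon^{n}.
\]
(Equivalently, on $\operatorname{supp}\nabla\theta_\varepsilon$ one may use $|v_\varepsilon^j|\le|\hat w_\varepsilon^j|\le C|u|\varepsilon$, which follows from the decay of $\hat w$ together with $a_\varepsilon^{\,n-2}=C_0^{\,n-2}\varepsilon^{\,n-1}$.) Finally, combining with Step~2 and dividing by $|G_\varepsilon^j|=C_0^{n-1}|G_0|\,\varepsilon^{k(n-1)}$,
\[
\frac{1}{|G_\varepsilon^j|}\int_{G_\varepsilon^j}|v_\varepsilon^j|^2\,dx'
\le C(u)\,a_\varepsilon\,\varepsilon^{n}\,a_\varepsilon^{1-n}
= C(u)\,\varepsilon^{\,n-k(n-2)}
= C(u)\,\varepsilon ,
\]
using $a_\varepsilon^{\,2-n}\varepsilon^{n}=C_0^{\,2-n}\varepsilon^{\,n-k(n-2)}$ and the critical identity $n-k(n-2)=1$; absorbing the ($\varepsilon$–independent) constant yields the claim.

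The only genuinely delicate point is Step~2: one must invoke the \emph{scale–invariant} trace–Sobolev inequality so that the small measure of $G_\varepsilon^j$ enters as $|G_\varepsilon^j|^{1/(n-1)}\sim a_\varepsilon$; this, combined with the fact that the $\varepsilon^{-2}$ loss coming from $\nabla\theta_\varepsilon$ is \emph{exactly} compensated by the extra $\varepsilon^2$ in the $L^2$–bound of Lemma~\ref{lemma w and hat w}, is what makes the final exponent collapse to $1$ precisely in the critical regime.
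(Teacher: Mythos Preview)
Your proof is correct, and it takes a genuinely different route from the paper.

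The paper's argument goes back to the energy identity \eqref{eq:first estimate vee} in the proof of Lemma~\ref{lemma w and hat w} and uses the \emph{superlinearity} lower bound $|\sigma(s)-\sigma(t)|\ge k_1|s-t|$ from \eqref{eq:sigma regularity} on the boundary term. After fixing signs, that identity contains on the left the nonnegative contribution
\[
\varepsilon^{-k}\int_{G_\varepsilon^j}\bigl(\sigma(u-\hat w_\varepsilon^j)-\sigma(u-w_\varepsilon^j)\bigr)v_\varepsilon^j\,dx'
\;\ge\; k_1\,\varepsilon^{-k}\int_{G_\varepsilon^j}|v_\varepsilon^j|^2\,dx',
\]
and since the right-hand side was already shown to be $\le K|u|^2\varepsilon^n$, one obtains $k_1\,\varepsilon^{-k}\int_{G_\varepsilon^j}|v_\varepsilon^j|^2\le K|u|^2\varepsilon^n$. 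Recalling $\varepsilon^{-k}=C_0\,a_\varepsilon^{-1}$ and $|G_\varepsilon^j|\sim a_\varepsilon^{\,n-1}$ gives
\[
\frac{1}{|G_\varepsilon^j|}\int_{G_\varepsilon^j}|v_\varepsilon^j|^2\,dx'
\le K|u|^2\,a_\varepsilon^{\,2-n}\,\varepsilon^n=K|u|^2\,\varepsilon,
\]
exactly your final line, but via $k_1$ rather than via trace--Sobolev.

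What each approach buys: the paper's proof is completely elementary (no sharp embedding needed) but hinges on $k_1>0$; indeed the authors remark in Section~6.2 that superlinearity is used \emph{only} in this lemma and conjecture it can be dropped, giving a somewhat ad hoc verification for $n=3$ with $G_0$ a ball. Your argument, by invoking the scale-invariant trace inequality $\|\varphi\|_{L^{2(n-1)/(n-2)}(\{x_1=0\})}\le C_n\|\nabla\varphi\|_{L^2(\mathbb R^n_+)}$ and then only the bulk bounds of Lemma~\ref{lemma w and hat w} (which use just the monotonicity of $\sigma$), dispenses with $k_1$ altogether for general $G_0$ and all $n\ge 3$. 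In that sense your route is strictly more general than the paper's and in fact resolves the point they leave open.
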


\begin{proof}
    From \eqref{eq:first estimate vee}, using \eqref{eq:sigma regularity}, we deduce that
    \begin{equation}
        k_1 a_\ee^{-1} \int_{G_\ee^j} |v_\ee^j|^2 \le K |u|^2 \ee^{n}.
    \end{equation}
    Thus
    \begin{align}
    \frac{1}{|G_\ee^j|} \int_{G_\ee^j} |v_\ee^j(u,x')|^2 dx' &\le Ka_\ee^{1-n} \int_{G_\ee^j} |v_\ee^j(u,x')|^2 dx' \\
    &\le  Ka_\ee^{2-n} a_\ee^{-1} \int_{G_\ee^j} |v_\ee^j(u,x')|^2 dx' \\
    & \le K |u|^2 a_\ee^{2-n} \ee^n = K|u|^2 \ee.
    \end{align}
    This completes the proof.
\end{proof}

\subsection{Convergence to the \textquotedblleft strange
term\textquotedblright}

The following result plays a crucial rol in the proof of Theorem 1.

\begin{lemma}
\label{lemma ball conv} Let $H$ be the function defined by
\eqref{H
definition}, and let $\varphi $ an arbitrary function in $C^{\infty }(\Omega
)$. Then for any test function $h\in H^{1}(\Omega ,\Gamma _{2})$ we have
\begin{equation}
\bigg|\sum\limits_{j\in \Upsilon _{\varepsilon }}\int\limits_{{\ (\partial
T_{\varepsilon /4}^{j})^{+}}}\partial _{\nu _{x}}{\hat{w}_{\varepsilon }^{j}}%
(\varphi (\tilde{P}_{\varepsilon }^{j}),{s})h({s})ds+C_{0}^{n-2}\int\limits_{%
{\Gamma _{1}}}H(\varphi (x^{\prime }))h(x^{\prime })dx^{\prime }\bigg|%
\rightarrow 0,  \label{ball conv}
\end{equation}%
as $\varepsilon \rightarrow 0$, where $\tilde{P}_{\varepsilon }^{j}\in {%
G_{\varepsilon }^{j}}$ and {$\nu =(-1,0,\cdots ,0)$} is the unit outward
normal to ${(\partial T_{\varepsilon /4}^{j})^{+}}$.
\end{lemma}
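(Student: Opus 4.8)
The plan is to convert the sum of boundary integrals over the spheres $(\partial T^j_{\varepsilon/4})^+$ into a Riemann sum that converges to the claimed boundary integral over $\Gamma_1$. The first step is to compute, for a single cell, the quantity $\int_{(\partial T^j_{\varepsilon/4})^+} \partial_{\nu_x} \hat w^j_\varepsilon(u,s)\, ds$ for a frozen value $u = \varphi(\tilde P^j_\varepsilon)$. Since $\hat w^j_\varepsilon(u,x) = \hat w(u, (x-P^j_\varepsilon)/a_\varepsilon)$ is harmonic in $(T^j_{\varepsilon/4})^+ \setminus \overline{G^j_\varepsilon}$, the divergence theorem applied on the annular region between $(\partial T^j_{\varepsilon/4})^+$ and $\partial G^j_\varepsilon$ gives that this flux equals $\int_{G^j_\varepsilon} \partial_\nu \hat w^j_\varepsilon\, dx'$, and rescaling $x \mapsto y = (x-P^j_\varepsilon)/a_\varepsilon$ together with \eqref{H definition} and the relation $a_\varepsilon = C_0\varepsilon^k$ yields
\begin{equation*}
\int\limits_{(\partial T^j_{\varepsilon/4})^+} \partial_{\nu_x} \hat w^j_\varepsilon(u,s)\, ds = a_\varepsilon^{n-2} H(u) = C_0^{n-2}\varepsilon^{n-1} H(u).
\end{equation*}
The key point is that the decay $\hat w(u,y) = O(|y|^{2-n})$ and $\partial_\nu \hat w(u,y) = O(|y|^{1-n})$ from Lemma~\ref{existence hat w} and elliptic estimates make the flux through $(\partial T^j_{\varepsilon/4})^+$ essentially the full ``capacity flux'' even though the sphere has radius $\varepsilon/4 \gg a_\varepsilon$; strictly, one should carry the error in moving from the sphere of radius $\varepsilon/4$ to a sphere of radius comparable to $a_\varepsilon$ and check it is $o(\varepsilon^{n-1})$ per cell.

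Next I would replace the test function $h$ on each sphere by a cell average. Writing $h^j_\varepsilon$ for the mean of $h$ over $(\partial T^j_{\varepsilon/4})^+$ (or over a suitable neighborhood of $P^j_\varepsilon$ in $\Gamma_1$), the difference
\begin{equation*}
\sum_{j\in\Upsilon_\varepsilon} \int_{(\partial T^j_{\varepsilon/4})^+} \partial_{\nu_x}\hat w^j_\varepsilon(\varphi(\tilde P^j_\varepsilon),s)\big(h(s) - h^j_\varepsilon\big)\, ds
\end{equation*}
is controlled by a Cauchy--Schwarz argument: the $L^2$ norm of $\partial_\nu \hat w^j_\varepsilon$ over the sphere is $O(\varepsilon^{(n-1)/2} \cdot \varepsilon^{-1/2})$ after rescaling (it behaves like $a_\varepsilon^{n-2}\varepsilon^{-(n-1)/2}$ up to the decay factor), paired against the oscillation $\|h - h^j_\varepsilon\|_{L^2}$ which is $O(\varepsilon)\|\nabla h\|_{L^2(\text{cell})}$ by Poincaré on a thin shell; summing and using $|\Upsilon_\varepsilon| \sim d\varepsilon^{1-n}$ gives a bound tending to zero. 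This is the technically delicate step, because the normal derivative of $\hat w^j_\varepsilon$ concentrates and one must use the harmonicity/decay precisely so that the shell estimate does not lose a power of $\varepsilon$; it is here that the trace-type Lemma~\ref{lem:trace theorem vee} philosophy (though stated for $v^j_\varepsilon$) and the interior gradient bounds for harmonic functions do the real work.

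With $h$ replaced by its cell value, what remains is $C_0^{n-2}\varepsilon^{n-1}\sum_{j\in\Upsilon_\varepsilon} H(\varphi(\tilde P^j_\varepsilon))\, h^j_\varepsilon$. Since $\varphi \in C^\infty(\Omega)$ and $H$ is Lipschitz by Lemma~\ref{lem:H Lipschitz}, $H\circ\varphi$ is continuous on $\overline\Gamma_1$, so replacing $\tilde P^j_\varepsilon$ by $P^j_\varepsilon = \varepsilon j$ costs $o(1)$, and this becomes a Riemann sum for $C_0^{n-2}\int_{\Gamma_1} H(\varphi(x'))\, h(x')\, dx'$ over the $(n-1)$-dimensional lattice of mesh $\varepsilon$ (using that $\sum_j \varepsilon^{n-1} g(\varepsilon j) \to \int g$ for continuous $g$, and that the cells $j\notin\Upsilon_\varepsilon$ near $\partial\Gamma_1$ contribute a boundary strip of measure $O(\varepsilon)$). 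Passing to the limit gives \eqref{ball conv}. I expect the main obstacle to be the second step — making rigorous that the concentrated normal derivative on the sphere $(\partial T^j_{\varepsilon/4})^+$ can be tested against a fixed $H^1(\Omega,\Gamma_2)$ function with an error that sums to zero, which requires the decay estimates on $\hat w$ to be used sharply rather than crudely.
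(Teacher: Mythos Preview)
Your plan is sound and will work, but it is not how the paper proceeds. The paper introduces, for each cell, an auxiliary corrector $\theta^j_\varepsilon$ solving a Neumann problem on the cylinder $Q^j_\varepsilon = (0,\varepsilon)\times\bigl(\varepsilon j + (-\varepsilon/2,\varepsilon/2)^{n-1}\bigr)$ minus the closed half-ball $\overline{(T^j_{\varepsilon/4})^+}$, with $\partial_\nu \theta^j_\varepsilon = -\partial_\nu \hat w^j_\varepsilon$ on the hemisphere, $\partial_{x_1}\theta^j_\varepsilon = \mu^j_\varepsilon$ on the top face $\gamma^j_\varepsilon = Q^j_\varepsilon\cap\{x_1=\varepsilon\}$, zero Neumann elsewhere, and zero mean. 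The compatibility condition forces $\mu^j_\varepsilon = -C_0^{n-2} H(\varphi(\tilde P^j_\varepsilon))$, which is exactly your Step~1 flux computation. After proving $\|\nabla\theta^j_\varepsilon\|_{L^2}^2 \le K\varepsilon^n$, one tests the equation for $\theta^j_\varepsilon$ against $h$; the integration by parts automatically converts the hemisphere integral into the flat-face integral $\mu^j_\varepsilon \int_{\gamma^j_\varepsilon} h$, plus a volume error $\int \nabla\theta^j_\varepsilon\cdot\nabla h$ that sums to $O(\sqrt\varepsilon)$. The final comparison of $\sum_j \mu^j_\varepsilon \int_{\gamma^j_\varepsilon} h$ with the $\Gamma_1$ integral uses Lipschitz continuity of $H$ and $L^2$-continuity of traces of $H^1$ functions on parallel hyperplanes.

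Your route bypasses $\theta^j_\varepsilon$ by exploiting the pointwise bound $|\partial_\nu \hat w^j_\varepsilon| \le K$ on the hemisphere, which indeed follows from the $|y|^{1-n}$ gradient decay together with the scale matching $a_\varepsilon^{n-2}\varepsilon^{1-n} = C_0^{n-2}$. That is more elementary, but be precise about the choice of $h^j_\varepsilon$: take it to be the average of $h$ over the flat cell $\hat\gamma^j_\varepsilon \subset \Gamma_1$, not over the hemisphere, so that Step~3 is a genuine Riemann sum on $\Gamma_1$. Step~2 then needs a Poincar\'e--trace estimate of the form $\|h - h^j_\varepsilon\|_{L^2((\partial T^j_{\varepsilon/4})^+)} \le K\varepsilon^{1/2}\|\nabla h\|_{L^2(Q^j_\varepsilon)}$, obtained by combining a Poincar\'e inequality on $Q^j_\varepsilon$ with mean on the bottom face and a scaled trace onto the internal hemisphere; together with the pointwise bound and Cauchy--Schwarz this yields the required $O(\sqrt\varepsilon)$ after summation. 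The paper's corrector method is more robust (it would survive if $\partial_\nu\hat w^j_\varepsilon$ were controlled only in an integral sense on the hemisphere), while your argument is shorter precisely because the critical scaling makes the pointwise bound available.
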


\begin{proof}
Consider the cylinder
\begin{equation*}
Q^j_\ee = \left\{x\in\mathbb{R}^n \quad : \quad 0 < x_1 < \ee, \quad
-\frac \ee 2 < x_i - (\tilde P^j_{\ee})_i <\frac \ee 2, \quad
i=2,\cdots, {n}\right\}
\end{equation*}
We define the auxiliary function $%
\theta^j_\ee$ as the unique
solution to the following boundary value
problem
\begin{equation}\label{theta aux prob}
\begin{dcases}
\Delta
\theta^j_\ee = 0, & Y_\ee^j = Q^j_\ee\setminus\overline{{(T^{j}_{\ee/4})^+}}%
,\\
\pa_{{\nu}}\theta^j_\ee = {-}\pa_\nu { \hat w _\ee^j }(\varphi(\tilde
P_\ee^j), {x}), & x\in {(\pa T^{j}_{\ee/4})^+} ,\\
\frac{\pa\theta^j_\ee}{%
\pa x_1} = \mu^j_\ee, & x\in \gamma^j_\ee = \pa Q^j_\ee \cap \{x:\, x_1 =
\ee\},\\
\frac{\pa\theta^j_\ee}{\pa x_1} = 0, & \mbox{on the rest of the
boundary } \pa Q^j_\ee,\\
\langle\theta^j_\ee\rangle_{Y^j_\ee} = 0.
\end{%
dcases}
\end{equation}
The constant $\mu^j_\ee$ is defined from the
solvability condition for the problem \eqref{theta aux prob}
\begin{equation%
}
\mu^j_\ee = -C_0^{n-2} H(\varphi(\tilde
P^j_\ee)).
\end{equation}
We take $\theta^j_\ee$ as a test function in the
integral identity
associated to the problem \eqref{theta aux prob} and
obtain
\begin{equation}\label{theta int ident}
\int\limits_{Y^j_\ee}|%
\nabla\theta^j_\ee|^2 dx
=
-\mu^j_\ee\int\limits_{\gamma^{j}_{\ee}}\theta^j_\ee dx'
+
\int\limits_{S_{\ee/4}^{j, +}}\pa_\nu \hat w \theta^j_\ee ds.
\end{%
equation}
Using the embedding theorems we obtain the following estimate
\begin{equation}\label{gamma trace estim}
\int\limits_{\gamma^{j}_{\ee}}|%
\theta^j_\ee|dx'
\le
K\ee^{(n-1)/2}\Vert\theta^j_\ee\Vert_{L_2(\gamma^{j}_{\ee})}
\le
K\ee^{n/2}\Vert\nabla\theta^j_\ee\Vert_{L_2(Y^j_\ee)}.
\end{equation}%

Taking into account that
\begin{equation*} \max\limits_{{(\pa
T^{j}_{\ee/4})^+} }|\pa_\nu{\hat w_\ee^j(\varphi(\tilde P^j_\ee),  {x})}|\le
K\frac{a_\ee^{-1}}{\left|\frac{x - P^j_\ee}{a_\ee}\right|^{n-1}} = K
a^{n-2}_{\ee}\ee^{1-n}\le K
\end{equation*}
and using some estimates
proved in \cite{OlSh1996} we derive
\begin{align}
\int\limits_{{(\pa
T^{j}_{\ee/4})^+} }|(\pa_\nu {\hat w_\ee^j (\varphi(\tilde P^j_\ee), s) })
\theta^j_\ee| ds &\le K\int\limits_{{(\pa T^{j}_{\ee/4})^+}
}|\theta^j_\ee|ds \le K\ee^{(n-1)/2}\Vert\theta^j_\ee\Vert_{L_2({(\pa
T^{j}_{\ee/4})^+} )} \nonumber \\
&\le
K\ee^{(n-1)/2}\{\ee^{-1/2}\Vert\theta^j_\ee\Vert_{L_2(Y^j_\ee)} + \sqrt{\ee}%
\Vert\nabla\theta^j_\ee\Vert_{L_2(Y^j_\ee)}\} \nonumber\\
&\le
K\ee^{n/2}\Vert\nabla\theta^j_\ee\Vert_{L_2(Y^j_\ee)} \label{ball boundary
estim}
\end{align}
From the above estimates  \eqref{gamma trace estim} and
\eqref{ball
boundary estim} we get
\begin{equation}\label{theta grad estim}
\Vert\nabla\theta^j_\ee\Vert^2_{L_2(Y^j_\ee)} \le K\ee^{n}
\end{equation}%

From the estimate \eqref{theta grad estim} it follows that
\begin{%
equation*}
\sum\limits_{j\in\Upsilon_\ee}\Vert\theta^j_\ee\Vert^2_{L_2(Y^j_%
\ee)} \le K\ee.
\end{equation*}
Adding all the above integral identities
for problems \eqref{theta
aux prob} we derive that for $h\in H^1(\Omega)$
the following
inequality holds
\begin{align}
&\bigg|\sum\limits_{j\in%
\Upsilon_\ee}\int\limits_{{(\pa T^{j}_{\ee/4})^+} }\pa_{\nu}{ \hat w_\ee^j
(\varphi(\tilde P^j_\ee), s)} h ds +
C_0^{n-2}\int\limits_{\Gamma_1}H(\varphi(x')) {h}dx' \bigg| \nonumber \\
&
\quad \le
\bigg|\sum\limits_{j\in\Upsilon_\ee}\int\limits_{\hat
Y^j_\ee}\nabla%
\theta^j_\ee\nabla h dx
\bigg|+
\bigg|\sum\limits_{j\in\Upsilon_\ee}\mu^j_\ee\int\limits_{%
\gamma^{j}_{\ee}}h
dx' + C_0^{n-2}\int\limits_{\Gamma_1}H(\varphi(x'))h
dx'\bigg|.
\label{lemma theta estim 1}
\end{align}
Let us estimate the
terms in the right-hand side of the inequality
\eqref{lemma theta estim 1}.
By using the estimate \eqref{theta grad
estim}, we get the following
inequality on the first term
\begin{equation}\label{lemma 2 estim 1}%
	\bigg|\sum\limits_{j\in\Upsilon_\ee}\int\limits_{Y^j_\ee}\nabla\theta^j_%
	\ee\nabla
	h dx \bigg|\le K\sqrt{\ee}\Vert h\Vert_{H^1(\Omega).}
\end{equation}
Denote
\begin{equation}
\hat\gamma^j_\ee = {(Q^j_\ee)^0},
\qquad \Gamma^\ee_1 = \bigcup\limits_{j\in\Upsilon_\ee}\hat\gamma^j_\ee.
\end{equation}Then we have
\begin{align*}
&\left|\sum\limits_{j\in%
\Upsilon_\ee}\mu^j_\ee\int\limits_{\gamma^{j}_{\ee}}h dx' +
C_0^{n-2}\int\limits_{\Gamma_1}H(\varphi(x'))h
dx'\right|\\
&\quad\le\bigg|C_0^{n-2}\sum\limits_{j\in\Upsilon_\ee}\left(%
\int\limits_{\gamma^{-}_{j,\ee}}H(\varphi(\tilde P^j_\ee))h dx' -
\int\limits_{\hat\gamma^j_\ee}H(\varphi(x'))h dx' \right)\bigg|\\
&\quad\le
C_0^{n-2}\bigg|\sum\limits_{j\in\Upsilon_\ee}\int\limits_{\hat\gamma^j_%
\ee}(H(\varphi(\tilde P^j_\ee)) - H(\varphi(x')))h dx' \bigg|
\\
&\quad\quad+
C_0^{n-2}\bigg|\sum\limits_{j\in\Upsilon_\ee}\left(\int%
\limits_{\gamma^{j}_{\ee}}H(\varphi(\tilde
P^j_\ee))h dx' -
\int\limits_{\hat\gamma^j_\ee}H(\varphi(\tilde
P^j_\ee))h dx'
\right)\bigg|.
\end{align*}

Let us estimate the terms in the right-hand
side of the obtained
inequality. For the first term we have{
\begin{align*}%
\bigg|\sum\limits_{j\in\Upsilon_\ee}\int\limits_{\hat\gamma^j_\ee}(H(%
\varphi(\tilde P^j_\ee)) - H(\varphi(x')))h dx' \bigg|& \le K\Vert
h\Vert_{L_2(\Gamma^\ee_1)} \max_{\substack{j \in \Upsilon_\ee \\ x' \in
\hat\gamma^j_\ee}} \bigg|H(\varphi(\tilde P^j_\ee)) - H(\varphi(x'))
\bigg|\\
& \le K\Vert h\Vert_{L_2(\Gamma^\ee_1)} \max_{\substack{j \in
\Upsilon_\ee \\ x' \in \hat\gamma^j_\ee}} \bigg|\varphi(\tilde P^j_\ee) -
\varphi(x') \bigg|\\
& \le K \ee\Vert h\Vert_{H^1(\Omega)}.
\end{align*}%

} By using continuity in $L^2$-norm on the hyperplanes of the
functions
from $H^1(\Omega)$ we estimate the second term
\begin{gather*}%
\bigg|\sum\limits_{j\in\Upsilon_\ee}\left(\int\limits_{\gamma^{j}_{\ee}}H(%
\varphi(\tilde
P^j_\ee))h dx' -
\int\limits_{\hat\gamma^j_\ee}H(\varphi(\tilde
P^j_\ee))h dx' \right)\bigg|
\le K\sqrt{\ee}\Vert
h\Vert_{H^1(\Omega)}.
\end{gather*}
Hence, we have
\begin{equation}\label{lemma 2 estim 2}
\bigg|\sum\limits_{j\in\Upsilon_%
\ee}\mu^j_\ee\int\limits_{\gamma^{j}_{\ee}}h
dx' +
C_0^{n-2}\int\limits_{\Gamma_1}H(\varphi(x'))h dx'\bigg|\le
K\sqrt{\ee}%
\Vert h\Vert_{H^1(\Omega)}.
\end{equation}
Combining estimates \eqref{%
lemma 2 estim 1} and \eqref{lemma 2 estim
2} we conclude the proof.
\end{proof}

\section{Convergence of the boundary integrals where $u_\protect\varepsilon %
\le 0$}

\label{sec:convergence boundary integrals uee less 0}

\subsection{The auxiliary function $\protect\kappa_\protect\varepsilon^j$}

We introduce the function $\kappa _{\varepsilon }^{j}$ as the unique
solution of the following problem
\begin{equation}
\left\{
\begin{array}{ll}
\Delta \kappa _{\varepsilon }^{j}=0, & x\in {(T_{\varepsilon /4}^{j})^{+}}%
\setminus \overline{G_{\varepsilon }^{j}}, \\
\kappa _{\varepsilon }^{j}=1, & x\in G_{\varepsilon }^{j}, \\
{\partial _{\nu }\kappa _{\varepsilon }^{j}=0} & {x\in (\partial
T_{\varepsilon /4}^{j})^{0}\setminus \overline{G_{\varepsilon }^{j}}}, \\
\kappa _{\varepsilon }^{j}=0, & x\in \partial T_{\varepsilon /4}^{j},%
\end{array}%
\right.   \label{kappa prob}
\end{equation}%
and then we define
\begin{equation}
\kappa _{\varepsilon }=\left\{
\begin{array}{ll}
\kappa _{\varepsilon }^{j}(x), & x\in {(T_{\varepsilon /4}^{j})^{+}},\,j\in
\Upsilon _{\varepsilon } \\
0, & x\in \mathbb{R}^{n}\setminus \bigcup\limits_{j\in \Upsilon
_{\varepsilon }}{\overline{(T_{\varepsilon /4}^{j})^{+}}}.%
\end{array}%
\right.   \label{Kappa def}
\end{equation}%
It is easy to see that $\kappa _{\varepsilon }\in H^{1}(\Omega )$ and
\begin{equation}
\kappa _{\varepsilon }\rightharpoonup 0\mbox{ weakly in }H^{1}(\Omega )%
\mbox{
as }\varepsilon \rightarrow 0.  \label{kappa conv}
\end{equation}

\subsection{Estimate of the difference between $\protect\kappa _{\protect%
\varepsilon }^{j}$ and $\hat{\protect\kappa}_{\protect\varepsilon }^{j}$}

\begin{lemma}
\label{lemma estim kappas} Let $\kappa _{\varepsilon }^{j}$ and $\hat{\kappa}
$ as above. Then
\begin{equation}
\sum\limits_{j\in \Upsilon _{\varepsilon }}\Vert \kappa _{\varepsilon }^{j}-{%
\ \hat{\kappa}_{\varepsilon }^{j}}\Vert _{H^{1}({(T_{\varepsilon /4}^{j})^{+}%
})}^{2}\leq K\varepsilon .  \label{estim kappas}
\end{equation}
\end{lemma}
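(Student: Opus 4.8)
The plan is to exploit the structural analogy between the pair $(\kappa_\varepsilon^j, \hat\kappa_\varepsilon^j)$ and the pair $(w_\varepsilon^j, \hat w_\varepsilon^j)$ already treated in Lemma \ref{lemma w and hat w}, since $\kappa_\varepsilon^j$ and $\hat\kappa_\varepsilon^j$ solve precisely the ``linearized, Dirichlet boundary value'' version of those problems: both are harmonic in $(T_{\varepsilon/4}^j)^+\setminus\overline{G_\varepsilon^j}$, both equal $1$ on $G_\varepsilon^j$, both have vanishing normal derivative on the flat part away from $G_\varepsilon^j$, and they differ only in that $\kappa_\varepsilon^j$ vanishes on $(\partial T_{\varepsilon/4}^j)^+$ while $\hat\kappa_\varepsilon^j$ does not. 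So I would first set $\zeta_\varepsilon^j := \kappa_\varepsilon^j - \hat\kappa_\varepsilon^j$ and observe that it solves the exterior-in-a-ball problem
\begin{equation*}
\left\{
\begin{array}{ll}
\Delta \zeta_\varepsilon^j = 0, & x\in (T_{\varepsilon/4}^j)^+\setminus\overline{G_\varepsilon^j},\\
\zeta_\varepsilon^j = 0, & x\in G_\varepsilon^j,\\
\partial_\nu \zeta_\varepsilon^j = 0, & x\in (\partial T_{\varepsilon/4}^j)^0\setminus\overline{G_\varepsilon^j},\\
\zeta_\varepsilon^j = -\hat\kappa_\varepsilon^j, & x\in (\partial T_{\varepsilon/4}^j)^+.
\end{array}
\right.
\end{equation*}

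Next I would run the same energy estimate as in the proof of Lemma \ref{lemma w and hat w}. Extending $\zeta_\varepsilon^j$ by even reflection across $\{x_1=0\}$ makes it harmonic in $T_{\varepsilon/4}^j\setminus\overline{G_0}$ (here $G_0$ meaning the rescaled copy), so I can test the weak formulation with $\zeta_\varepsilon^j$ itself and get
\begin{equation*}
\|\nabla \zeta_\varepsilon^j\|_{L_2((T_{\varepsilon/4}^j)^+)}^2 = -\int\limits_{(\partial T_{\varepsilon/4}^j)^+}\partial_\nu \zeta_\varepsilon^j\,\hat\kappa_\varepsilon^j\,ds.
\end{equation*}
As before, I would split the boundary term by introducing the intermediate sphere $\partial T_{\varepsilon/8}^j$, write the surface integral over $(\partial T_{\varepsilon/4}^j)^+$ as a volume integral of $\nabla\zeta_\varepsilon^j\cdot\nabla\hat\kappa_\varepsilon^j$ over the annular shell plus a surface integral over $(\partial T_{\varepsilon/8}^j)^+$, and then use: (i) the pointwise bound $\hat\kappa_\varepsilon^j(x)\le K\,(a_\varepsilon/|x-P_\varepsilon^j|)^{n-2}$ from Lemma \ref{existence hat kappa}, which on $|x-P_\varepsilon^j|\sim\varepsilon$ gives $\hat\kappa_\varepsilon^j\le K a_\varepsilon^{n-2}\varepsilon^{2-n}=K\varepsilon$ by the choice $a_\varepsilon=C_0\varepsilon^{(n-1)/(n-2)}$; (ii) interior derivative estimates for the harmonic (reflected) function $\zeta_\varepsilon^j$ on $\partial T_{\varepsilon/8}^j$, which together with the maximum principle $|\zeta_\varepsilon^j|\le K\varepsilon$ on that sphere yield $|\nabla\zeta_\varepsilon^j|\le K$ there; (iii) the bound $|\nabla\hat\kappa_\varepsilon^j|\le K a_\varepsilon^{n-2}\varepsilon^{1-n}=K/\varepsilon$ on the annulus, with shell volume $\sim\varepsilon^n$. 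Each of the two boundary contributions is then $O(\varepsilon\cdot\varepsilon^{n-1})=O(\varepsilon^n)$ per cell, so $\|\nabla\zeta_\varepsilon^j\|_{L_2((T_{\varepsilon/4}^j)^+)}^2\le K\varepsilon^n$; Friedrichs' inequality in the ball of radius $\varepsilon/4$ gives $\|\zeta_\varepsilon^j\|_{L_2}^2\le K\varepsilon^{n+2}$. Summing over the $|\Upsilon_\varepsilon|\cong d\varepsilon^{1-n}$ cells produces $\sum_j\|\zeta_\varepsilon^j\|_{H^1}^2\le K(\varepsilon^n+\varepsilon^{n+2})\varepsilon^{1-n}=K\varepsilon$, which is exactly \eqref{estim kappas}.

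The main obstacle, as in Lemma \ref{lemma w and hat w}, is controlling the boundary term $\int_{(\partial T_{\varepsilon/4}^j)^+}\partial_\nu\zeta_\varepsilon^j\,\hat\kappa_\varepsilon^j\,ds$: $\partial_\nu\zeta_\varepsilon^j$ on $\partial T_{\varepsilon/4}^j$ is not directly under control, so the reflection-plus-interior-gradient-estimate trick on the intermediate sphere $\partial T_{\varepsilon/8}^j$ is essential, and one must be careful that the decay estimate for $\hat\kappa$ (Lemma \ref{existence hat kappa}) is applied at the correct scale — it is the precise matching $a_\varepsilon^{n-2}\varepsilon^{2-n}=C_0^{n-2}$ that makes the critical scaling work and keeps $\hat\kappa_\varepsilon^j$ of size $O(\varepsilon)$ rather than $O(1)$ on $\partial T_{\varepsilon/4}^j$. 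Everything else is the routine chain of Cauchy--Schwarz, trace, and Friedrichs inequalities already rehearsed above, so I would simply refer back to the proof of Lemma \ref{lemma w and hat w} for the details rather than repeat them.
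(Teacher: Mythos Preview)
Your proposal is correct and follows essentially the same route as the paper's own proof: set $\zeta_\varepsilon^j=\kappa_\varepsilon^j-\hat\kappa_\varepsilon^j$, test the equation with $\zeta_\varepsilon^j$, rewrite the boundary term via the intermediate sphere $\partial T_{\varepsilon/8}^j$, combine the decay estimate $\hat\kappa_\varepsilon^j=O(\varepsilon)$ with interior gradient bounds for the (reflected) harmonic $\zeta_\varepsilon^j$ to get $\|\nabla\zeta_\varepsilon^j\|^2\le K\varepsilon^n$, apply Friedrichs, and sum over $|\Upsilon_\varepsilon|\sim\varepsilon^{1-n}$ cells. One small slip: near the end you write ``the precise matching $a_\varepsilon^{n-2}\varepsilon^{2-n}=C_0^{n-2}$'', but as you computed correctly a few lines earlier this quantity equals $C_0^{n-2}\varepsilon$, not $C_0^{n-2}$---that extra factor of $\varepsilon$ is exactly what makes $\hat\kappa_\varepsilon^j=O(\varepsilon)$ on $\partial T_{\varepsilon/4}^j$.
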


\begin{proof}
The function $v^j_\ee = \kappa^j_\ee - \hat \kappa$ satisfies the
following problem
\begin{equation*}
\begin{dcases}
\Delta v^j_\ee = 0 & x\in {(T^j_{\ee/4})^+},\\
{v_\ee^j = 0 } & { x \in G^j _\ee,} \\
\pa_{\nu}v^j_\ee = 0 & x\in { (T _\ee^j )^0 \setminus \overline{G^j_\ee}},\\
v^j_\ee = -{ \hat \kappa_\ee^j } & x\in {(\pa T^j_{\ee/4})^+}.
\end{dcases}
\end{equation*}
We take $v^j_\ee$ as a test function in an integral identity for the
above problem
\begin{equation*}
\Vert\nabla v^j_\ee \Vert_{L_2({(T^j_{\ee/4})^+})} = -
\int\limits_{\pa {(T^j_{\ee/4})^+}} \pa_{\nu}v^j_\ee \hat \kappa ds.
\end{equation*}
We transform the right-hand side expression of the identity in the
following way:
\begin{equation*}
- \int\limits_{\pa {(T^j_{\ee/4})^+}} \pa_{\nu}v^j_\ee \hat \kappa ds = -\int\limits_{{(T^j_{\ee/4})^+}\setminus\overline{T^{j}_{\ee/8}}}\nabla v^j_\ee\nabla\hat \kappa dx + \int\limits_{\pa T^{j}_{\ee/8}}\pa_\nu v^j_\ee \hat \kappa ds.
\end{equation*}
For an arbitrary point $x_0 \in \pa T^{j}_{\ee/8}$ we have
\begin{align*}
|\pa_{x_i}v^j_\ee(x_0)| &\le \frac{1}{|T_{\ee/16}(x_0)|} \,\bigg|\int\limits_{T_{\ee/16}(x_0)}\frac{\pa v^j_\ee}{\pa x_i} dx\bigg| \\
&= \frac{K}{\ee^n}\bigg|\int\limits_{\pa T_{\ee/16}(x_0)}v^j_\ee\nu_i dx\bigg| \le K.
\end{align*}
Last estimate implies that $|\nabla v^j_\ee(\tilde x)|\le K$ for
$\tilde x \in \pa T^j_{\ee/8}$. Therefore, we can estimate the
second term in the following way
\begin{equation*}
\bigg|\int\limits_{\pa T^{j}_{\ee/8}}\pa_\nu v^j_\ee \hat \kappa
ds\bigg| \le K\max\limits_{\pa T^{j}_{\ee/8}}|\hat \kappa||\pa
T^{j}_{\ee/8}|\le K\ee^{n}.
\end{equation*}
Then we estimate the first term
\begin{equation*}
\bigg|\int\limits_{{(T^j_{\ee/4})^+}\setminus\overline{T^{j}_{\ee/8}}}\nabla v^j_\ee\nabla\hat \kappa dx \bigg|\le K\ee a^{n-2}_\ee = K \ee^n
\end{equation*}
By combining acquired estimations we derive
\begin{equation*}
\Vert\nabla v^j_\ee\Vert^2_{L_2({(T^j_{\ee/4})^+})}\le K\ee^{n}
\end{equation*}
Friedrichs' inequality imply
\begin{equation*}
\Vert v^j_\ee\Vert^2_{L_2({(T^j_{\ee/4})^+})}\le K\ee^{n+2}
\end{equation*}
This concludes the proof.
\end{proof}

\subsection{Convergence to the strange term}

\begin{lemma}
\label{lemma aux hom kappa} {Let $\lambda _{G_{0}}$ be given by %
\eqref{eq:lambda defn}.} Then for all functions $h\in H^{1}(\Omega ,\Gamma
_{2})$ we have
\begin{equation}
\bigg|\sum\limits_{j\in \Upsilon _{\varepsilon }}\int\limits_{{\ (\partial
T_{\varepsilon /4}^{j})^{+}}\cap \Omega }\partial _{\nu _{x}}{\hat{\kappa}%
_{\varepsilon }^{j}(s)}h({s})ds+C_{0}^{n-2}\lambda _{G_{0}}\int\limits_{{%
\Gamma _{1}}}hdx^{\prime }\bigg|\rightarrow 0,
\end{equation}%
as $\varepsilon \rightarrow 0$, where $\nu $ is the unit outward normal to $%
\partial T_{\varepsilon /4}^{j}\cap \Omega $.
\end{lemma}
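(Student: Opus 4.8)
The plan is to mimic, essentially line by line, the proof of Lemma~\ref{lemma ball conv}, replacing the rescaled exterior profile $\hat w_\varepsilon^j$ by $\hat\kappa_\varepsilon^j$ and the (a priori nonlinear) coefficient $H(\varphi(\tilde P_\varepsilon^j))$ by the genuine constant $\lambda_{G_0}$ of \eqref{eq:lambda defn}. For each $j\in\Upsilon_\varepsilon$ I would use the thin cylinder
\begin{equation*}
Q_\varepsilon^j=\Bigl\{x\in\mathbb R^n:\ 0<x_1<\varepsilon,\ \bigl|x_i-(P_\varepsilon^j)_i\bigr|<\tfrac{\varepsilon}{2},\ i=2,\dots,n\Bigr\},
\end{equation*}
put $Y_\varepsilon^j=Q_\varepsilon^j\setminus\overline{(T_{\varepsilon/4}^j)^+}$ and $\gamma_\varepsilon^j=\partial Q_\varepsilon^j\cap\{x_1=\varepsilon\}$, and let $\theta_\varepsilon^j$ be the unique zero-mean harmonic function on $Y_\varepsilon^j$ with $\partial_\nu\theta_\varepsilon^j=-\partial_\nu\hat\kappa_\varepsilon^j$ on $(\partial T_{\varepsilon/4}^j)^+$, $\partial_{x_1}\theta_\varepsilon^j=\mu_\varepsilon^j$ on $\gamma_\varepsilon^j$ and $\partial_{x_1}\theta_\varepsilon^j=0$ on the remaining faces of $\partial Q_\varepsilon^j$; this is exactly problem \eqref{theta aux prob} with $\hat\kappa_\varepsilon^j$ in place of $\hat w_\varepsilon^j$. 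Its solvability condition reads $\mu_\varepsilon^j\,|\gamma_\varepsilon^j|=\int_{(\partial T_{\varepsilon/4}^j)^+}\partial_\nu\hat\kappa_\varepsilon^j\,ds$, and since $\hat\kappa$ is harmonic on $(\mathbb R^n)^+$ with $\partial_\nu\hat\kappa=0$ on $\{y_1=0\}\setminus\overline{G_0}$, the divergence theorem on $(T_R)^+$ together with \eqref{eq:lambda defn} gives $\int_{(\partial T_R)^+}\partial_\nu\hat\kappa\,ds=-\lambda_{G_0}$ for every large $R$; using $\hat\kappa_\varepsilon^j(x)=\hat\kappa((x-P_\varepsilon^j)/a_\varepsilon)$, $a_\varepsilon^{n-2}=C_0^{n-2}\varepsilon^{n-1}$ and $|\gamma_\varepsilon^j|=\varepsilon^{n-1}$ one gets $\mu_\varepsilon^j=-C_0^{n-2}\lambda_{G_0}$, the same constant for every cell. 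This is the only place where the present argument genuinely simplifies with respect to Lemma~\ref{lemma ball conv}: there is no $\varphi$ to freeze, so the step replacing $\varphi(\tilde P_\varepsilon^j)$ by $\varphi(x')$ disappears.

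Next I would reproduce the per-cell energy bound $\|\nabla\theta_\varepsilon^j\|_{L_2(Y_\varepsilon^j)}^2\le K\varepsilon^n$ exactly as in \eqref{theta int ident}--\eqref{theta grad estim}. Testing with $\theta_\varepsilon^j$ gives $\|\nabla\theta_\varepsilon^j\|_{L_2(Y_\varepsilon^j)}^2=-\mu_\varepsilon^j\int_{\gamma_\varepsilon^j}\theta_\varepsilon^j\,dx'+\int_{(\partial T_{\varepsilon/4}^j)^+}\partial_\nu\hat\kappa_\varepsilon^j\,\theta_\varepsilon^j\,ds$; the first term is controlled by $|\mu_\varepsilon^j|\le K$ and the trace/Poincar\'e estimate $\int_{\gamma_\varepsilon^j}|\theta_\varepsilon^j|\,dx'\le K\varepsilon^{(n-1)/2}\|\theta_\varepsilon^j\|_{L_2(\gamma_\varepsilon^j)}\le K\varepsilon^{n/2}\|\nabla\theta_\varepsilon^j\|_{L_2(Y_\varepsilon^j)}$ (this is where the zero-mean normalization enters), and the second by the pointwise bound $\max_{(\partial T_{\varepsilon/4}^j)^+}|\partial_\nu\hat\kappa_\varepsilon^j|\le K a_\varepsilon^{-1}(a_\varepsilon/\varepsilon)^{n-1}=KC_0^{n-2}\le K$ (interior gradient estimates for harmonic functions plus \eqref{hat kappa estim}), the trace inequality of \cite{OlSh1996} and Poincar\'e. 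Since $|\Upsilon_\varepsilon|\cong d\varepsilon^{1-n}$, summing over $j$ yields $\sum_j\|\nabla\theta_\varepsilon^j\|_{L_2(Y_\varepsilon^j)}^2\le K\varepsilon$ and, by Poincar\'e on each $Y_\varepsilon^j$, $\sum_j\|\theta_\varepsilon^j\|_{L_2(Y_\varepsilon^j)}^2\to0$.

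To finish, I would add the weak identities for the $\theta_\varepsilon^j$ tested against a fixed $h\in H^1(\Omega,\Gamma_2)$, obtaining
\begin{align*}
&\Bigl|\sum_{j\in\Upsilon_\varepsilon}\int_{(\partial T_{\varepsilon/4}^j)^+\cap\Omega}\partial_\nu\hat\kappa_\varepsilon^j\,h\,ds+C_0^{n-2}\lambda_{G_0}\int_{\Gamma_1}h\,dx'\Bigr|\\
&\qquad\le\Bigl|\sum_{j\in\Upsilon_\varepsilon}\int_{Y_\varepsilon^j}\nabla\theta_\varepsilon^j\cdot\nabla h\,dx\Bigr|+\Bigl|\sum_{j\in\Upsilon_\varepsilon}\mu_\varepsilon^j\int_{\gamma_\varepsilon^j}h\,dx'+C_0^{n-2}\lambda_{G_0}\int_{\Gamma_1}h\,dx'\Bigr|.
\end{align*}
The first term on the right is $\le K\sqrt\varepsilon\,\|h\|_{H^1(\Omega)}$ by Cauchy--Schwarz and the previous paragraph. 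In the second term, since $\mu_\varepsilon^j=-C_0^{n-2}\lambda_{G_0}$ does not depend on $j$, it equals $C_0^{n-2}\lambda_{G_0}\bigl|\int_{\Gamma_1}h\,dx'-\sum_j\int_{\gamma_\varepsilon^j}h\,dx'\bigr|$; passing from the top faces $\gamma_\varepsilon^j$ (at $x_1=\varepsilon$) to the bottom faces $\hat\gamma_\varepsilon^j=(Q_\varepsilon^j)^0\subset\Gamma_1$ costs $K\sqrt\varepsilon\,\|h\|_{H^1(\Omega)}$ by $L^2$-continuity on hyperplanes of $H^1$-functions, and the fact that $\bigl|\Gamma_1\setminus\bigcup_j\hat\gamma_\varepsilon^j\bigr|=O(\varepsilon)$ adds a further $K\sqrt\varepsilon\,\|h\|_{L^2(\Gamma_1)}$. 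Hence the left-hand side is $\le K\sqrt\varepsilon\,\|h\|_{H^1(\Omega)}\to0$, which is the assertion. The only real labour is the bookkeeping in these last two estimates; I do not expect any essentially new obstacle beyond Lemma~\ref{lemma ball conv}, the capacity case being in fact the simpler, linear one.
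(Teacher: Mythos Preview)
Your proposal is correct and follows essentially the same route as the paper's own proof: the same cell-corrector $\theta_\varepsilon^j$ solving \eqref{theta aux prob 2}, the same solvability constant $\mu=-C_0^{n-2}\lambda_{G_0}$, the same per-cell energy bound $\|\nabla\theta_\varepsilon^j\|_{L_2(Y_\varepsilon^j)}^2\le K\varepsilon^n$, and the same two-term splitting after testing with $h$. If anything, your write-up is slightly more careful in separating the ``top-to-bottom'' hyperplane shift from the boundary strip $\Gamma_1\setminus\bigcup_j\hat\gamma_\varepsilon^j$, which the paper handles implicitly.
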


\begin{proof}
By analogy with the proof of Lemma \ref{lemma ball conv}, we define the function $\theta^j_\ee$ as a solution to the following boundary value problem
\begin{equation}\label{theta aux prob 2}
\begin{dcases}
    \Delta \theta^j_\ee = 0, & Y^j_\ee,\\
    \pa_{{\nu}}\theta^j_\ee = {-}\pa_\nu { \hat \kappa_\ee^j }, & x\in {(\pa T^{j}_{\ee/4})^+} ,\\
    \frac{\pa\theta^j_\ee}{\pa x_1} = \mu, & x\in \gamma^j_\ee,\\
    \frac{\pa\theta^j_\ee}{\pa x_1} = 0, & \pa Q^j_\ee \setminus ( {(\pa T^{j}_{\ee/4})^+}  \cup   \gamma^j_\ee  ),\\
    \langle\theta^j_\ee\rangle_{Y^j_\ee} = 0,
\end{dcases}
\end{equation}
The constant $\mu$ is defined from the solvability condition for the problem \eqref{theta aux prob 2}
\begin{equation}\label{mu epsilon def}
\mu = -C_0^{n-2}\lambda.
\end{equation}
By using the same technique as in the proof of the Lemma~\ref{ball conv} we have
\begin{equation}
\Vert\nabla\theta^j_\ee\Vert^2_{L_2(Y^j_\ee)} \le K\ee^{n},\qquad \sum\limits_{j\in\Upsilon_\ee}\Vert\theta^j_\ee\Vert^2_{L_2(Y^j_\ee)} \le K\ee.
\end{equation}
Summing up all integral identities for the problems \eqref{theta aux prob 2} we derive that for the arbitrary function from $H^1(\Omega)$ the following inequality is true
\begin{align}
&\bigg|\sum\limits_{j\in\Upsilon_\ee}\int\limits_{{(\pa T^{j}_{\ee/4})^+} }(\pa_{\nu} {\hat \kappa_\ee^j}) h ds + C_0^{n-2}\lambda\int\limits_{\Gamma_1}hdx' \bigg| \nonumber\\
&\quad \le \bigg|\sum\limits_{j\in\Upsilon_\ee}\int\limits_{\hat Y^j_\ee}\nabla\theta^j_\ee\nabla h dx \bigg|+ \bigg|\sum\limits_{j\in\Upsilon_\ee}\mu\int\limits_{\gamma^{j}_{\ee}}h dx' + C_0^{n-2}\lambda\int\limits_{\Gamma_1}h dx'\bigg|, \label{lemma theta estim 1 2}
\end{align}
Let us estimate terms in the right-hand side of the inequality \eqref{lemma theta estim 1 2}.
By using the estimate \eqref{theta grad estim}, we get following estimation of the first term
\begin{equation}\label{lemma 2 estim 1 2}
\bigg|\sum\limits_{j\in\Upsilon_\ee}\int\limits_{Y^j_\ee}\nabla\theta^j_\ee\nabla h dx \bigg|\le K\sqrt{\ee}\Vert h\Vert_{H^1(\Omega)}.
\end{equation}
Then we have
\begin{gather*}
\bigg|\sum\limits_{j\in\Upsilon_\ee}\mu\int\limits_{\gamma^{j}_{\ee}}h dx' + C_0^{n-2}\lambda\int\limits_{\Gamma_1}h dx'\bigg|\le\bigg|C_0^{n-2}\lambda\sum\limits_{j\in\Upsilon_\ee}\left(\int\limits_{\gamma^{-}_{j,\ee}}h dx' - \int\limits_{\hat\gamma^j_\ee}h dx' \right)\bigg|
\end{gather*}
By using continuity in $L_2$-norm on the hyperplanes of the functions from $H^1(\Omega)$ we estimate the second term
\begin{gather*}
\bigg|\sum\limits_{j\in\Upsilon_\ee}\left(\int\limits_{\gamma^{j}_{\ee}}h dx' - \int\limits_{\hat\gamma^j_\ee} h dx' \right)\bigg| \le K\sqrt{\ee}\Vert h\Vert_{H^1(\Omega)}
\end{gather*}
Hence, we have
\begin{equation}\label{lemma 2 estim 2 2}
\bigg|\sum\limits_{j\in\Upsilon_\ee}\mu_\ee\int\limits_{\gamma^{j}_{\ee}}h dx' + C_0^{n-2}\int\limits_{\Gamma_1} h dx'\bigg|\le K\sqrt{\ee}\Vert h\Vert_{H^1(\Omega)}
\end{equation}
Combining estimations \eqref{lemma 2 estim 1 2} and \eqref{lemma 2 estim 2 2} we conclude the proof.
\end{proof}

\section{Proof of Theorem \protect\ref{thm:1}}

For different reasons it is convenient to introduce some new notation:
instead to use the decomposition $u_{0}=u_{0,+}-u_{0,-}^{{}}$ mentioned at
the introduction \ (see the statement of Theorem 1) we shall use the
alternative decomposition $u_{0}=u_{0}^{+}+u_{0}^{-}$ (i.e. $%
u_{0}^{+}=u_{0,+}$ but $u_{0}^{-}=-u_{0,-}^{{}}$).

\begin{proof}
Let $\varphi(x)$ be an arbitrary function from $C^{\infty}_0(\Omega)$. We choose point $\hat P^j_\ee \in \overline{G^j_\ee}$ such that \begin{equation*}
\min\limits_{x\in \overline{G^j_\ee}}\varphi^{+}(x) = \varphi^{+}(\hat P^j_\ee),
\end{equation*}
where $\varphi^{+} = \max\{0, \varphi(x)\}$ and $\varphi^{-}(x) = \varphi(x) - \varphi^{+}(x)$. Define the function
\begin{equation}\label{hom theor aux func W}
\mathcal W_\ee(\varphi^{+}, x) =
\left\{
\begin{array}{ll}
w^j_\ee(\varphi^{+}(\hat P^j_\ee), x), & x\in (T^{j}_{\ee/4})^{+},\quad j\in\Upsilon_\ee\\
0, & x\in \mathbb{R}^n\setminus\bigcup\limits_{j\in\Upsilon_\ee}\overline{(T^{j}_{\ee/4})^{+}}.
\end{array}
\right.
\end{equation}
From estimates \eqref{W estim} we conclude that
\begin{equation}\label{mathcal W conv}
\mathcal W_\ee(\varphi^{+}, x) \rightharpoonup 0
\end{equation}
in $H^1(\Omega, \Gamma_2)$ as $\ee\rightarrow 0$. We set
\begin{equation*}
    v = \varphi^{+} - \mathcal W_\ee(\varphi^{+}, x) + (1 - \kappa_\ee)\varphi^{-}
\end{equation*}
as a test function in integral inequality \eqref{integral inequality} where $\varphi$ is an arbitrary function from $C^{\infty}_0(\Omega)$. Notice that $v \in K_\ee$. Indeed, according to the Lemma~\ref{aux fun w estim parameter} and using that $\kappa_\ee \equiv 1$ in $G_\ee$ we have for all $x\in G^j_\ee$ that
\begin{equation}
v = \varphi^{+} - {\mathcal W_\ee(\varphi^{+}, x)} + (1 - \kappa_\ee)\varphi^{-} \ge \varphi^{+}(\hat P^j_\ee) - w^j_\ee(\varphi^{+}(\hat P^j_\ee), x) \ge 0.
\end{equation}
Hence, we get
\begin{align}
&\int\limits_{\Omega}\Bigg\{ \nabla\Big(\varphi^{+} - \mathcal W_\ee({\varphi^{+}}, x) + (1 - \kappa_\ee)\varphi^{-}\Big)\\
&\qquad \qquad \cdot \nabla\Big(\varphi^{+} - \mathcal W_\ee({\varphi^{+}}, x) + (1 - \kappa_\ee)\varphi^{-} - u_\ee\Big) \Bigg\} dx +\\
&\qquad +\ee^{-k}\sum\limits_{j\in\Upsilon_\ee}\int\limits_{G^j_\ee}\sigma(\varphi^{+} - w^j_\ee(\varphi^{+}(\hat P^j_\ee), x))(\varphi^{+} - w^j_\ee(\varphi^{+}(\hat P^j_\ee), x) - u_\ee)dx'\ge\\
&\quad \ge\int\limits_{\Omega}f(\varphi^{+} - \mathcal W_\ee(\varphi^{+}, x) + (1 - \kappa_\ee)\varphi^{-} - u_\ee)dx.
\end{align}
Considering the first integral of the right-hand side of the inequality above we have:
\begin{align}
&\int\limits_{\Omega}\nabla(\varphi - \mathcal W_\ee({\varphi^{+}}, x) - \kappa_\ee\varphi^{-})\nabla(\varphi - \mathcal W_\ee({\varphi^{+}}, x) - \kappa_\ee\varphi^{-} - u_\ee)dx \nonumber\\
&\quad=\int\limits_{\Omega}\nabla\varphi\nabla(\varphi - \mathcal W_\ee(\varphi^{+}(\hat P^{j}_\ee), x) - \kappa_\ee\varphi^{-} - u_\ee)dx - \nonumber\\
& \qquad -\int\limits_{\Omega}\nabla\mathcal W_\ee({\varphi^{+}}, x) \nabla(\varphi - \mathcal W_\ee({\varphi^{+}}, x) - \kappa_\ee\varphi^{-} - u_\ee) dx  \nonumber\\
& \qquad -\int\limits_{\Omega}\nabla(\kappa_\ee\varphi^{-}) \nabla(\varphi - \mathcal W_\ee({\varphi^{+}}, x) - \kappa_\ee\varphi^{-} - u_\ee) dx \nonumber \\
	\label{J1 J2 J3 introduction}
& \quad = \sum\limits_{i = 1}^{3}J^i_\ee. 
\end{align}

By using \eqref{kappa conv} and \eqref{mathcal W conv} we have
\begin{equation}\label{J_1 limit}
\lim\limits_{\ee\rightarrow0} J^1_{\ee} = \int\limits_{\Omega}\nabla\varphi\nabla(\varphi - u_0) dx.
\end{equation}

Then we proceed by transforming $J^2_\ee$ in the following way
\begin{align}
J^2_\ee &= -\sum\limits_{j\in\Upsilon_\ee}\int\limits_{{(T^j_{\ee/4})^+}}\nabla w^j_{\ee}(\varphi^{+}(\hat P^j_\ee), x) \cdot \nabla(\varphi - w^j_\ee(\varphi^{+}(\hat P^j_\ee), x) - \kappa_\ee\varphi^{-} - u_\ee)dx \nonumber \\
& = - \sum\limits_{j\in\Upsilon_\ee}\int\limits_{{(T^j_{\ee/4})^+}} \Bigg \{ \nabla \Big (w^j_{\ee}(\varphi^{+}(\hat P^j_\ee), x) - { \hat w _\ee^j (\varphi^{+}(\hat P^j_\ee), x)} \Big) \nonumber\\
&\qquad \qquad \qquad \qquad \cdot \nabla\Big (\varphi - w^j_\ee(\varphi^{+}(\hat P^j_\ee), x) - \kappa_\ee\varphi^{-} - u_\ee \Big) \Bigg \} dx -\nonumber \\
&\quad -\sum\limits_{j\in\Upsilon_\ee}\int\limits_{{(T^j_{\ee/4})^+}}\nabla {\hat w_\ee^j(\varphi^{+}(\hat P^j_\ee), x)} \nabla(\varphi - w^j_\ee(\varphi^{+}(\hat P^j_\ee), x) - \kappa_\ee\varphi^{-} - u_\ee) dx \nonumber\\
&= I^1_\ee + I^2_\ee.
\label{J2 decomposition}
\end{align}
Lemma~\ref{lemma w and hat w} implies that
\begin{equation}\label{I1 limit}
I^1_\ee \rightarrow 0 \mbox{ as } \ee\rightarrow 0.
\end{equation}
By using Green's formula we have the following decomposition of the second integral
\begin{align}
I^2_\ee &= - \sum\limits_{j\in\Upsilon_\ee}\int\limits_{{(\partial T^j_{\ee/4})^+}} \Bigg \{\pa_{\nu}\hat w^j_\ee(\varphi^{+}(\hat P^j_\ee), x)  (\varphi^{+} - w^j_\ee(\varphi^{+}(\hat P^j_\ee), x) - u_\ee) \Bigg\} ds \nonumber \\
&\quad - \ee^{-k}\sum\limits_{j\in\Upsilon_\ee}\int\limits_{G^j_\ee} \Bigg\{ \sigma\Big(\varphi^{+}(\hat P
^j_\ee) - { \hat w _\ee^j (\varphi^{+}(\hat P^j_\ee), x)}\Big)  \Big(\varphi^{+} - w^j_\ee(\varphi^{+}(\hat P^j_\ee), x) - u_\ee \Big) \Bigg\} dx' \nonumber \\
&=\mathcal I^1_\ee + \mathcal I^2_\ee.
\label{I2 decomposition}
\end{align}
From Lemma~\ref{lemma ball conv} we have
\begin{equation}\label{mathcal I1 limit}
\lim\limits_{\ee\rightarrow 0}\mathcal I^1_\ee = C_0^{n-2}\int\limits_{\Omega}H(\varphi^{+}(x))(\varphi^{+} - u_0)dx.
\end{equation}
Combining \eqref{J2 decomposition}-\eqref{mathcal I1 limit} implies that
\begin{equation}\label{J2 limit}
\lim\limits_{\ee\rightarrow0}J^2_\ee = C_0^{n-2}\int\limits_{\Omega}H(\varphi^{+}(x))(\varphi^{+} - u_0)dx +  {\lim_{\ee \to 0} \mathcal I^2_\ee }.
\end{equation}
Now we consider third term of the identity~\eqref{J1 J2 J3 introduction}. { By using the fact that
\begin{align*}
    \nabla (\kappa_\ee^j \varphi^-) \cdot \nabla\rho_\ee &= \left( \nabla \kappa_\ee^j \cdot \nabla \rho_\ee \right) \varphi^-+ \kappa_\ee^j \nabla \varphi^- \cdot \nabla \rho_\ee \\
    &=\nabla \kappa_\ee^j \cdot \nabla (\varphi^- \rho_\ee) - (\nabla \kappa_\ee^j \cdot \nabla \varphi^-)\rho_\ee +  \kappa_\ee^j \nabla \varphi^- \cdot \nabla \rho_\ee\\
    &=\nabla \hat \kappa_\ee^j \cdot \nabla (\varphi^- \rho_\ee)-\nabla (\hat \kappa_\ee^j - \kappa_\ee^j) \cdot \nabla (\varphi^- \rho_\ee)  \\
    &\quad - (\nabla \kappa_\ee^j \cdot \nabla \varphi^-)\rho_\ee +  \kappa_\ee^j \nabla \varphi^- \cdot \nabla \rho_\ee.
\end{align*}
we deduce that}
\begin{align}
J^3_\ee &= - \sum\limits_{j\in\Upsilon_\ee}\int\limits_{{ (T^j_{\ee/4})^+}}\nabla { \hat\kappa_\ee^j} \nabla\Big(\varphi^{-}(\varphi - \mathcal W_\ee(\varphi^{+}, x) - \kappa_\ee\varphi^{-} - u_\ee)\Big)dx \nonumber \\
& \quad -\sum\limits_{j\in\Upsilon_\ee}\int\limits_{{ (T^j_{\ee/4})^+}}\nabla(\kappa^j_\ee - { \hat\kappa_\ee^j}) \nabla\Big(\varphi^{-}(\varphi - \mathcal
W_\ee(\varphi^{+}, x) - \kappa_\ee\varphi^{-} - u_\ee)\Big)dx \nonumber \\
&\quad + \int\limits_{\Omega}\Big( \nabla\kappa_\ee \cdot \nabla\varphi^{-} \Big) \Big(\varphi^{-}(\varphi - \mathcal W_\ee(\varphi^{+}, x) - \kappa_\ee\varphi^{-} - u_\ee) \Big)dx \nonumber\\
& \quad - \int\limits_{\Omega}{ \kappa_\ee}\nabla\varphi^{-} \cdot \nabla\Big(\varphi^{-}(\varphi - \mathcal W_\ee(\varphi^{+}, x) - \kappa_\ee\varphi^{-} - u_\ee) \Big) dx \nonumber \\
&= \mathcal Q^1_\ee + \mathcal Q^2_\ee + \mathcal Q^3_\ee + \mathcal Q^4_\ee.
\label{J3 decomposition}
\end{align}
Lemma~\ref{lemma estim kappas} implies that $\mathcal Q^2_\ee \rightarrow 0$ as $\ee\rightarrow 0$. Then we use \eqref{u epsilon estim}, \eqref{u0 def}, \eqref{kappa conv} and \eqref{mathcal W conv} to derive that $\mathcal Q^3_\ee \rightarrow 0$ and $\mathcal Q^4_\ee\rightarrow0$ as $\ee\rightarrow0$. Then we transform $\mathcal Q^1_\ee$ using the Green's formula
\begin{equation}\label{Q1 decomposition}
\begin{gathered}
\mathcal Q^1_\ee = -\sum\limits_{j\in\Upsilon_\ee}\int\limits_{{ (\pa T^j_{\ee/4})^+}}(\pa_\nu{ \hat\kappa_\ee^j}) \varphi^{-}(\varphi - \mathcal W_\ee(\varphi^{+}, x) - \kappa_\ee\varphi^{-} - u_\ee) ds -\\
-\sum\limits_{j\in\Upsilon_\ee}\int\limits_{{G_\ee^j}}(\pa_\nu{ \hat\kappa_\ee^j}) \varphi^{-}(\varphi - \mathcal W_\ee(\varphi^{+}, x) - \kappa_\ee\varphi^{-} - u_\ee) dx'.
\end{gathered}
\end{equation}
By using the fact that $\pa_\nu{\hat\kappa \ge 0}$ and $\varphi^{-}\mathcal W_\ee(\varphi^{+}, x) \le 0, \, \varphi^{-}u_{\ee} \le 0$ a.e. in $G_\ee$ we have that
\begin{equation}
\begin{gathered}
-\sum\limits_{j\in\Upsilon_\ee}\int\limits_{{ G_\ee^j}}(\pa_\nu{ \hat\kappa_\ee^j}) \varphi^{-}(\varphi - \mathcal W_\ee(\varphi^{+}, x) - \kappa_\ee\varphi^{-} - u_\ee) dx' \le 0.
\end{gathered}
\end{equation}
Hence, we conclude
\begin{equation}\label{Q1 estim}
\mathcal Q^1_\ee \le -\sum\limits_{j\in\Upsilon_\ee}\int\limits_{\pa T^j_{\ee/4}\cap\Omega}(\pa_\nu{ \hat\kappa_\ee^j}. )\varphi^{-}(\varphi - u_\ee) ds
\end{equation}
Lemma~\ref{lemma aux hom kappa} implies
\begin{equation}\label{J3 limit}
\lim\limits_{\ee\rightarrow 0}J^3_{\ee} = \lim\limits_{\ee\rightarrow 0}\mathcal Q^1_\ee \le \lambda C^{n-2}_0\int\limits_{\Omega}\varphi^{-}(\varphi - u_\ee) dx
\end{equation}
From \eqref{J1 J2 J3 introduction}, \eqref{J_1 limit}, \eqref{J2 limit}, \eqref{J3 limit} we derive that
\begin{align}
&\int\limits_{\Omega}\nabla\varphi \nabla(\varphi - u_0)dx + C^{n - 2}_0 \int\limits_{\Gamma_1}H(\varphi^{+})(\varphi - u_0)dx' + \lambda C^{n - 2}_0 \int\limits_{\Gamma_1}\varphi^{-}(\varphi - u_0)dx' \nonumber \\
&\qquad + \lim\limits_{\ee\rightarrow 0}\Bigg\{ \ee^{-k}\sum\limits_{j\in\Upsilon_\ee}\int\limits_{G^j_\ee}\sigma\left( \varphi^{+} - \mathcal W_\ee({\varphi^{+}}, x)\right) (\varphi^{+} - {\mathcal W_\ee({\varphi^{+}}, x)} - u_\ee)dx'  \nonumber \\
&\qquad\qquad  -\ee^{-k}\sum\limits_{j\in\Upsilon_\ee}\int\limits_{G^j_\ee}\sigma\left (\varphi^{+}(\hat P^j_\ee) - {\hat  w^j_\ee(\varphi^{+}(\hat P^j_\ee), x)} \right )(\varphi^{+} - {\mathcal W_\ee({\varphi^{+}}, x)} - u_\ee)  dx'\Bigg \} \nonumber \\
&\quad \ge \int\limits_{\Omega}f (\varphi - u_0)dx.
\label{pre-final inequality}
\end{align}
{We first notice that $\varphi^{+} - w^j_\ee(\varphi^{+}(\hat P^j_\ee),x) \ge \varphi^{+}(\hat P_\ee^j) - \widehat w^j_\ee(\varphi^{+}(\hat P^j_\ee,x))$ and so
\begin{equation*}
     \Bigg\{ \sigma\left( \varphi^{+} - w^j_\ee(\varphi^{+}(\hat P^j_\ee), x)\right) - \sigma\left (\varphi^{+}(\hat P^j_\ee) - {\hat  w^j_\ee(\varphi^{+}(\hat P^j_\ee), x)} \right ) \Bigg\} u_\ee \ge 0.
\end{equation*}
Thus, we only need to study the term
\begin{equation*}
    \ee^{-k} \int\limits_{G_\ee^j }\Bigg\{ \sigma\left( \varphi^{+} - w^j_\ee(\varphi^{+}(\hat P^j_\ee), x)\right) - \sigma\left (\varphi^{+}(\hat P^j_\ee) - {\hat  w^j_\ee(\varphi^{+}(\hat P^j_\ee), x)} \right ) \Bigg\} \Big(\varphi^{+} - w^j_\ee(\varphi^{+}(\hat P^j_\ee), x) \Big) dx'.
\end{equation*}
On the other hand,
\begin{align*}
    |\varphi^+ - {\mathcal W_\ee({\varphi^{+}}, x)}| &\le |\varphi^+| \le K, \\
    w^j_\ee(\varphi^{+}(\hat P^j_\ee), x) \le \hat w^j_\ee(\varphi^{+}(\hat P^j_\ee), x)| &\le |\varphi^+| \le K.
\end{align*}
Since $\sigma$ is Hölder continuous, and $\varphi \in \mathcal C^1 (\overline {\Omega})$},  we have that, for a.e. $x\in G^j_\ee$, 
\begin{align*}
\Big |\sigma(\varphi^{+} - {\mathcal W_\ee({\varphi^{+}}, x)}) &- \sigma(\varphi^{+}(\hat P^j_\ee) - {\mathcal W_\ee({\varphi^{+}}, x)}) \Big|\\
&\le K { \sum_{i=1}^2 |\varphi^{+}(x) - \varphi^{+}(\hat P^j_\ee){|^{\rho_i}} }\le K { \sum_{i=1}^2 |x - \hat P^j_\ee{|^{\rho_i}}}\\
&= K {\sum_{i=1}^2 {a_\ee^{\rho_i}}}.
\end{align*}
By using the same reasoning, estimate \eqref{W estim} implies that
\begin{gather}
\bigg|\ee^{-k}\sum\limits_{j\in\Upsilon_\ee}\int\limits_{G^j_\ee}\Big (\sigma(\varphi^{+} - {\mathcal W_\ee({\varphi^{+}}, x)}) - \sigma(\varphi^{+}(\hat P^j_\ee) - {\mathcal W_\ee({\varphi^{+}}, x)})\Big){\Big(\varphi^{+} - w^j_\ee(\varphi^{+}(\hat P^j_\ee), x) \Big)} dx'\bigg| \nonumber \\
\le K_2 { \ee^{-k} |G_\ee|} {\sum_{i=1}^2 a_\ee^{\rho_i}} \le K { \sum_{i=1}^2 a_\ee^{\rho_i}}.
\label{eq:use Holder 1}
\end{gather}
Then by Lemma~\ref{lemma w and hat w} we have that

\begin{align}
\ee^{-k}&\bigg|\sum\limits_{j\in\Upsilon_\ee}\int\limits_{G^j_\ee}\Big(\sigma(\varphi^{+}(\hat P^j_\ee) - {\mathcal W_\ee({\varphi^{+}}, x)}) - \sigma(\varphi^{+}(\hat P^j_\ee) - {\hat w_\ee^j})\Big)\Big(\varphi^{+} - {\mathcal W_\ee({\varphi^{+}}, x)} \Big) dx' \bigg| \nonumber\\
&\le K\ee^{-k}\sum\limits_{j\in\Upsilon_\ee}{ \sum_{i=1}^2 \int\limits_{G^j_\ee}|v(\varphi^{+}(\hat P^j_\ee), x) |^{\rho_i} dx' } \nonumber\\
&\le K\ee^{-k}\sum\limits_{j\in\Upsilon_\ee}|G_\ee^j| { \sum_{i=1}^2  \frac{1}{|G_\ee^j|} \int\limits_{G^j_\ee}|v(\varphi^{+}(\hat P^j_\ee), x) |^{\rho_i} dx'} \nonumber\\
\intertext{which, applying the $L^2(G_0) \to L^{\rho_i} (G_0)$ embedding for $0 < \rho_i \le 2$, can be estimated as}
...&\le K \ee^{-k}\sum\limits_{j\in{\Upsilon_\ee}} |G_\ee^j| { \sum_{i=1}^2 \left( \frac{1}{|G_\ee^j| } \int_{G_\ee^j} | v^j_\ee (\varphi^{+}(\hat P^j_\ee), x) |^2 dx ' \right) ^{\frac {\rho_i} 2}  } \nonumber \\
\intertext{and, by {Lemma \ref{lem:trace theorem vee}}}
...&\le K \ee^{-k}\sum\limits_{j\in{\Upsilon_\ee}} |G_\ee^j|{ \sum_{i=1}^2 \ee ^ {\frac {\rho_i} 2 } } \le K \ee^{-k}|\Upsilon_{\ee}||G_\ee^j| { \sum_{i=1}^2 \ee ^ {\frac {\rho_i} 2 } } \le K \ee^{-k + 1 - n + k(n-1)}{ \sum_{i=1}^2 \ee ^ {\frac {\rho_i} 2 } } \nonumber \\
&= K { \sum_{i=1}^2 \ee ^ {\frac {\rho_i} 2 } } \to 0,
\label{estim second term}
\end{align}
by using that $0 < \rho \le 2$.
Combining these estimates with \eqref{pre-final inequality} we derive{, since $\rho > 0$,} that
\begin{align}
\int\limits_{\Omega}\nabla\varphi\nabla(\varphi - u_0)dx &+ C^{n-2}_0\int\limits_{\Gamma_1}H(\varphi^{+})(\varphi - u_0)dx' +  \lambda C^{n-2}_0\int\limits_{\Gamma_1}\varphi^{-}(\varphi - u_0) \nonumber \\
&\ge \int\limits_{\Omega}f(\varphi - u_0)dx
\label{theorem int ident}
\end{align}
holds for any $\varphi \in H^1(\Omega, \Gamma_2)$.

Finally, given $\psi\in H^1(\Omega, \Gamma_2)$ we consider the test function $\varphi = u_0 \pm {\delta} \psi$, ${\delta}>0$ in \eqref{theorem int ident} and we pass to the limit as ${\delta}\rightarrow0$. By doing so we get that $u_0$ satisfies the integral condition
\begin{equation}
\int\limits_{\Omega}\nabla u_0 \nabla \psi dx + C^{n-2}_0 \int\limits_{\Gamma_1}H(u^{+}_0)\psi dx' + \lambda C^{n-2}_0\int\limits_{\Gamma_1}u^{-}_0\psi dx' = \int\limits_{\Omega}f\psi dx
\end{equation}
for any $\psi\in H^1(\Omega, \Gamma_2)$. This concludes the proof.
\end{proof}

\section{Possible extensions and comments}

\subsection{Extension to the case of $\protect\sigma $ as a maximal monotone graph}
\label{sec:mmg}

In \cite{DiGoPoSh Nonlin Anal} the authors showed that a similar problem,
although restricted to the case of spherical particles distributed through the whole domain, could be treated in
the general framework of maximal monotone graphs $\sigma$, which allow for a
\emph{common roof} between the Dirichlet, Neumann and Signorini boundary
conditions and many more.
We have restricted here to the
case of Hölder continuous $\sigma$ (see \eqref{eq:sigma regularity}) but
this condition is only used at very end, in estimates \eqref{eq:use Holder
1} and \eqref{estim second term} to compute the last term of
\eqref{pre-final inequality}. The superlinearity condition is only used to
obtain Lemma \ref{lem:trace theorem vee}. These seem to be only technical
difficulties, and can probably be avoided. Let us introduce what results
can be expected, if these problems could be circumvented.

\paragraph{Maximal monotone graph of $\mathbb R^2$} 
	A monotone graph of $\mathbb R^2$ is a map (or operator) $\sigma: D(\sigma) \subset \mathbb R \to \mathcal P (\mathbb R) \setminus \{ \emptyset \}$ such that
    \begin{equation}
    (\xi_1 - \xi_2) (x_1 - x_2) \ge 0, \qquad \forall x_i \in D(\sigma), \forall \xi_i \in \sigma(x_i).
    \end{equation}
    The set $D(\sigma)$ is called domain of the multivalued operator $\sigma$. Some authors define maximal monotone graphs as maps $\sigma:  \mathbb R \to \mathcal P (\mathbb R)$ and define $D(\sigma) = \{ x \in \Omega : \sigma(x) \ne \emptyset \}$.

    A monotone graph $\sigma$ is extended by another monotone graph $\tilde \sigma$ if $D(\sigma) \subset D(\tilde \sigma)$ and $\sigma(x) \subset \tilde \sigma(x)$ for all $x \in D (\sigma)$. A monotone graph is called maximal if its admits no proper extension. For further reference see \cite{Brezis:1973}.

\paragraph{Definition of solution}

The solution $u_\varepsilon$ is also well defined, although the set $K_\varepsilon$ must now be written as
\begin{equation*}
K_\varepsilon = \{ v \in H^1 (\Omega, \Gamma_2) : \forall x^{\prime }\in
G_\varepsilon , v (x^{\prime }) \in D(\sigma) \}.
\end{equation*}
We will have that the integral condition \eqref{integral inequality} turns into
\begin{equation}
\int\limits_{\Omega}\nabla\varphi\nabla(\varphi - u_\varepsilon)dx +
\varepsilon^{-k}\int\limits_{G_\varepsilon}\xi (x)(\varphi -
u_\varepsilon)dx^{\prime }\ge \int\limits_{\Omega}f(\varphi -
u_\varepsilon)dx,
\end{equation}
for all $\varphi \in K_\varepsilon$ and $\xi \in L^2 (G_\varepsilon)$ such
that $\xi(x^{\prime }) \in \sigma(\varphi(x^{\prime }))$ for a.e. $x^{\prime}\in G_\varepsilon$. Existence and uniqueness of this solutions follows as
in \cite{DiGoPoSh Nonlin Anal} and the references therein.

\paragraph{The auxiliary functions}

The equation of $\hat{w}$ is well-defined when $\sigma $ is a maximal monotone
graph. As we have proved in this paper, the estimate $0\leq H^{\prime
}\leq \lambda _{G_{0}}$ is independent of $\sigma $, and so $H$ is Lipschitz
continuous for any maximal monotone graph $\sigma $.

\paragraph{Signorini boundary conditions}

This is the case under study in this paper. Nonetheless, let us study in the
general setting. For this kind of boundary condition, we need to consider
the following maximal monotone graph:
\begin{equation}
	\label{eq:sigma Signorini and nonlinear}
\tilde{\sigma}(s)=\begin{dcases} \sigma (s) & s > 0 , \\ (-\infty,0] & s = 0, \\
\emptyset & s < 0
\end{dcases}
\end{equation}%
and $D(\sigma )=[0,+\infty )$. Let us compute $\tilde{H}_{G_{0}}$ in this
setting:

\begin{itemize}
\item For $u<0$, we can see what happens explicitly. We have that $u\leq
\hat{w}(u,\cdot )\leq 0$. Thus $u-\hat{w}\leq 0$. Since $D(\sigma
)=[0,+\infty )$ we must have that $\hat{w}(y;u)=u$ on $G_{0}$. But then $%
\hat{w}(y;u)=u\hat{\kappa}(y)$. Hence $\widetilde{H}_{G_{0}}(u)=\lambda
_{G_{0}}u$ when $u<0$.

\item When $u>0$, we have that $0\leq u-\hat{w}(u,\cdot )$. Thus, only the
values of $\sigma $ affect $H_{G_{0}}(u)$.
\end{itemize}

We conclude that
\begin{equation*}
	\widetilde{H}_{G_{0}}(u)=%
	\begin{cases}
	H_{G_{0}}(u) & u>0, \\
	\lambda _{G_{0}}u & u\leq 0.%
	\end{cases}%
\end{equation*}%
The computations with maximal monotone graphs yield precisely Theorem \ref%
{thm:1}. Notice that the bound on $\tilde{H}^{\prime }$ given by
\eqref{eq:H
prime bounds} is sharp.

\paragraph{Dirichlet boundary conditions}

In this case, we would have $D(\sigma )=\{0\}$ and
\begin{equation*}
\tilde{\sigma}(0)=(-\infty ,+\infty )
\end{equation*}%
By the same reasoning, we have that
\begin{equation*}
\tilde{H}_{G_{0}}(u)=\lambda _{G_{0}}u,
\end{equation*}%
for all $u\in \mathbb{R}$. In this case of Dirichlet boundary conditions the
critical case generates a linear term in the homogenized equation. This type
of phenomena was already notice by the authors of \cite{Cioranescu+Murat:1997}.

\paragraph{Cases of finite and infinite permeable coefficient}
The Signorini boundary condition imposed as a maximal monotone graph \eqref{eq:sigma Signorini and nonlinear} is the extreme case of \emph{infinite permeability}, aiming to represent the behaviour of very large \emph{finite} permeability given by a reaction term of the form
\begin{equation}
	\widetilde \sigma_ \mu (u) = \begin{dcases}
		\sigma(u) & u > 0, \\
		\mu u & u \le 0,
	\end{dcases}
\end{equation}
where $\mu$ is very large. As in Remark \ref{rem:linear sigma implies linear w and H} it is easy to show that the corresponding kinetic will be of the form
\begin{equation}
	\widetilde H_\mu (u) = \begin{dcases}
	H (u) & u > 0, \\
	\lambda_\mu u & u \le 0.
	\end{dcases}
\end{equation}
Furthermore, since we have proven that the Signorini boundary condition is an extremal case (i.e. $\widetilde H'(u) \le \lambda_{G_0}$) we have that $\lambda_\mu \le \lambda_{G_0}$.

\subsection{On the super-linearity condition}

The condition
\begin{equation*}
|\sigma(s) - \sigma(t)| \ge k_1 |s-t|
\end{equation*}
is only used in the proof of Lemma \ref{lem:trace theorem vee}. However, it
is our belief that this condition can be remove and still obtain the result.
We provide here a proof for $n=3$ and $G_0$ a ball.

We define the auxiliary function $w_{\varepsilon }$ unique solution of
\begin{equation}
\begin{dcases} -\Delta w_\ee^j = 0 & T_\ee^0 , \\ w_\ee = 0
& G_\ee^0\\ w_\ee = 1 & \partial T_\ee^0 \end{dcases}
\end{equation}%
where $T_{\varepsilon }^{0}$ is given by
\begin{equation}
T_{\varepsilon }^{0}=\left\{ x\in \mathbb{R}^{3}:\frac{x_{1}^2}{%
1-(a_{\varepsilon }\varepsilon ^{-1})^{2}}+x_{2}^{2}+x_{3}^{2}<\varepsilon
^{2}\right\} .
\end{equation}%
Using prolate ellipsoidal coordinates we can give an explicit expression of $%
w_{\varepsilon }$. These coordinates are given by
\begin{align}
x_{1}& =a_{\varepsilon }\sinh \psi \cos \theta _{1}, \\
x_{2}& =a_{\varepsilon }\cosh \psi \sin \theta _{1}\cos \theta _{2}, \\
x_{3}& =a_{\varepsilon }\cosh \psi \sin \theta _{1}\sin \theta _{2},
\end{align}%
where $0\leq \psi <\infty $, $0\leq \theta _{1}\leq \pi $ and $0\leq \theta
_{2}\leq 2\pi $. Defining $\sigma =\sinh \psi $ it can be proven through
symmetry that $w_{\varepsilon }(x)=V_{\varepsilon }(\sigma )$. Furthermore, $%
V_{\varepsilon }$ is the unique solution of the one-dimensional problem
\begin{equation}
\begin{dcases} \frac{d}{d\sigma} \left( (1 + \sigma^2) \frac{dV}{d\sigma}
\right) = 0 & \sigma \in \left( 0, \sqrt{ (a_\ee^{-1} \ee)^2 - 1} \right),
\\ V(0) = 0 , \\ V\left( \sqrt{ (a_\ee^{-1} \ee)^2 - 1} \right) = 1.
\end{dcases}
\end{equation}%
By integrating this simple one dimensional boundary value problem
\begin{equation}
V(\sigma )=\frac{\arctan \sigma }{\arctan \left( \sinh \sqrt{(a_{\varepsilon
}^{-1}\varepsilon )^{2}-1}\right) }.
\end{equation}%
Since we can recover from the change in variable
\begin{equation}
\sigma =\sinh \psi =\sqrt{\frac{|x|^{2}-a_{\varepsilon }^{2}+\sqrt{%
(a_{\varepsilon }^{2}-|x|^{2})^{2}+4x_{1}^{2}a_{\varepsilon }^{2}}}{%
2a_{\varepsilon }^{2}}}.
\end{equation}%
Due to mirror symmetry it is clear that $\partial _{x_{1}}w_{\varepsilon
}|_{\{x_{1}=0\}}=0$. Thus, we have
\begin{equation}
\int_{(T_{\varepsilon }^{0})^{+}}\nabla w_{\varepsilon }\nabla
(u^{2})dx=\int_{(\partial T_{\varepsilon }^{0})^{+}}\partial_{x_1} w_\ee u^2 dS -\int_{G_{\varepsilon
}^{0}}\partial _{x_{1}}w_{\varepsilon }u^{2}dS.
\end{equation}%
Using the explicit expression of $w_{\varepsilon }$ we can compute that
\begin{align}
\partial _{\nu }w_{\varepsilon }|_{(\partial T_{\varepsilon }^{0})^{+}}&
\sim { a_\ee \ee^{-2} } \\
\partial _{\nu }w_{\varepsilon }|_{G_{\varepsilon }^{0}}& \sim -{\ \frac{1}{%
\sqrt{a_{\varepsilon }^{2}-|x|^{2}}}}.
\end{align}

Now let
\begin{align}
T_\varepsilon^j &= P_\varepsilon^j + T_\varepsilon^0, \\
T_\varepsilon &= \bigcup_{ j \in \Upsilon_\varepsilon } T_\varepsilon^j, \\
W_\varepsilon (x) & = w_\varepsilon (x - P_\varepsilon^j) \quad \text{ for }
x \in T_\varepsilon^j.
\end{align}
Adding over $\Upsilon_\varepsilon$ we deduce that
\begin{equation}  \label{eq:superlinearity (5)}
\int_{ (T_\varepsilon)^+ } \nabla W_\varepsilon \nabla (u^2) ds = \sum_{j
\in \Upsilon_\varepsilon} \int_{(\partial T_\varepsilon^j)^+} \partial_\nu
w_\varepsilon^j u^2 ds - \int_{G_\varepsilon} \partial_{x_1} W_\varepsilon
u^2 ds.
\end{equation}
It is easy to prove that
\begin{equation}  \label{eq:superlinearity (6)}
\int_{(\partial T_\varepsilon^j)^+} \partial_\nu w_\varepsilon^j h^2 ds \le
K \sum_{j \in \Upsilon_\varepsilon} \| h \|^2_{H^1((T_\varepsilon^j)^+)}
\end{equation}
for any $h \in H^1 (\Omega)$. We now apply that
\begin{equation}  \label{eq:superlinearity (7)}
\| u \|_{L^2 (G_\varepsilon)}^2 \le K \left( \varepsilon^{-1} \| u \|_{L^2
(T_\varepsilon^+)} + \varepsilon \| \nabla u \|^2 _{L^2 (T_\varepsilon^+)}
\right).
\end{equation}
With this \eqref{eq:superlinearity (5)}, \eqref{eq:superlinearity (6)}, %
\eqref{eq:superlinearity (7)} we can prove Lemma \ref{lem:trace theorem vee}
for $k_1 = 0$.

\subsection{Connections to fractional operators}
	Let us consider a domain $\Omega =\Omega ^{\prime }\times (0,+\infty )$ where $\Omega' \subset \mathbb R^{n-1}$ is a smooth bounded domain.
	Then $\Gamma _{1}=\Omega ^{\prime }$ and $\Gamma _{2}=\partial \Omega \times
	(0,+\infty )$. The related problem
	\begin{equation}
	\begin{dcases}
	-\Delta u_{\varepsilon }=0, & (x,y)\in \Omega' \times (0,+\infty ), \\
	\partial _{\nu }u_{\varepsilon }+\varepsilon ^{-k}\sigma (u_{\varepsilon
	})=\varepsilon ^{-k}g_{\varepsilon }, & x\in G_{\varepsilon } \\
	\partial _{\nu }u_{\varepsilon }=0, & x\in \Omega ^{\prime }\setminus
	\overline{G}_{\varepsilon }, \\
	u_{\varepsilon }=0, & (x,y)\in \partial \Omega ^{\prime }\times (0,+\infty ),\\
	u_\ee \to 0 & |y| \to + \infty.
	\end{dcases}%
	\label{modified problem}
	\end{equation}%
	is very relevant because it can be linked with the study of the fractional
	Laplacian $(-\Delta )^{\frac{1}{2}}$. In fact, the boundary conditions on $%
	\Omega ^{\prime }$ can be written compactly as
	\begin{equation}
	\partial _{\nu }u_{\varepsilon }+\varepsilon ^{-k}\chi _{G_{\varepsilon
		}}\sigma (u_{\varepsilon })=\varepsilon ^{-k}\chi _{G_{\varepsilon
	}}g_{\varepsilon }\qquad x\in \Omega ^{\prime }
	\label{eq:normal derivative in Omega prime}
	\end{equation}%
	where $\chi $ is the indicator function. This boundary condition can be
	written as an equation of $\Omega ^{\prime }$ not involving the interior
	part of the domain, $\Omega ^{\prime }\times (0,+\infty )$, by understanding
	the normal derivative of problem \eqref{modified problem} as the fractional
	Laplace operator $(-\Delta )^{\frac{1}{2}}$ in $\Omega ^{\prime }$ (see \cite%
	{Caffarelli-Mellet}, \cite{DGCV} and their references). Then %
	\eqref{eq:normal derivative in Omega prime} can be written as
	\begin{equation}
	(-\Delta )^{\frac{1}{2}}u_{\varepsilon }+\varepsilon ^{-k}\chi
	_{G_{\varepsilon }}\sigma (u_{\varepsilon })=\varepsilon ^{-k}\chi
	_{G_{\varepsilon }}g_{\varepsilon }\qquad x\in \Omega ^{\prime }
	\label{eq:fractional laplace equation}
	\end{equation}%
	Thus, the study of the limit of \eqref{modified problem} will provide an
	homogenization result for \eqref{eq:fractional laplace equation}. Applying
	similar techniques to this paper and previous results in the literature \cite%
	{DiGoPoSh Math Anal}, the homogenized problem
	\begin{equation}
	(-\Delta )^{\frac{1}{2}}u_{0}+CH(x,u_{0})=Ch\qquad x\in \Omega ^{\prime }
	\end{equation}%
	is expected, where $H$ and $h$ will depend on $\sigma $ and $g_{\varepsilon
	} $. This could provide some new results of critical size homogenization for
	the fractional Laplacian (in the spirit of the important work \cite%
	{Caffarelli-Mellet}, where some random aspects on the net, and for a general
	fractional power of the Laplacian, are also considered).

\part{The case $n =2$}

\section{ Proof of Theorem \protect\ref{thm:2}}

It's well known that problem \eqref{basic set n=2},
\eqref{integral identity
n=2} has a unique weak solution $u_{\varepsilon}\in H^{1}(\Omega,\Gamma_2)$.
By using \eqref{integral identity n=2} and conditions \eqref{sigma cond},
that was set on the function $\sigma$, we get the following estimates
\begin{equation}  \label{u epsilon estim n=2}
\Vert\nabla u_\varepsilon\Vert_{L_2(\Omega)} \le K, \qquad e^{\frac{\alpha^2%
}{\varepsilon}}\Vert u_\varepsilon\Vert^2_{L_2(G_\varepsilon)}\le K_1,
\end{equation}
here and below, constants $K,\, K_1$ are independent of $\varepsilon$.

Hence there exists subsequence (denote as the original sequence $%
u_\varepsilon$) such that as $\varepsilon\rightarrow0$ we have
\begin{equation}  \label{u0 def n=2}
\begin{split}
u_\varepsilon \rightharpoonup u_0 \mbox{ weakly in } H^1_0(\Omega), \\
u_\varepsilon \rightarrow u_0 \mbox{ strongly in } L_2(\Omega).
\end{split}%
\end{equation}
We introduce auxiliary functions $w^{j}_{\varepsilon}$ and $%
q^{j}_{\varepsilon}$ as a weak solution to the following problems
\begin{equation}  \label{prob w n=2}
\left\{%
\begin{array}{lr}
\Delta{w^{j}_{\varepsilon}}=0, & x\in T^{j}_{{\varepsilon}/4}\setminus
\overline{T^{j}_{a_{\varepsilon}}}, \\
w^{j}_{\varepsilon}=1, & x\in \partial{T^{j}_{a_{\varepsilon}}}, \\
w^{j}_{\varepsilon}=0, & x\in \partial{T^{j}_{{\varepsilon}/4}},%
\end{array}%
\right.
\end{equation}
and
\begin{equation}  \label{prob q n=2}
\left\{%
\begin{array}{lr}
\Delta{q^{j}_{\varepsilon}}=0, & x\in T^{j}_{\varepsilon}\setminus \overline{%
l^{j}_{\varepsilon}}, \\
q^{j}_{\varepsilon}=1, & x\in l^{j}_{\varepsilon}, \\
q^{j}_{\varepsilon}=0, & x\in \partial{T^{j}_{{\varepsilon}/4}}.%
\end{array}%
\right.
\end{equation}

\begin{figure}[h]
\centering
\includegraphics[width=.5\textwidth]{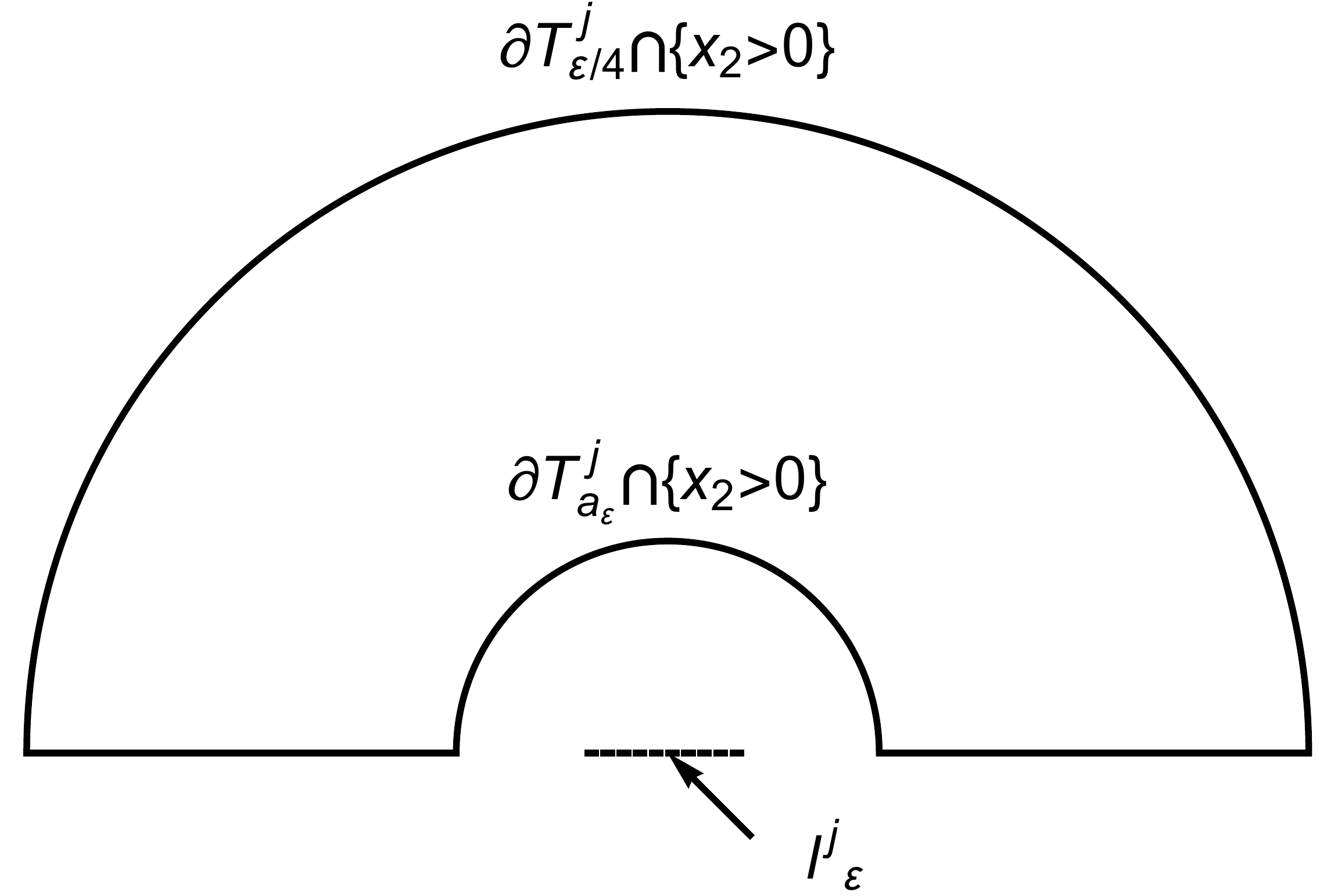}
\caption{Domain $T^{+,j}_\protect\varepsilon\setminus\overline{T^j_{a_%
\protect\varepsilon}}$ and $l^j_\protect\varepsilon$.}
\end{figure}
Note that $w^{j}_{\varepsilon}$ and $q^{j}_{\varepsilon}$ are also a
solutions of the boundary value problems in the domains $(T^{j}_{{\varepsilon%
}/4})^{+}\setminus\overline{T^{j}_{a_{\varepsilon}}}$ and $(T^{j}_{{%
\varepsilon}/4})^{+}$ respectively, where $(T^{j}_{r})^{+}=T^{j}_{r}\cap%
\{x_{2}>0\}$,

\begin{align}  \label{prob w var2 n=2}
		&
	\begin{dcases}
	\Delta{w^{j}_{\varepsilon}}=0, & x\in (T^{j}_{{\varepsilon}/4})^{+}\setminus
	\overline{T^{j}_{a_{\varepsilon}}}, \\
	w^{j}_{\varepsilon}=0, & x\in \partial{T^{j}_{{\varepsilon}/4}}\cap
	\{x_{2}>0\}, \\
	w^{j}_{\varepsilon}=1, & x\in \partial{T^{j}_{a_{\varepsilon}}}%
	\cap\{x_{2}>0\}, \\
	\partial_{x_{2}}w^{j}_{\varepsilon}=0, & x\in \{x_{2}=0\}\cap(\partial{%
	T^{j}_{{\varepsilon}/4}\setminus\overline{T^{j}_{a_{\varepsilon}}}}),%
	\end{dcases}%
	\\
	 \label{prob q var2 n=2}
	&%
	\begin{dcases}
	\Delta{q^{j}_{\varepsilon}}=0, & x\in (T^{j}_{{\varepsilon}/4})^+, \\
	q^j_\ee = 0 & x \in (\partial T_{\ee / 4}^j)^+ \\
	q^{j}_{\varepsilon}=1, & x\in l^{j}_{\varepsilon}, \\
	\partial_{x_{2}}q^{j}_{\varepsilon}=0, & x\in (T^{j}_{{\varepsilon}%
	/4}\cap\{x_{2}=0\})\setminus\overline{l^{j}_{\varepsilon}},%
	\end{dcases}%
\end{align}
where $j\in \Upsilon_{\varepsilon}$, $l^{j}_{\varepsilon}=a_{\varepsilon}%
\hat{l}_{0}+{\varepsilon}j$.
Define
\begin{equation}  \label{prob W n=2}
W_{\varepsilon}(x)=\left\{%
\begin{array}{lr}
w^{j}_{\varepsilon}(x), & x\in (T^{j}_{{\varepsilon}/4})^{+}\setminus
\overline{T^{j}_{a_{\varepsilon}}}, j\in \Upsilon_{\varepsilon}, \\
1, & x\in (T^{j}_{a_{\varepsilon}})^{+}, \\
0, & x\in \mathbb{R}^{2}_{+}\setminus \bigcup_{j\in \Upsilon_{\varepsilon}}%
\overline{T^{j}_{{\varepsilon}/4}},%
\end{array}%
\right.
\end{equation}
where $\mathbb{R}^{2}_{+}=\{x_{2}>0\}$,

\begin{equation}  \label{prob Q n=2}
Q_{\varepsilon}(x)=\left\{%
\begin{array}{lr}
q^{j}_{\varepsilon}(x), & x\in (T^{j}_{{\varepsilon}/4})^{+}, \,j\in
\Upsilon_{\varepsilon}, \\
0, & x\in \mathbb{R}^{2}_{+}\setminus \bigcup_{j\in \Upsilon_{\varepsilon}}%
\overline{T^{j}_{{\varepsilon}/4}}^{+}.%
\end{array}%
\right.
\end{equation}
We have $W_{\varepsilon}, Q_{\varepsilon}\in H^{1}_{0}(\Omega)$ and
\begin{equation}  \label{W conv n=2}
W_{\varepsilon}\rightharpoonup 0, \,\,\,\,\mbox{weakly in}
\,\,\,H^{1}_{0}(\Omega),\,\,{\varepsilon}\to 0.
\end{equation}

\begin{lemma}
\label{lemma estim w-q n=2} Let $W_{\varepsilon}$ be a function defined by
the formula \eqref{prob W n=2}, $Q_{\varepsilon}$ be a function defined by
the formula \eqref{prob Q n=2}. Then
\begin{equation}  \label{W-Q estimation n=2}
\Vert{W_{\varepsilon}-Q_{\varepsilon}}\Vert_{H^{1}(\Omega)}\le K\sqrt{%
\varepsilon}.
\end{equation}
\end{lemma}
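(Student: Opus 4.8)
The plan is to estimate $\Vert W_\ee-Q_\ee\Vert_{H^1(\Omega)}$ cell by cell and then sum over the $O(\ee^{-1})$ cells $T^j_{\ee/4}$, $j\in\Upsilon_\ee$. Since $W_\ee$ and $Q_\ee$ vanish outside $\bigcup_{j\in\Upsilon_\ee}\overline{T^j_{\ee/4}}$, and $W_\ee\equiv1$ (hence $\nabla W_\ee=0$) on each $(T^j_{a_\ee})^+$, one has the splitting
\begin{equation*}
\Vert\nabla(W_\ee-Q_\ee)\Vert^2_{L_2(\Omega)}=\sum_{j\in\Upsilon_\ee}\Bigl(\int_{(T^j_{a_\ee})^+}|\nabla q^j_\ee|^2\,dx+\int_{(T^j_{\ee/4})^+\setminus\overline{T^j_{a_\ee}}}|\nabla(w^j_\ee-q^j_\ee)|^2\,dx\Bigr).
\end{equation*}
I would show that each bracket is bounded by $K/L^2$, where
\begin{equation*}
L:=\log\frac{\ee}{4a_\ee}=\log\frac{1}{4C_0}+\frac{\alpha^2}{\ee},\qquad\text{so that } L\ge c\,\alpha^2\ee^{-1}\text{ for }\ee\text{ small}.
\end{equation*}

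The two ingredients are: first, the explicit form of $w^j_\ee$ — extending it by even reflection across $\{x_2=0\}$ gives the radial harmonic function $w^j_\ee(x)=L^{-1}\log\bigl(\ee/(4|x-\ee j|)\bigr)$ on the annulus $\{a_\ee<|x-\ee j|<\ee/4\}$, so that $|\pa_\nu w^j_\ee|=(La_\ee)^{-1}$ on $\pa T^j_{a_\ee}$ and the half-annulus energy of $w^j_\ee$ equals $\pi/L$; second, a comparison of $q^j_\ee$ with a radial profile near $\pa T^j_{a_\ee}$. Because $l^j_\ee\subset T^j_{a_\ee l_0}$ with $l_0<1/2$, and because (reflecting $q^j_\ee$) the logarithmic capacity of a segment of length $2a_\ee l_0$ equals that of a disc of radius $a_\ee l_0/2$ (Joukowski map; equivalently, sandwiching $q^j_\ee$ between radial potentials of discs comparable with $T^j_{a_\ee l_0}$ and using the multipole decay of $q^j_\ee-1$), one gets on $(T^j_{a_\ee})^+\setminus\overline{T^j_{a_\ee l_0/2}}$ the bounds $0\le 1-q^j_\ee\le K/L$ and $|\nabla q^j_\ee|\le K/(La_\ee)$. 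I expect this second ingredient to be the main obstacle, since $q^j_\ee$ is neither radial nor explicit.

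Granting these, the annular term is handled as follows: $v^j:=w^j_\ee-q^j_\ee$ is harmonic on $(T^j_{\ee/4})^+\setminus\overline{T^j_{a_\ee}}$, vanishes on $(\pa T^j_{\ee/4})^+$, satisfies the Neumann condition on the flat part, and equals $1-q^j_\ee$ on $(\pa T^j_{a_\ee})^+$; testing with $v^j$ kills the outer and flat boundary terms and leaves
\begin{equation*}
\int_{(T^j_{\ee/4})^+\setminus\overline{T^j_{a_\ee}}}|\nabla v^j|^2\,dx=\int_{(\pa T^j_{a_\ee})^+}(1-q^j_\ee)\,\pa_\nu v^j\,ds\le \frac{K}{L}\cdot\frac{K}{La_\ee}\cdot\pi a_\ee\le\frac{K}{L^2}.
\end{equation*}
For the small-disc term I would split the total energy of $q^j_\ee$: $\int_{(T^j_{\ee/4})^+}|\nabla q^j_\ee|^2$ is the (two-dimensional, hence scale-invariant) capacity of the slit $l^j_\ee$ in $T^j_{\ee/4}$, equal to $\pi/(L+\log(2/l_0))=\pi/L+O(L^{-2})$, while the annular part equals $\pi/L+O(L^{-2})$ too by the same comparison, so $\int_{(T^j_{a_\ee})^+}|\nabla q^j_\ee|^2\le K/L^2$. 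Summing over $\Upsilon_\ee$ with $|\Upsilon_\ee|\le K\ee^{-1}$ and $L\ge c\alpha^2\ee^{-1}$ gives $\Vert\nabla(W_\ee-Q_\ee)\Vert^2_{L_2(\Omega)}\le K|\Upsilon_\ee|L^{-2}\le K\ee\alpha^{-4}$. Finally $W_\ee-Q_\ee$ vanishes on $(\pa T^j_{\ee/4})^+$ and outside the cells, so Friedrichs' inequality in each half-disc of diameter $\ee/2$ yields $\Vert W_\ee-Q_\ee\Vert^2_{L_2(\Omega)}\le K\ee^2\Vert\nabla(W_\ee-Q_\ee)\Vert^2_{L_2(\Omega)}\le K\ee^3$; adding the two contributions gives $\Vert W_\ee-Q_\ee\Vert^2_{H^1(\Omega)}\le K\ee$, which is \eqref{W-Q estimation n=2}.

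As noted, the delicate point is the quantitative replacement of the slit $l^j_\ee$ by the enclosing disc $T^j_{a_\ee}$: one must control $1-q^j_\ee$ both in sup-norm and in normal derivative on $\pa T^j_{a_\ee}$, and it is precisely here that the peculiar scale $a_\ee=C_0\ee e^{-\alpha^2/\ee}$ and the equality of logarithmic capacities of a segment and a disc enter — the same mechanism producing the factor $\pi/\alpha^2$ in Theorem~\ref{thm:2}. Everything else is routine energy bookkeeping.
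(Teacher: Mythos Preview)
Your argument is correct in outline and yields the same $O(\ee)$ bound on the Dirichlet energy per cell, but it takes a harder road than the paper. You split each cell into the inner half-disc $(T^j_{a_\ee})^+$ and the half-annulus, and then need pointwise control of $1-q^j_\ee$ and $\nabla q^j_\ee$ on the intermediate circle $(\partial T^j_{a_\ee})^+$, together with a capacity identity for the slit via the Joukowski map. These estimates are true, but---as you yourself flag---establishing them is the real work in your scheme.

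The paper avoids all of this with a single variational observation: since $W_\ee=Q_\ee=1$ on $l^j_\ee$, the difference $w^j_\ee-q^j_\ee$ (with $w^j_\ee$ extended by $1$ on $(T^j_{a_\ee})^+$) is an admissible test function in the weak formulation of \eqref{prob q var2 n=2}, giving the orthogonality
\[
\int_{(T^j_{\ee/4})^+}\nabla q^j_\ee\cdot\nabla(w^j_\ee-q^j_\ee)\,dx=0.
\]
Subtracting this from $\int\nabla w^j_\ee\cdot\nabla(w^j_\ee-q^j_\ee)$ (computed by Green's formula, which produces only the explicit $\partial_\nu w^j_\ee=-(a_\ee\ln(4a_\ee/\ee))^{-1}$ on $(\partial T^j_{a_\ee})^+$) yields the full energy $\|\nabla(w^j_\ee-q^j_\ee)\|^2_{L^2((T^j_{\ee/4})^+)}$ on the whole cell at once---no splitting, no information about $q^j_\ee$ beyond its definition. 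After rescaling by $a_\ee$ (Dirichlet energy is scale-invariant in $n=2$) and using a fixed trace--Poincar\'e inequality on $(T^0_1)^+$ for functions vanishing on $\hat l_0$, one gets $\|\nabla(w^j_\ee-q^j_\ee)\|_{L^2}\le K/L\le K\ee$ directly.

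So both routes work; yours buys finer pointwise information about $q^j_\ee$ (and makes the role of the slit's logarithmic capacity transparent), while the paper's buys simplicity---the only quantitative input is the explicit radial $w^j_\ee$ and a universal trace constant.
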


\begin{proof}
Note that for an arbitrary function $ \psi\in H^{1}(T^{j}_{{\ee}/4})$ such that $\psi=0$ on $l^{j}_{\ee}$ we have
\begin{equation}\label{W-Q estim int ident Q}
\int\limits_{(T^{j}_{{\ee}/4})^{+}}\nabla{q^{j}_{\ee}}\nabla\psi{dx_{1}dx_{2}}=0.
\end{equation}
We consider $\psi=w_\ee^{j}-q^{j}_{\ee}$ as a test function in the obtained equality and get
\begin{equation}\label{W-Q estim int ident Q test func}
\int\limits_{(T^{j}_{{\ee}/4})^{+}}\nabla{q^{j}_{\ee}}\nabla(w^{j}_{\ee}-q^{j}_{\ee})dx_{1}dx_{2}=0.
\end{equation}
In addition, we have
\begin{equation}\label{W-Q estim ident W}
\int\limits_{(T^{j}_{{\ee}/4})^{+}}\nabla{w^{j}_{\ee}}\nabla(w^{j}_{\ee}-q^{j}_{\ee})dx_{1}dx_{2}=
\int\limits_{\pa{T^{j}_{a_{\ee}}\cap\{x_{2}>0\}}}\pa_{\nu}w^{j}_{\ee}(w^{j}_{\ee}-q^{j}_{\ee})ds.
\end{equation}
By subtracting \eqref{W-Q estim int ident Q test func} from \eqref{W-Q estim ident W} we derive
\begin{equation}\label{W-Q estim diff}
\int\limits_{(T^{j}_{{\ee}/4})^{+}}|\nabla(w^{j}_{\ee}-q^{j}_{\ee})|^{2}dx=\int\limits_{\pa{T^{j}_{a_{\ee}}}\cap\{x_{2}>0\}}\pa_{\nu}w^{j}_{\ee}(w^{j}_{\ee}-q^{j}_{\ee})ds.
\end{equation}
Note that $w^{j}_{\ee}(x)=\frac{\ln(4r/{\ee})}{\ln(4a_{\ee}/{\ee})}$ and $\pa_{\nu}w^{j}_{\ee}=-\frac{1}{a_{\ee}\ln(4a_{\ee}/{\ee})}$. Hence, \eqref{W-Q estim diff} implies that
\begin{align*}
\Vert\nabla(w^{j}_{\ee}-q^{j}_{\ee})\Vert^{2}_{L^{2}((T^{j}_{{\ee}/4})^{+})} &\le \frac{1}{a_{\ee}|\ln(4a_{\ee}/{\ee})|}\int\limits_{\pa{T^{j}_{a_{\ee}}}\cap\{x_{2}>0\}}|w^{j}_{\ee}-q^{j}_{\ee}|ds\\
&=\frac{1}{|\ln(4a_{\ee}/{\ee})}|\int\limits_{\pa{T^{j}_{1}\cap\{y_{2}>0\}}}|w^{j}_{\ee}-q^{j}_{\ee}|ds_{y}\equiv J_{\ee}.
\end{align*}
Given that $w^{j}_{\ee}-q^{j}_{\ee}=0$ if $y\in \hat{l}_{0}$ and using the embedding theorem, we get
\begin{equation}\label{Q-W estim pre-final estim}
J_{\ee}\le \frac{K}{|\ln(4a_{\ee}/{\ee})|}\Bigl(\int\limits_{(T^{j}_{1})^{+}}|\nabla_{y}(w^{j}_{\ee}-q^{j}_{\ee})|^{2}dy\Bigr)^{1/2}\le K{\ee}\Vert\nabla(w^{j}_{\ee}-q^{j}_{\ee})\Vert_{L^{2}((T^{j}_{{\ee}/4})^{+})}.
\end{equation}
From here we derive the estimate
\begin{equation*}
\Vert\nabla(w^{j}_{\ee}-q^{j}_{\ee})\Vert_{L^{2}((T^{j}_{{\ee}/4})^{+})}\le K{\ee}.
\end{equation*}
From this estimation it follows that
\begin{equation}\label{Q-W estim final estim}
\Vert{W_{\ee}-Q_{\ee}}\Vert_{H^{1}(\Omega)}\le K\sqrt{\ee}.
\end{equation}
This concludes the proof. \end{proof}
We introduce function $m(y)\in H^{1}((T^{0}_{1})^{+})$ as the weak solution to
the following boundary value problem
\begin{equation}  \label{prob m n=2}
\left\{%
\begin{array}{lr}
\Delta_{y}m=0, & y\in T^{0}_{1}\cap\{y_{2}>0\}=(T^{0}_{1})^{+}, \\
\partial_{y_{2}}m=1, & y\in \hat{l}_{0}, \\
\partial_{\nu}m=\frac{2l_{0}}{\pi}, & y\in \partial{T^{0}_{1}}%
\cap\{y_{2}>0\}=(\partial{T^{0}_{1}})^{+}, \\
\partial_{y_{2}}m=0, & y\in \partial(T^{0}_{1})^{+}\setminus \hat{l}_{0}\cup
(\partial{T^{0}_{1}})^{+}.%
\end{array}%
\right.
\end{equation}
Consider
\begin{equation}  \label{m cell n=2}
m^{j}_{\varepsilon}(x)={\varepsilon}m(\frac{x-P^{j}_{\varepsilon}}{%
a_{\varepsilon}}), \,\,\, x\in (T^{j}_{a_{\varepsilon}})^{+}.
\end{equation}
The function $m^{j}_{\varepsilon}(x)$ verifies the problem
\begin{equation}  \label{m cell problem}
\left\{%
\begin{array}{lr}
\Delta_{x}m^{j}_{\varepsilon}=0, & x\in (T^{j}_{a_{\varepsilon}})^{+}, \\
\partial_{\nu}m^{j}_{\varepsilon}=\frac{{\varepsilon}a_{%
\varepsilon}^{-1}2l_{0}}{\pi}, & x\in (\partial{T^{j}_{a_{\varepsilon}}}%
)^{+}, \\
\partial_{x_{2}}m^{j}_{\varepsilon}={\varepsilon}a_{\varepsilon}^{-1}, &
x\in \{x_{2}=0, |x-P^{j}_{\varepsilon}|\le
a_{\varepsilon}l_{0}\}=l^{j}_{\varepsilon}, \\
\partial_{x_{2}}m^{j}_{\varepsilon}=0, & \mbox{on the rest of the boundary}.%
\end{array}%
\right.
\end{equation}

\begin{lemma}
\label{lemma m comparison} Let $n = 2$ and $h \in H^1(\Omega, \Gamma_2)$
then the following estimate holds:
\begin{equation}
\Bigl|\frac{2 l_0 \varepsilon}{\pi a_\varepsilon}\sum\limits_{j\in\Upsilon_%
\varepsilon}\int\limits_{(\partial{T^{j}_{a_{\varepsilon}}})^{+}}h ds -
\frac{\varepsilon}{a_\varepsilon}\int\limits_{l_{\varepsilon}}h dx_{1} %
\Bigr| \le K\sqrt{\varepsilon}.
\end{equation}
\end{lemma}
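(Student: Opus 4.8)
The plan is to test the weak formulation of problem~\eqref{m cell problem} against $h$ itself: the function $m^{j}_{\varepsilon}$ was tailored precisely so that the boundary term it produces is the combination appearing in the statement. First I would record that, since the outward unit normal to $(T^{j}_{a_{\varepsilon}})^{+}$ along the flat part of its boundary is $-e_{2}$, the conormal data of $m^{j}_{\varepsilon}$ is $\partial_{\nu}m^{j}_{\varepsilon}=-\varepsilon a_{\varepsilon}^{-1}$ on $l^{j}_{\varepsilon}$, $\partial_{\nu}m^{j}_{\varepsilon}=0$ on the remaining flat piece, and $\partial_{\nu}m^{j}_{\varepsilon}=2l_{0}\varepsilon(\pi a_{\varepsilon})^{-1}$ on $(\partial T^{j}_{a_{\varepsilon}})^{+}$. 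The solvability condition $2l_{0}\varepsilon(\pi a_{\varepsilon})^{-1}\,|(\partial T^{j}_{a_{\varepsilon}})^{+}|=\varepsilon a_{\varepsilon}^{-1}\,|l^{j}_{\varepsilon}|$ then holds, which is exactly what makes $m$ (hence $m^{j}_{\varepsilon}$, up to an additive constant) well defined. Green's formula for the harmonic function $m^{j}_{\varepsilon}$ therefore gives, for $j\in\Upsilon_{\varepsilon}$ and any $\varphi\in H^{1}((T^{j}_{a_{\varepsilon}})^{+})$,
\[
\int_{(T^{j}_{a_{\varepsilon}})^{+}}\nabla m^{j}_{\varepsilon}\cdot\nabla\varphi\,dx=\frac{2l_{0}\varepsilon}{\pi a_{\varepsilon}}\int_{(\partial T^{j}_{a_{\varepsilon}})^{+}}\varphi\,ds-\frac{\varepsilon}{a_{\varepsilon}}\int_{l^{j}_{\varepsilon}}\varphi\,dx_{1}.
\]
Because $a_{\varepsilon}\ll\varepsilon$ and $\Omega$ is flat near the interior of $\Gamma_{1}$, one has $(T^{j}_{a_{\varepsilon}})^{+}\subset(T^{j}_{\varepsilon/4})^{+}\subset\Omega$ for $j\in\Upsilon_{\varepsilon}$ and $\varepsilon$ small, so I may legitimately take $\varphi=h$.

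Next I would sum this identity over $j\in\Upsilon_{\varepsilon}$, apply the Cauchy--Schwarz inequality, and use that the half-balls $(T^{j}_{a_{\varepsilon}})^{+}$ are pairwise disjoint subsets of $\Omega$, obtaining
\[
\Bigl|\frac{2l_{0}\varepsilon}{\pi a_{\varepsilon}}\sum_{j\in\Upsilon_{\varepsilon}}\int_{(\partial T^{j}_{a_{\varepsilon}})^{+}}h\,ds-\frac{\varepsilon}{a_{\varepsilon}}\sum_{j\in\Upsilon_{\varepsilon}}\int_{l^{j}_{\varepsilon}}h\,dx_{1}\Bigr|\le\Bigl(\sum_{j\in\Upsilon_{\varepsilon}}\|\nabla m^{j}_{\varepsilon}\|_{L^{2}((T^{j}_{a_{\varepsilon}})^{+})}^{2}\Bigr)^{1/2}\|\nabla h\|_{L^{2}(\Omega)}.
\]
The remaining task is to bound $\sum_{j}\|\nabla m^{j}_{\varepsilon}\|_{L^{2}((T^{j}_{a_{\varepsilon}})^{+})}^{2}$. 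Using the change of variables $y=(x-P^{j}_{\varepsilon})/a_{\varepsilon}$ in $m^{j}_{\varepsilon}(x)=\varepsilon\,m((x-P^{j}_{\varepsilon})/a_{\varepsilon})$, the factors $a_{\varepsilon}$ cancel in two dimensions (since $\nabla_{x}m^{j}_{\varepsilon}=(\varepsilon/a_{\varepsilon})\nabla_{y}m$ and $dx=a_{\varepsilon}^{2}\,dy$), so that $\|\nabla m^{j}_{\varepsilon}\|_{L^{2}((T^{j}_{a_{\varepsilon}})^{+})}^{2}=\varepsilon^{2}\|\nabla m\|_{L^{2}((T^{0}_{1})^{+})}^{2}$, a quantity independent of $j$ and finite (a fixed constant depending only on $l_{0}$, as $m$ solves a well-posed Neumann problem on the fixed Lipschitz half-disk). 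Since $|\Upsilon_{\varepsilon}|\cong d\varepsilon^{-1}$, this yields $\sum_{j}\|\nabla m^{j}_{\varepsilon}\|_{L^{2}((T^{j}_{a_{\varepsilon}})^{+})}^{2}\le K\varepsilon$; substituting back gives the asserted bound, with $K$ absorbing $\|\nabla h\|_{L^{2}(\Omega)}$ (equivalently, for a fixed $h$, the right-hand side is $O(\sqrt{\varepsilon})$).

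I do not expect a genuine obstacle here: the whole difficulty is already encoded in the choice of the Neumann data for $m$ in~\eqref{prob m n=2}, which is precisely what simultaneously enforces the compatibility condition and forces the boundary term of the weak formulation to coincide exactly with the expression in the statement. The only points needing a brief verification are the sign of the conormal derivative on the flat boundary piece of $(T^{j}_{a_{\varepsilon}})^{+}$ and the inclusion $(T^{j}_{a_{\varepsilon}})^{+}\subset\Omega$ for $j\in\Upsilon_{\varepsilon}$; both are routine given $a_{\varepsilon}\ll\varepsilon$ and the flatness of $\partial\Omega$ near the interior of $\Gamma_{1}$.
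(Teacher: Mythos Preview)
Your proposal is correct and follows essentially the same route as the paper: test the weak formulation of \eqref{m cell problem} against $h$, identify the resulting boundary term with the expression in the statement, compute $\|\nabla m^{j}_{\varepsilon}\|_{L^{2}((T^{j}_{a_{\varepsilon}})^{+})}^{2}=\varepsilon^{2}\|\nabla m\|_{L^{2}((T^{0}_{1})^{+})}^{2}$ via the scaling, and sum over $|\Upsilon_{\varepsilon}|\cong d\varepsilon^{-1}$ cells. The only cosmetic difference is that you use Cauchy--Schwarz on the sum (exploiting disjointness of the half-disks) where the paper uses Young's inequality with parameter $\delta=\sqrt{\varepsilon}$; both yield the same $K\sqrt{\varepsilon}$ bound with $K$ depending on $\|\nabla h\|_{L^{2}(\Omega)}$.
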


\begin{proof}
Denote $h_{\ee}=H(\psi)(\psi-H(\psi)-u_{\ee})$. Then
\begin{equation}\label{lemma m conv int}
\begin{gathered}
\Bigl|\frac{2l_{0}{\ee}a_{\ee}^{-1}}{\pi}\int\limits_{(\pa{T^{j}_{a_{\ee}}})^{+}}h_{\ee}ds-{\ee}a_{\ee}^{-1}\int\limits_{l^{j}_{\ee}}h_{\ee}dx_{1}\Bigr|=\\
=\Bigl|\int\limits_{(T^{j}_{a_{\ee}})^{+}}\nabla_{x}m^{j}_{\ee}\nabla{h_{\ee}}dx\Bigr|\le\Vert\nabla_{x}m^{j}_{\ee}\Vert_{L^{2}((T^{j}_{a_{\ee}})^{+})}\Vert\nabla{h_{\ee}}\Vert_{L^{2}((T^{j}_{a_{\ee}})^{+})}.\\
\end{gathered}
\end{equation}
Due to the fact that
\begin{align*}
\Vert\nabla_{x}m^{j}_{\ee}\Vert^{2}_{L^{2}((T^{j}_{a_{\ee}})^{+})}&={\ee}^{2}\Vert\nabla_{y}m(y)\Vert^{2}_{L^{2}((T^{0}_{1})^{+})}\le K{\ee}^{2},
\end{align*}
we have
\begin{equation}\label{m estim n=2}
\sum\limits_{j\in \Upsilon_{\ee}}\Vert\nabla_{x}m^{j}_{\ee}\Vert^{2}_{L^{2}((T^{j}_{a_{\ee}})^{+})}\le K{\ee}.
\end{equation}
From \eqref{lemma m conv int}, \eqref{m estim n=2} we derive
\begin{equation}\label{m int estim}
\begin{gathered}
\Bigl|e^{\alpha^{2}/{\ee}}\frac{\pi}{2l_{0}}\int\limits_{l_{\ee}}h_{\ee}dx_{1}-e^{\alpha^{2}/{\ee}}\sum\limits_{j\in \Upsilon_{\ee}}\int\limits_{\pa{T^{j}_{a_{\ee}}}\cap\{x_{2}>0\}}h_{\ee}ds\Bigr|\le\\
\le \delta^{-1}\sum\limits_{j\in \Upsilon_{\ee}}\Vert{\nabla_{x}m^{j}_{\ee}}\Vert^{2}_{L^{2}((T^{j}_{a_{\ee}})^{+})}+\delta
\Vert\nabla{h_{\ee}}\Vert^{2}_{L^{2}(\Omega)}\le K\sqrt{\ee},
\end{gathered}
\end{equation}
if $\delta=\sqrt{\ee}$.
\end{proof}

\begin{proof}[Proof of Theorem \ref{thm:2}]
First of all equation \eqref{func eq} has a unique solution $H(u)$ that is a Lipschitz continuous function in $\mathbb{R}$ and satisfies
\begin{equation}\label{H prop n=2}
\begin{gathered}
(H(u) - H(v))(u - v) \ge \tilde k_1 |u - v|,\\
|H(u)|\le |u|,
\end{gathered}
\end{equation}
for all $u,\, v\in\mathbb{R}$ and a certain constant $\tilde k_1 > 0$.

We take $v=\psi-Q_{\ee}(H(\psi^{+}) + \psi^{-})$ as a test function in \eqref{integral identity n=2}, where $\psi\in C^{\infty}(\overline\Omega)$, $\psi(x)=0$ in the neighborhood of $\Gamma_{2}$,
$H(u)$ satisfies the functional equation \eqref{func eq}. Note that from \eqref{prob q n=2}, \eqref{prob Q n=2} and \eqref{H prop n=2} we have $v\ge 0$ on $l_\ee$ so $v\in K_\ee$. Hence we get
\begin{equation}\label{hom theorem int ineq n=2}
\begin{gathered}
\int\limits_{\Omega}\nabla(\psi-Q_{\ee}(H(\psi^{+}) + \psi^{-}))\nabla(\psi-Q_{\ee}(H(\psi^{+}) + \psi^{-})-u_{\ee})dx+\\
+e^{\frac{\alpha^{2}}{\ee}}\int\limits_{l_{\ee}}\sigma(\psi^{+}-H(\psi^{+}))(\psi^{+}-H(\psi^{+})-u_{\ee})dx_{1}\ge\\
\ge\int\limits_{\Omega}f(\psi-Q_{\ee}(H(\psi^{+}) + \psi^{-})-u_{\ee})dx.
\end{gathered}
\end{equation}
We rewrite inequality \eqref{hom theorem int ineq n=2} in the following way
\begin{equation}\label{transform int ineq n=2}
\begin{gathered}
\int\limits_{\Omega}\nabla(\psi-W_{\ee}(H(\psi^{+}) + \psi^{-} ))\nabla(\psi-Q_{\ee}(H(\psi^{+}) + \psi^{-})-u_{\ee})dx-\\
-\int\limits_{\Omega}\nabla((Q_{\ee}-W_{\ee})(H(\psi^{+}) + \psi^{-}))\nabla(\psi-Q_{\ee}(H(\psi^{+}) + \psi^{-})-u_{\ee})dx+\\
+e^{\frac{\alpha^{2}}{\ee}}\int\limits_{l_{\ee}}\sigma(\psi^{+}-H(\psi^{+}))(\psi^{+}-H(\psi^{+})-u_{\ee})dx_{1}\ge\\
\ge \int\limits_{\Omega}f(\psi-Q_{\ee}(H(\psi^{+}) + \psi^{-})-u_{\ee})dx.
\end{gathered}
\end{equation}
From the fact, that $Q_{\ee}\rightharpoonup 0$ as ${\ee}\to 0$ weakly in $H^{1}(\Omega, \Gamma_{2})$, we have
\begin{equation}\label{f lim n=2}
\lim\limits_{{\ee}\to 0}\int\limits_{\Omega}f(\psi-Q_{\ee}(H(\psi^{+}) + \psi^{-})-u_{\ee})dx=\int\limits_{\Omega}f(\psi-u_{0})dx,
\end{equation}
\begin{equation}\label{psi lim n=2}
\lim\limits_{{\ee}\to 0}\nabla\psi\nabla(\psi-Q_{\ee}(H(\psi^{+}) + \psi^{-})-u_{\ee})dx=\int\limits_{{\ee}\to 0}\nabla\psi\nabla(\psi-u_{0})dx.
\end{equation}
Lemma~\ref{lemma estim w-q n=2} implies that
\begin{equation}\label{Q-W hom theorem estim n=2}
\lim\limits_{{\ee}\to 0}\int\limits_{\Omega}\nabla(Q_{\ee}-W_{\ee}(H(\psi^{+}) + \psi^{-}))\nabla(\psi-Q_{\ee}(H(\psi^{+}) + \psi^{-})-u_{\ee})dx=0.
\end{equation}
Consider the remaining integrals in \eqref{transform int ineq n=2}. Denote
\begin{align*}
I_{\ee}&\equiv -\int\limits_{\Omega}\nabla(W_{\ee}(H(\psi^{+}) + \psi^{-}))\nabla(\psi-Q_{\ee}(H(\psi^{+}) + \psi^{-})-u_{\ee})dx=\\
&=-\int\limits_{\Omega}\nabla{W_{\ee}}\nabla\{(H(\psi^{+}) + \psi^{-})(\psi-Q_{\ee}(H(\psi^{+}) + \psi^{-})-u_{\ee})\}dx+\alpha_{\ee},
\end{align*}
where $\alpha_{\ee}\to 0$ as ${\ee}\to 0$.
\\
It is easy to see that
\begin{align}
I_{\ee}&=-\int\limits_{\Omega}\nabla{W_{\ee}}\nabla\{(H(\psi^{+}) + \psi^{-})(\psi-Q_{\ee}(H(\psi^{+}) + \psi^{-})-u_{\ee})\}dx \nonumber\\
&=-\sum\limits_{j\in \Upsilon_{\ee}}\int\limits_{(T^{j}_{{\ee}/4})^{+}\setminus\overline{T^{j}_{a_{\ee}}}}\nabla{w^{j}_{\ee}}\nabla\{(H(\psi^{+}) + \psi^{-})(\psi-q^{j}_{\ee}(H(\psi^{+}) + \psi^{-})-u_{\ee})\}dx+\widetilde{\alpha_{\ee}}\nonumber\\
&=-\sum\limits_{j\in \Upsilon_{\ee}}\int\limits_{\pa{T^{j}_{{\ee}/4}}\cap\{x_{2}>0\}}\pa_{\nu}w^{j}_{\ee}(H(\psi^{+}) + \psi^{-})(\psi-u_{\ee})ds \nonumber\\
	\label{W int decomposition n=2}
&\qquad -\sum\limits_{j\in \Upsilon_{\ee}}\int\limits_{\pa{T^{j}_{a_{\ee}}}\cap\{x_{2}>0\}}\pa_{\nu}w^{j}_{\ee}(H(\psi^{+}) + \psi^{-})(\psi^{+}-H(\psi^{+})-u_{\ee})ds+\widetilde{\alpha_{\ee}},
\end{align}
where $\widetilde{\alpha_{\ee}}\to 0$, ${\ee}\to 0$.
\\
Since $\pa_{\nu}w^{j}_{\ee}\Bigl|_{\pa{T^{j}_{{\ee}/4}}}=\frac{4}{{\ee}\ln(4a_{\ee}/{\ee})}=\frac{4}{-\alpha^{2}+{\ee}\ln(4C_{0})}$,  using the results of \cite{LoOlPeSh}, we derive
\begin{align}
&-\lim\limits_{{\ee}\to 0}\sum\limits_{j\in \Upsilon_{\ee}}\int\limits_{\pa{T^{j}_{{\ee}/4}}\cap\{x_{2}>0\}}\pa_{\nu}w^{j}_{\ee}(H(\psi^{+}) + \psi^{-})(\psi-u_{\ee})ds\nonumber\\
&\qquad =\lim\limits_{{\ee}\to 0}\frac{4}{\alpha^{2}-{\ee}\ln(4C_{0})}\sum\limits_{j\in \Upsilon_{\ee}}\int\limits_{\pa{T^{j}_{{\ee}/4}}\cap\{x_{2}>0\}}(H(\psi^{+}) + \psi^{-})(\psi-u_{\ee})ds \nonumber \\
&\qquad=\frac{\pi}{\alpha^{2}}\int\limits_{\Gamma_{2}}(H(\psi^{+}) + \psi^{-})(\psi-u_{0})ds.
\label{lim ball n=2}
\end{align}
Let us find the limit of the expression
\begin{align}
&-\sum\limits_{j\in \Upsilon_{\ee}}\int\limits_{\pa{T^{j}_{a_{\ee}}}\cap\{x_{2}>0\}}\pa_{\nu}w^{j}_{\ee}(H(\psi^{+}) + \psi^{-})(\psi^{+}-H(\psi^{+})-u_{\ee})ds \nonumber\\
&\qquad \qquad  +e^{\frac{\alpha^{2}}{\ee}}\int\limits_{l_{\ee}}\sigma(\psi^{+}-H(\psi^{+}))(\psi^{+}-H(\psi^{+})-u_{\ee})dx_{1}\nonumber\\
&\qquad =\sum\limits_{j\in \Upsilon_{\ee}}\frac{(\alpha^{2}C_{0})^{-1}e^{\alpha^{2}/{\ee}}}{1-{\ee}\alpha^{-2}\ln(4C_{0})}\int\limits_{\pa{T^{j}_{a_{\ee}}}\cap\{x_{2}>0\}}(H(\psi^{+}) + \psi^{-})(\psi^{+}-H(\psi^{+})-u_{\ee})ds\nonumber\\
&\qquad \qquad +e^{\alpha^{2}/{\ee}}\int\limits_{l_{\ee}}\sigma(\psi^{+}-H(\psi^{+}))(\psi^{+}-H(\psi^{+})-u_{\ee})dx_{1}\nonumber\\
&\quad =e^{\alpha^{2}/{\ee}}\int\limits_{l_{\ee}}\sigma(\psi^{+}-H(\psi^{+}))(\psi^{+}-H(\psi^{+})-u_{\ee})dx_{1}\nonumber\\
&\qquad \qquad -\frac{e^{\alpha^{2}/{\ee}}}{\alpha^{2}C_{0}}\sum\limits_{j\in \Upsilon_{\ee}}\int\limits_{\pa{T^{j}_{a_{\ee}}}\cap\{x_{2}>0\}}(H(\psi^{+}) + \psi^{-})(\psi^{+}-H(\psi^{+})-u_{\ee})ds +\hat{\alpha}_{\ee} \nonumber\\
&\quad \equiv D_{\ee}+\hat{\alpha}_{\ee},
\label{expression limit n=2}
\end{align}
where $\hat{\alpha}_{\ee}\to 0$, ${\ee}\to 0$.

To conclude the proof we will estimate the limit of $D_\ee$. We have
\begin{equation}\label{D eps estim n=2}
\begin{gathered}
D_{\ee} = \left\{\frac{\pi}{2l_{0}\alpha^{2}C_{0}}e^{\alpha^{2}/{\ee}}\int\limits_{l_{\ee}}(H(\psi^{+}) + \psi^{-})(\psi^{+}-H(\psi)-u_{\ee})dx_{1}\right.-\\
-\left.\frac{e^{{\alpha^{2}}/{\ee}}}{\alpha^{2}C_{0}}\sum\limits_{j\in \Upsilon_{\ee}}\int\limits_{\pa{T^{j}_{a_{\ee}}}\cap\{x_{2}>0\}}(H(\psi^{+}) + \psi^{-})(\psi^{+}-H(\psi^{+})-u_{\ee})ds\right\}+\\
+e^{\alpha^{2}/{\ee}}\int\limits_{l_{\ee}}\{\sigma(\psi^{+}-H(\psi^{+}))-\frac
{\pi}{2l_{0}\alpha^{2}C_{0}}H(\psi^{+})\}(\psi^{+}-H(\psi^{+})-u_{\ee})dx_{1} - \\
- \frac{\pi}{2l_{0}\alpha^{2}C_{0}}e^{\alpha^{2}/{\ee}}\int\limits_{l_{\ee}}\psi^{-}(\psi^{+}-H(\psi)-u_{\ee})dx_{1} = \mathcal J^1_\ee + \mathcal J^2_\ee + \mathcal J^3_\ee
\end{gathered}
\end{equation}
Lemma~\ref{lemma m comparison} implies that
\begin{equation}\label{J1 lim n=2}
|\mathcal J^1_\ee| \le K\sqrt{\ee}.
\end{equation}
Then $\mathcal J^2_\ee$ vanishes due to equation~\eqref{func eq}. By using that $u_\ee\ge0$, $\psi^{-}\le 0$ on $l_\ee$ and the fact that $\psi^{-}(\psi^{+} - H(\psi^{+}))\equiv 0$ we have
\begin{equation}\label{J3 lim n=2}
\mathcal J^3_\ee \le 0.
\end{equation}
Hence, we have that $\lim\limits_{\ee\rightarrow0}D_\ee \le 0$ and
\begin{equation}\label{I lim estim n=2}
\begin{gathered}
\lim\limits_{\ee\rightarrow 0}\left(e^{\frac{\alpha^{2}}{\ee}}\int\limits_{l_{\ee}}\sigma(\psi^{+}-H(\psi^{+}))(\psi^{+}-H(\psi^{+})-u_{\ee})dx_{1} + I_\ee\right)\le\\
\le \frac{\pi}{\alpha^{2}}\int\limits_{\Gamma_{2}}(H(\psi^{+}) + \psi^{-})(\psi-u_{0})ds.
\end{gathered}
\end{equation}
Therefore, from \eqref{hom theorem int ineq n=2}-\eqref{I lim estim n=2} we conclude that $u_{0}\in H^{1}(\Omega, \Gamma_{2})$ satisfies the following inequality
\begin{equation}\label{last int ident n=2}
\int\limits_{\Omega}\nabla\psi\nabla(\psi-u_{0})dx+\frac{\pi}{\alpha^{2}}\int\limits_{\Gamma_{1}}(H(\psi^{+}) + \psi^{-})(\psi-u_{0})dx_{1}\ge
\int\limits_{\Omega}f(\psi-u_{0})dx,
\end{equation}
for any $\psi \in H^{1}(\Omega, \Gamma_{2})$, where $H(u)$ satisfies the functional equation \eqref{func eq}.
This concludes the proof.
\end{proof}

\section*{Acknowledgments}

The research of J.I. D\'{\i}az and D. G%
\'{o}mez-Castro was partially supported by the projects ref. MTM
2014-57113-P and MTM2017-85449-P of the DGISPI (Spain).


\end{document}